\title{Eventually Positive Semigroups of Linear Operators}
\author[1]{Daniel Daners}%
\author[2]{Jochen Gl\"uck\thanks{Supported by a scholarship within the
    scope of the LGFG Baden-W\"urttemberg, Germany.}}%
\author[3]{James B. Kennedy\thanks{Partly supported by a fellowship of
    the Alexander von Humboldt Foundation, Germany.}}%
\affil[1]{School of Mathematics and Statistics, University of Sydney,
  NSW 2006, Australia\authorcr%
  \nolinkurl{daniel.daners@sydney.edu.au}}%
\affil[2]{Institut f\"ur Angewandte Analysis, Universit\"at Ulm,
  D-89069 Ulm, Germany\authorcr%
  \nolinkurl{jochen.glueck@uni-ulm.de}}%
\affil[3]{Institut f\"ur Analysis, Dynamik und Modellierung,
  Universit\"at Stuttgart, Pfaffenwaldring~57, D-70659 Stuttgart,
  Germany\authorcr%
  \nolinkurl{james.kennedy@mathematik.uni-stuttgart.de}}%
\date{August 8, 2015}
\theoremstyle{plain}
\newtheorem{theorem}{Theorem}[section]
\newtheorem{lemma}[theorem]{Lemma}
\newtheorem{proposition}[theorem]{Proposition}
\newtheorem{corollary}[theorem]{Corollary}
\theoremstyle{definition}
\newtheorem{definition}[theorem]{Definition}
\newtheorem{example}[theorem]{Example}
\theoremstyle{remark}
\newtheorem{remark}[theorem]{Remark}
\numberwithin{equation}{section}
\numberwithin{figure}{section}
\DeclareMathOperator{\dist}{dist}
\DeclareMathOperator{\one}{\mathbf 1}
\DeclareMathOperator{\im}{im}
\DeclareMathOperator{\spr}{r}
\DeclareMathOperator{\spb}{s}
\DeclareMathOperator{\gbd}{\omega_0}
\DeclareMathOperator{\repart}{Re}
\DeclareMathOperator{\per}{per}
\DeclareMathOperator{\abs}{abs}
\newcommand{\bbN}{\mathbb{N}}
\newcommand{\bbZ}{\mathbb{Z}}
\newcommand{\bbR}{\mathbb{R}}
\newcommand{\bbC}{\mathbb{C}}
\newcommand{\bbT}{\mathbb{T}}
\newcommand{\calL}{\mathcal{L}}
\newcommand{\calT}{\mathcal{T}}
\newcommand{\phdot}{\mathord{\,\cdot\,}}
\let\oldthebibliography\thebibliography
\renewcommand\thebibliography[1]{
  \oldthebibliography{#1}
  \setlength{\parskip}{0pt}
  \setlength{\itemsep}{0pt plus 0.3ex}
  \small
}
\begin{document}
\maketitle

\begin{abstract}
  We develop a systematic theory of eventually positive semigroups of
  linear operators mainly on spaces of continuous functions. By
  eventually positive we mean that for every positive initial condition
  the solution to the corresponding Cauchy problem is positive for large
  enough time. Characterisations of such semigroups are given by means
  of resolvent properties of the generator and Perron--Frobenius type
  spectral conditions. We apply these characterisations to prove
  eventual positivity of several examples of semigroups including some
  generated by fourth order elliptic operators and a delay differential
  equation. We also consider eventually positive semigroups on arbitrary
  Banach lattices and establish several results for their spectral bound
  which were previously only known for positive semigroups.
\end{abstract}

{ 
  \renewcommand{\thefootnote}{} \footnotetext{Dedicated with great
    pleasure to Wolfgang Arendt on the occasion of his
    65\textsuperscript{th} birthday}%
  \footnotetext{\textbf{Mathematics Subject Classification (2010):}
    47D06 47B65 34G10}%
  \footnotetext{Published in J. Math. Anal. Appl. 433 (2016),
    1561--1593. DOI:
    \href{http://dx.doi.org/10.1016/j.jmaa.2015.08.050}{10.1016/j.jmaa.2015.08.050}}
}

\section{Introduction}
\label{sec:introduction}
One of the distinguishing features of many second-order parabolic
boundary value problems is their positivity preserving property: if the
initial condition is positive, so is the solution at all positive
times. Such equations are frequently expressed as an abstract Cauchy
problem of the form
\begin{equation}
  \label{eq:ivp}
  \dot u(t)=Au(t)\quad\text{if $t \geq 0$,}\qquad
  u(0)=u_0,
\end{equation}
on a (complex) Banach lattice $E$ such as $L^p(\Omega)$ or
$C(\overline\Omega)$, where $A$ is the generator of a strongly
continuous semigroup. If we represent the solution of \eqref{eq:ivp} in
terms of the corresponding semigroup $(e^{tA})_{t\geq 0}$, then
positivity means that $u_0\geq 0$ implies $e^{tA}u_0\geq 0$ for all
$t\geq 0$. There is a sophisticated general theory of positive
semigroups, which has found a large number of applications; see for
instance \cite{Arendt1986}.

However, if $A$ is the realisation of a \emph{differential} operator,
such positivity---which is usually obtained as a consequence of the
maximum principle---is surprisingly rare. Indeed, under mild auxiliary
assumptions on the operator $A$, \emph{a priori} of arbitrary order,
positivity of the semigroup $(e^{tA})_{t\geq 0}$ already implies that
$A$ is second-order elliptic if $E = L^p(\bbR^d)$ \cite{MiOk86} or
$E = C(\overline\Omega)$ \cite{ABR90}.

In such a case, in an attempt to bypass this restriction, we could
weaken the requirement on the semigroup and stipulate that $e^{tA}u_0$
merely be positive for $t\geq 0$ ``large enough'' whenever $u_0 \geq 0$.
Indeed, in recent times various disparate examples of such ``eventually
positive semigroups'' have emerged, all seemingly completely independent
of each other. Here are some examples, many of which we will consider in
more detail below, in Section~\ref{section_applications_on_c_k}.

A matrix exponential $e^{tA}$ can be positive for large $t$ even if $A$
has some negative off-diagonal entries, a phenomenon which seems to have
been observed only quite recently; see \cite{Noutsos2008} and the
references therein.

For elliptic operators of order $2m$, $m\geq 2$, there is no maximum
principle in general. The resolvent of the bi-Laplacian exhibits
positivity properties on very few domains such as balls and
perturbations of balls; see \cite{AcSw05,GrSw96}. The question as to
whether the corresponding parabolic problem becomes ``essentially''
positive for large $t>0$ was investigated in \cite{FGG08,GaGr08}.

Another example is the \emph{Dirichlet-to-Neumann operator} $D_\lambda$
associated with $\Delta u + \lambda u = 0$ on a domain $\Omega$. For
$\lambda$ on one side of the first Dirichlet eigenvalue, the semigroup
generated by $-D_\lambda$ on $L^2(\partial\Omega)$ is positive as shown
in \cite{arendt:12:fei}. For other values of $\lambda$ the semigroup may
be positive, eventually positive or neither as the example of the disc
shows; see \cite{Daners2014}. The present paper had its genesis in an
attempt to understand this phenomenon better at a theoretical level. We
provide a detailed discussion in
Section~\ref{section_applications_on_c_k}.

A further example is provided by certain delay differential
equations. Under special assumptions they generate positive semigroups;
see \cite[Theorem~VI.6.11]{Engel2000}. We will show in 
Section~\ref{subsection:appl-delay-equ}
that there are also situations where the semigroup is eventually 
positive without being positive

The variety of examples suggests that eventually positive semigroups
could prove more ubiquitous than their positive counterparts, and no
doubt more examples will emerge. Quite surprisingly, to date there 
seems to have been no unified treatment of such objects, in marked 
contrast to the positive case. Here, and in an envisaged sequel
\cite{DanersII}, we intend to address this. Our abstract theory will
allow us to recover several known results and to prove some new ones in
the above-mentioned areas.

Our main focus in this article is the investigation of strongly
continuous semigroups with eventual positivity properties on $C(K)$, the
space of complex-valued continuous functions on a compact non-empty
Hausdorff space $K$. In order to give an idea of our results, we first
need to introduce some notation. We call $f$ \emph{positive} if
$f(x)\geq 0$ for all $x\in K$ and write $f\geq 0$. If $f\geq 0$ but
$f\neq 0$ we write $f>0$; we call $f$ \emph{strongly positive} and write
$f\gg 0$ if there exists $\beta>0$ such that $f\geq\beta\one$, where
$\one$ is the constant function on $K$ with value one. A bounded linear
operator $T$ on $C(K)$ is called \emph{strongly positive}, denoted by $T
\gg 0$, if $Tf \gg 0$ whenever $f > 0$, and similarly, a linear
functional $\varphi: C(K) \to \bbC$ is called \emph{strongly positive},
again denoted by $\varphi \gg 0$, if $\varphi(f) > 0$ for each $f > 0$.

If $A\colon D(A)\to E$ is a closed operator with domain $D(A)$ on the
Banach space $E$ we denote by $\sigma(A)$ and
$\varrho(A):=\bbC\setminus\sigma(A)$ the \emph{spectrum} and
\emph{resolvent set} of $A$, respectively. Any point in $\sigma(A)$ is
called a \emph{spectral value}. We call
\begin{equation}
  \label{eq:spb}
  \spb(A):=\sup\bigl\{\repart\lambda\colon\lambda\in\sigma(A)\bigr\}
  \in [-\infty, \infty]
\end{equation}
the \emph{spectral bound} of $A$. For some classes of positive
semigroups the spectral bound $\spb(A)$ is necessarily a \emph{dominant
  spectral value}, by which we mean that $\spb(A) \in \sigma(A)$ and
that the \emph{peripheral spectrum}
\begin{equation}
  \label{eq:sigma-per}
  \sigma_{\per}(A):=\sigma(A)\cap(\spb(A)+i\bbR)
\end{equation}
consists of $\spb(A)$ only. As in the theory of positive semigroups
there is a close relationship between positivity properties of the
resolvent
\begin{equation}
  \label{eq:resolvent}
  \varrho(A)\to\mathcal L(E),\quad
  \lambda\mapsto R(\lambda,A):=(\lambda I -A)^{-1}
\end{equation}
and the semigroup $(e^{tA})_{t\geq 0}$; we shall also see that the
spectral projection $P$ associated with $\spb(A)$ plays an important
role here in the case that $\spb(A)$ is an isolated spectral value.  The
main part of this paper is, roughly speaking, devoted to characterising
the relationship between these three objects. The following theorem
provides a rather incomplete but indicative snapshot of our results.
\begin{theorem}
  \label{thm:snapshot}
  Let $A$ be the generator of a strongly continuous real semigroup
  $(e^{tA})_{t \ge 0}$ on $C(K)$ with spectral bound
  $\spb(A)>-\infty$. Assume that $\sigma_{\per}(A)$ is finite and
  consists of poles of the resolvent.  Then the following assertions are
  equivalent:
  \begin{enumerate}[\upshape (i)]
  \item For every $f>0$ there exists $t_0>0$ such that $e^{tA}f\gg 0$
    for all $t\geq t_0$;
  \item The semigroup $(e^{t(A-\spb(A) I)})_{t\geq 0}$ is bounded,
    $\spb(A)$ is a dominant spectral value and for every $f>0$ there
    exists $\lambda_0>\spb(A)$ such that $R(\lambda,A)f\gg 0$ for all
    $\lambda\in(\spb(A),\lambda_0]$;
  \item The semigroup $(e^{t(A-\spb(A) I)})_{t\geq 0}$ is bounded,
    $\spb(A)$ is a dominant spectral value and a geometrically simple
    eigenvalue; moreover, $\ker(\spb(A) I - A)$ and $\ker(\spb(A) I -
    A')$ each contain a strongly positive vector.
  \item The semigroup $(e^{t(A-\spb(A) I)})_{t\geq 0}$ is bounded,
    $\spb(A)$ is a dominant spectral value and the associated spectral
    projection $P$ fulfils $P\gg 0$.
  \end{enumerate}
\end{theorem}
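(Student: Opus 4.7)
I would prove the equivalence by cycling through the four conditions with (iv) as the pivot: the equivalence (iii)$\Leftrightarrow$(iv) and the implications (iv)$\Rightarrow$(i), (iv)$\Rightarrow$(ii) are the short, structural directions, while (i)$\Rightarrow$(iv) and (ii)$\Rightarrow$(iv) carry the analytic weight. Write $s := \spb(A)$, $S(t) := e^{t(A - sI)}$, and let $P$ be the spectral projection associated with $\sigma_{\per}(A)$, which is well defined since this set is assumed finite and to consist of poles of $R(\,\cdot\,,A)$. Throughout, the two bridges between semigroup and resolvent are the Laplace identity $R(\lambda,A)f = \int_0^\infty e^{-\lambda t}e^{tA}f\,dt$ valid for $\repart\lambda > \gbd(A)$, and the Laurent expansion of $R(\,\cdot\,,A)$ at $s$.

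\paragraph{Short implications.} Boundedness of $S(\,\cdot\,)$ forces the pole at $s$ to be of first order (otherwise $S(t)P$ would be a nontrivial polynomial in $t$), so algebraic and geometric multiplicity of $s$ coincide; together with dominance and simplicity, $P$ then has the rank-one form $Pf = \varphi(f)u$ with $u$ spanning $\ker(sI-A)$ and $\varphi$ spanning $\ker(sI-A')$, and $P \gg 0$ becomes equivalent to $u\gg 0$ and $\varphi\gg 0$, giving (iii)$\Leftrightarrow$(iv). For (iv)$\Rightarrow$(i), decompose $e^{tA}f = e^{ts}[Pf + S(t)(I-P)f]$; since $\sigma(A|_{\ker P})$ lies strictly to the left of $s$, a standard spectral decomposition argument tailored to the isolated pole structure gives $\|S(t)(I-P)f\| \le Ce^{-\delta t}\|f\|$ for some $\delta>0$. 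Combined with $Pf \ge \beta\one$ and the bound $-\|g\|\one \le g \le \|g\|\one$ available in $C(K)$ for real $g$, this yields $e^{tA}f \ge e^{ts}(\beta - Ce^{-\delta t}\|f\|)\one \gg 0$ for large $t$. For (iv)$\Rightarrow$(ii), the Laurent expansion $R(\lambda,A) = (\lambda-s)^{-1}P + R_0(\lambda)$ with $R_0$ holomorphic at $s$ gives $R(\lambda,A)f = (\lambda-s)^{-1}Pf + O(1)$, so the singular term with $Pf \gg 0$ dominates and $R(\lambda,A)f \gg 0$ on some interval $(s,s+\varepsilon]$.

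\paragraph{The hard direction (i)$\Rightarrow$(iv).} The task is to extract the full spectral-structural information from individual eventual strong positivity, which I would accomplish in three substeps. First, upgrade (i) to boundedness of $S(\,\cdot\,)$ and the equality $\spb(A) = \gbd(A)$: a uniform-boundedness or Baire-category argument converts pointwise eventual strong positivity into sufficient control that the operator $T := e^{t_0 A}$ (for some $t_0$ large enough) is itself strongly positive; a Krein--Rutman argument on $C(K)$ then furnishes a strongly positive eigenvector at the spectral radius of $T$, and a spectral mapping argument for isolated eigenvalues identifies its eigenvalue as $e^{t_0 s}$ with strongly positive eigenvector of $A$ at $s$, yielding the required growth bound. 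Second, rule out any further peripheral spectrum: a hypothetical value $s+i\tau$ with $\tau\neq 0$ would contribute a persistently oscillatory term $e^{i\tau t}e^{ts}Qf$ to the orbit $e^{tA}f$; pairing against the strongly positive eigenfunctional of $A'$ at $s$, whose existence follows from substep one applied to the dual, one observes a sign change in the resulting scalar function of $t$, contradicting $e^{tA}f\gg 0$ for $t\ge t_0$. Third, upgrade $Pf\ge 0$ to $Pf\gg 0$: with the first two substeps in place, $Pf = \lim_{\lambda\downarrow s}(\lambda-s)R(\lambda,A)f = \lim_{t\to\infty}e^{-ts}e^{tA}f$, and combining the uniform lower bound $e^{tA}f\ge\beta_t\one$ from (i) with a strong-positivity lemma specific to $C(K)$ (separation of $Pf$ from the boundary of the positive cone via the strongly positive eigenvector) yields $Pf\gg 0$, which by (iii)$\Leftrightarrow$(iv) is equivalent to (iv).

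\paragraph{Main obstacle.} The genuine difficulty is substep three: the lower bound $\beta_t$ on $e^{tA}f$ need not behave well under the rescaling by $e^{-ts}$, so one cannot simply pass to the limit and must argue carefully that $Pf$ lies strongly inside the positive cone rather than on its boundary. This Perron--Frobenius upgrade is the analytic heart of the theorem and explains why $C(K)$, where strong positivity is strictly stronger than positivity, is the natural setting. The remaining implication (ii)$\Rightarrow$(iv) follows the same template: $(\lambda-s)R(\lambda,A)f\to Pf$ immediately yields $Pf\ge 0$, and the hypothesis $R(\lambda,A)f\gg 0$ on $(s,\lambda_0]$ combined with the same upgrade delivers $Pf\gg 0$, completing the cycle.
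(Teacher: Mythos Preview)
Your overall architecture is sound, and the ``short implications'' are essentially correct (with one caveat below). The fatal gap is in substep one of (i)$\Rightarrow$(iv): you claim that a Baire or uniform-boundedness argument upgrades individual eventual strong positivity to the existence of a single $t_0$ with $e^{t_0 A}\gg 0$. This is false. The paper constructs explicit counterexamples (Examples~\ref{ex_ind_evtl_pos_does_not_imply_unif_evtl_pos} and~\ref{ex_ind_evtl_pos_does_not_imply_unif_evtl_pos_for_compact_res}) of real $C_0$-semigroups on $C(K)$ that are individually eventually strongly positive but for which no such $t_0$ exists; in the second example the generator even has compact resolvent and the semigroup is analytic. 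So the Krein--Rutman step that follows has no positive operator to act on, and the whole substep collapses.

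The paper's route around this obstacle is genuinely different from classical Perron--Frobenius. It first proves, via Laplace-transform arguments valid under merely individual eventual positivity (Proposition~\ref{prop:laplace-transform}, Theorems~\ref{thm:spb-in-spectrum} and~\ref{thm:boundary-spectrum}), that $\spb(A)\in\sigma(A)$ and that the leading Laurent coefficient $U^{m-1}$ at $\spb(A)$ is positive, hence $\spb(A)$ has a positive eigenvector. Boundedness of $(S(t))$ then comes from a tailored lemma (Lemma~\ref{lem_evtl_pos_family_of_op_eigenvectors}(i)) about operator families with individual eventual strong positivity and a positive fixed vector---no uniformity is needed. For substep two, rather than an oscillation argument, the paper uses a torus-recurrence result (Proposition~\ref{prop_convergence_on_rangel_of_per_spec_proj}) to show that $P_{\per}$ is positive and that the restricted semigroup on $\im P_{\per}$ is genuinely positive; cyclicity of the peripheral spectrum for positive semigroups then forces $\sigma_{\per}(A)=\{\spb(A)\}$. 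Substep three is handled by part (ii) of the same lemma, which cleanly upgrades $P>0$ to $P\gg 0$.

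A smaller issue: in your (iv)$\Rightarrow$(i) you assert operator-norm exponential decay $\|S(t)(I-P)f\|\le Ce^{-\delta t}\|f\|$ on $\ker P$. On a general $C(K)$ the spectral bound and growth bound of $A|_{\ker P}$ need not coincide, so this is unjustified without an extra hypothesis such as eventual norm continuity (cf.\ Corollary~\ref{cor:evpos-semigroups-c_k}). The paper instead invokes an ABLV-type theorem to get \emph{strong} convergence $S(t)(I-P)f\to 0$, which suffices since the interior of $E_+$ is open in $E_{\bbR}$.
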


Let us briefly comment on the role of the different assertions in the
theorem: The implication ``(i) $\Rightarrow$ (iii)'' is a typical
Perron--Frobenius type result, since it infers the existence of a
dominant spectral value and corresponding positive eigenvectors from
positivity properties of the semigroup.  The converse implication
``(iii) $\Rightarrow$ (i)'' has no analogue for positive semigroups and
is characteristic for \emph{eventual} positivity.  The close
relationship between properties of the semigroup in (i) and the
resolvent in (ii) is reminiscent of the theory of positive semigroups,
where the semigroup is positive if and only if the resolvent is positive
for all sufficiently large $\lambda$; see
\cite[Theorem~VI.1.8]{Engel2000}. Here, however, it turns out that we
need to consider small $\lambda$ and an additional spectral condition on
the generator $A$. To consider the spectral projection as in (iv) does
not seem to be common in classical Perron--Frobenius theory; however, we
shall see that it is essential for relating the other conditions to each
other.

It is our intention not just to prove a blanket result like
Theorem~\ref{thm:snapshot}, but to give a more detailed analysis of each
of the different objects considered in the theorem, namely the spectral
projection, the resolvent and the semigroup.  This is done in
Sections~\ref{sec:projections}--\ref{sec:semigroups}.

Section~\ref{sec:projections} is concerned with the characterisation of
eigenvalues $\lambda_0$ for which $A$ and its dual $A'$ have a strongly
positive eigenvector and for which the corresponding eigenspace of $A$
is one-dimensional. We show in Proposition~\ref{prop:projections} that
this is equivalent to the spectral projection $P$ associated with
$\lambda_0$ being strongly positive.

In Section~\ref{sec:resolvents} we characterise strongly positive
projections by means of eventual positivity properties of the resolvent.
Our main result on resolvents is given in
Theorem~\ref{thm:evpos-resolvents-c_k}.

Under some additional assumptions on the peripheral spectrum of $A$ we
characterise individually eventually strongly positive semigroups in
terms of resolvent and spectral projection in
Section~\ref{sec:semigroups}; see in particular
Theorem~\ref{thm:evpos-semigroups-c_k} and
Corollary~\ref{cor:evpos-semigroups-c_k}.

The assertions of Theorem~\ref{thm:snapshot} follow from combining
Proposition~\ref{prop:projections},
Theorem~\ref{thm:evpos-resolvents-c_k} and
Theorem~\ref{thm:evpos-semigroups-c_k}. Note that in general $t_0$ and
$\lambda_0$ in (i) and (ii) of Theorem~\ref{thm:snapshot} depend on the
choice of $f>0$: these are \emph{individual} rather than \emph{uniform}
conditions with respect to $f$. We will also illustrate the distinction
between individual and uniform eventual positivity in
Section~\ref{sec:semigroups}; see in particular
Examples~\ref{ex_ind_evtl_pos_does_not_imply_unif_evtl_pos}
and~\ref{ex_ind_evtl_pos_does_not_imply_unif_evtl_pos_for_compact_res}.

In Section~\ref{section_applications_on_c_k} we apply our theory to a
diverse range of examples: matrix exponentials, a Dirichlet-to-Neumann
operator, the square of the Laplacian with Robin boundary conditions,
and a delay differential equation. Here, the advantages of our approach
become apparent: in practice it is usually easy to check the required
condition on the spectral projection of the dominant eigenvalue, and
this characterises eventual positivity.

In Section~\ref{sec:spectral_bound} we prove various spectral properties
of operators generating eventually positive semigroups on arbitrary
Banach lattices, in particular regarding the spectral bound. These
generalise corresponding results about positive semigroups, and will be
needed in several other places throughout the
article. Section~\ref{sec:resolvents_final_remarks} is concerned with
some remarks on resolvents. The abstract results in
Sections~\ref{sec:spectral_bound} and~\ref{sec:resolvents_final_remarks}
stand apart from the main thrust of the paper, which is concerned with
characterisation theorems suitable for concrete applications. Therefore,
we defer them until the end.

In \cite{DanersII}, we will prove many similar results both for the
technically more complicated case of arbitrary Banach lattices and for
weaker forms of eventual positivity, which will allow us to establish
characterisations which are uniform in the function $f>0$. This can be
used to study various types of higher-order elliptic operators.

\section{Preliminaries}
\label{sec:prelims}
We briefly introduce some further notation and basic facts we use
throughout the paper. Whenever $E$ is a complex Banach space, we denote
by $\calL(E)$ the space of bounded linear operators on $E$. If $M
\subseteq E$, then $\langle M \rangle$ denotes the span of $M$ in $E$
(over $\bbC$). If $A \in \calL(E)$, then
\begin{equation}
  \label{eq:spr}
  \spr(A) := \sup\{|\lambda|\colon \lambda \in \sigma(A)\}
\end{equation}
denotes the \emph{spectral radius} of $A$. We will make extensive use of
properties of spectral projections, in particular in connection with
poles of the resolvent $R(\phdot,A)$. As an essential tool we make use
of the Laurent expansion of $R(\phdot,A)$ about isolated points of the
spectrum of $A$, which may be summarised as follows.
\begin{remark}
  \label{rem:Laurent-exp}
  Let $A$ be a closed operator on a Banach space $E$. The resolvent
  $\lambda\mapsto R(\lambda,A)$ is an analytic map on
  $\varrho(A)\subseteq\bbC\to\calL(E)$. If $\lambda_0$ is an isolated
  point of $\sigma(A)$, then there exist operators $U,P,B\in\calL(E)$
  such that the \emph{Laurent expansion}
  \begin{equation}
    \label{eq:Laurent-exp}
    R(\lambda,A)
    =\frac{P}{\lambda-\lambda_0}
    +\sum_{k=1}^\infty\frac{U^k}{(\lambda-\lambda_0)^{k+1}}
    +\sum_{k=0}^\infty(-1)^k(\lambda-\lambda_0)^kB^{k+1}
  \end{equation}
  is valid for $\lambda$ is some neighbourhood of $\lambda_0$.
  Moreover, $P$ is the spectral projection associated with $\lambda_0$,
  $U=-(\lambda_0 I-A)P$, $BP=PB=0$ and $UB=BU=0$; see
  \cite[Section~III-6.5]{Kato1976}, \cite[Section~VIII.8]{Yosida1995},
  \cite[p.\,246--248]{Engel2000} or \cite{campbell:13:lac}. The operator
  $U$ is called the (quasi-) nilpotent part of $A$ associated with
  $\lambda_0$.  For convenience we set $U^0:=P$. Then $U^n =
  (-1)^n(\lambda_0 I-A)^nP$ for all $n \in \bbN_0$.

  Assume for the rest of this remark that $\lambda_0$ is a \emph{pole of
    the resolvent}, that is, there exists a minimal number $m\geq 1$,
  the order of the pole, such that $U^{m-1}\neq 0$ and $U^m=0$.  If
  $\lambda_0$ is a pole of order $m$, then $\lambda_0$ is an eigenvalue
  since $(\lambda_0 I-A)U^{m-1}=-U^m=0$ and hence $\{0\} \neq
  \im(U^{m-1})\subseteq\ker(\lambda_0 I-A)$.  If $\lambda_0$ is a pole
  of order $m \ge 2$, then there always exists a \emph{generalised
    eigenvector} $x \in \ker((\lambda_0 I-A)^2) \setminus
  \ker(\lambda_0 I-A)$.  Indeed, if we choose $y \in E$ such that
  $U^{m-1}y \not= 0$, then $x = U^{m-2}y$ has the desired properties.

  The dimension of the eigenspace $\ker(\lambda_0 I-A)$ is called the
  \emph{geometric multiplicity} of $\lambda_0$ as an eigenvalue of
  $A$. The dimension of the \emph{generalised eigenspace}
  $\bigcup_{m\in\bbN}\ker(\lambda_0 I-A)^m$ is called the
  \emph{algebraic multiplicity}. The generalised eigenspace coincides
  with $\im P$. The eigenvalue $\lambda_0$ is called
  \emph{geometrically} (\emph{algebraically}) \emph{simple} if its
  geometric (algebraic) multiplicity equals $1$. The pole $\lambda_0$ of
  the resolvent is a simple pole if and only if $\ker(\lambda_0 I-A) =
  \ker((\lambda_0 I - A)^2)$ if and only if $\ker(\lambda_0 I - A) =
  \im P$. Assume that the geometric multiplicity is finite. Then it
  follows that $\lambda_0$ is a simple pole if and only if the algebraic
  and geometric multiplicities coincide.  See also
  \cite[Section~4]{campbell:13:lac} for some details.
\end{remark}
We assume the reader to be familiar with the basic theory of Banach
lattices. As a standard reference for this topic we refer to
\cite{Schaefer1974}. If $E$ denotes a complex Banach lattice, then $E$
is by definition the complexification of a real Banach lattice $E_\bbR$;
see \cite[Section~II.11]{Schaefer1974}. We denote by $E_+ := (E_\bbR)_+
:= \{f \in E_\bbR\colon f\geq 0\}$ the \emph{positive cone} in $E$.  To
avoid any ambiguities we shall adopt the following conventions, which
mirror the notation introduced above when $E=C(K)$: we call $f \in E$
\emph{positive} and write $f\geq 0$ if $f \in E_+$, we write $f > 0$ if
$f \geq 0$ and $f\neq 0$. If $E_+\subseteq E_{\bbR}$ has non-empty
interior, then we write $f\gg 0$ if $f$ is in the interior of $E_+$ and
say $f$ is \emph{strongly positive}.

A linear operator $T$ between two complex Banach lattices $E$ and $F$ is
called \emph{positive} if $TE_+ \subseteq F_+$. If $F_+
\subseteq F_{\mathbb R}$ has non-empty interior we call $T$
\emph{strongly positive} and we write $T\gg 0$ if $Tf\gg0$ whenever
$f>0$. We also apply this notation to elements from the dual space $E'$
and thus say the functional $\varphi\in E'$ is strongly positive if
$\varphi\gg 0$ as a linear map from $E$ into $\mathbb C$, that is,
$\langle\varphi,f\rangle = \varphi(f)>0$ whenever $f>0$.

A linear operator $A\colon E \supset D(A) \to E$ is called \emph{real}
if $x+iy \in D(A)$ implies that $x,y \in D(A)$ whenever $x,y \in
E_\bbR$, and if $A$ maps $D(A) \cap E_\bbR$ into $E_{\mathbb R}$.  Note
that a positive operator $T\in \calL(E)$ is automatically real.

We assume the reader to be familiar with the basic theory of
$C_0$-semigroups.  We will always denote a $C_0$-semigroup on a complex
Banach space $E$ by $(e^{tA})_{t \geq 0}$, where $A$ is the generator of
the semigroup. The \emph{growth bound} of the semigroup $(e^{tA})_{t\ge
  0}$ will be denoted by $\gbd(A)$.  A $C_0$-semigroup $(e^{tA})_{t \geq
  0}$ on a complex Banach lattice is called \emph{real} if the operator 
  $e^{tA}$ is real for every
$t\geq 0$.  It is easy to see that $(e^{tA})_{t\geq 0}$ is real if and
only if $A$ is real.

The following result will be important to reduce some results for
eventually positive semigroups to results for positive semigroups. For
real Banach lattices it is stated in
\cite[Proposition~III.11.5]{Schaefer1974}, but it easily generalises to
complex Banach lattices.
\begin{proposition}
  \label{prop_range_of_pos_proj}
  Let $E$ be a complex Banach lattice and let $P \in \calL(E)$ be a
  positive projection. Then there is an equivalent norm on $PE$ such
  that $PE_\bbR \subseteq PE$ is a real Banach lattice for the order
  induced by $E_\bbR$ and such that $PE$ becomes the complexification of
  the real Banach lattice $PE_\bbR$.
\end{proposition}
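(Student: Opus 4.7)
The plan is to reduce the statement to the real case treated in \cite[Proposition~III.11.5]{Schaefer1974} and then pass to the complexification.

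First, since $P$ is positive, it is real, so it leaves $E_\bbR$ invariant and its restriction $P_\bbR := P|_{E_\bbR} \colon E_\bbR \to E_\bbR$ is a positive projection on the real Banach lattice $E_\bbR$. Because $P$ is complex-linear, $P(x+iy) = P_\bbR x + i P_\bbR y$ for $x,y \in E_\bbR$, so $P$ is precisely the complexification of $P_\bbR$. In particular $PE = P_\bbR E_\bbR \oplus i P_\bbR E_\bbR$ as a real direct sum, and the natural candidate for a real Banach lattice inside $PE$ is $PE_\bbR := P_\bbR E_\bbR$.

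Next I would apply Schaefer's result to $P_\bbR$ on $E_\bbR$. This supplies an equivalent norm $\|\phdot\|_\sim$ on $P_\bbR E_\bbR$ under which $P_\bbR E_\bbR$, equipped with the order induced from $E_\bbR$ (i.e.\ the cone $P_\bbR E_\bbR \cap E_+$), becomes a real Banach lattice; the lattice operations are given by $x \vee_P y = P_\bbR(x \vee y)$ and $x \wedge_P y = P_\bbR(x \wedge y)$. A quick check (using positivity of $P_\bbR$ in both directions) shows that this induced cone agrees with $P_\bbR E_+$, so the order on $P_\bbR E_\bbR$ is intrinsic to the projection.

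Then I would form the complexification of the real Banach lattice $(P_\bbR E_\bbR, \|\phdot\|_\sim)$ in the canonical way described in \cite[Section~II.11]{Schaefer1974}, producing a complex Banach lattice whose underlying vector space is exactly $P_\bbR E_\bbR \oplus i P_\bbR E_\bbR = PE$. The resulting norm on $PE$, call it $\|\phdot\|_{\sim,\bbC}$, is equivalent to the norm inherited from $E$: the real norm $\|\phdot\|_\sim$ is by construction equivalent to the norm of $E_\bbR$ restricted to $P_\bbR E_\bbR$, and taking the complexification of equivalent real norms yields equivalent complex norms; moreover, the canonical complexification of the restriction of the $E_\bbR$-norm is itself equivalent to the restriction of the complex $E$-norm to $PE$, with universal constants depending only on the complexification procedure. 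Chaining these equivalences gives the claimed equivalent norm.

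The only real point to watch is the last equivalence, i.e.\ that the restriction of the ambient complex norm of $E$ to the subspace $PE$ is comparable (up to a universal constant) to the complexification norm built from the restriction of the real norm, so that Schaefer's real-norm equivalence can be transported through the complexification step without losing the property of being equivalent to the original subspace norm; but this is a standard bound between the two natural complex norms on the complexification of a real Banach lattice and requires no further hypotheses on $P$.
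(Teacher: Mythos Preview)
Your proposal is correct and follows precisely the route the paper indicates: the paper does not give a detailed proof but merely cites \cite[Proposition~III.11.5]{Schaefer1974} for the real case and asserts that ``it easily generalises to complex Banach lattices''. You have carried out exactly that generalisation---restrict $P$ to a positive projection on $E_\bbR$, apply Schaefer, and complexify---and your handling of the norm equivalences in the complexification step is the only nontrivial point, which you have identified and addressed correctly.
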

We will need the following result on the range of the spectral
projection associated with the peripheral spectrum.
\begin{proposition}
  \label{prop_convergence_on_rangel_of_per_spec_proj}
  Let $(e^{tA})_{t \geq 0}$ be a $C_0$-semigroup on a complex Banach
  space $E$ such that $\sigma(A)\neq \emptyset$.  Suppose that
  $\sigma_{\per}(A)$ as defined in \eqref{eq:sigma-per} is finite and
  consists of simple poles of the resolvent. Denote by $P_{\per}$ the
  spectral projection corresponding to $\sigma_{\per}(A)$.  Then there
  exists a sequence of positive integers $t_n \to \infty$ such that
  $e^{t_n(A-\spb(A) I)}f \to f$ for every $f\in\im(P_{\per} )$.
\end{proposition}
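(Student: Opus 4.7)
Setting $s := \spb(A)$ and writing $\sigma_{\per}(A) = \{s + i\beta_1, \ldots, s + i\beta_k\}$ with distinct $\beta_j \in \bbR$, the plan is to decompose $P_{\per}$ along the peripheral eigenvalues and reduce the assertion to a Kronecker-type approximation on the torus $\bbT^k$. Let $P_j$ denote the spectral projection for the isolated spectral value $\lambda_j := s + i\beta_j$. Then $P_{\per} = P_1 + \cdots + P_k$ with $P_j P_\ell = 0$ for $j \ne \ell$, and $\im P_{\per} = \bigoplus_{j=1}^k \im P_j$. Since each $\lambda_j$ is a \emph{simple} pole, Remark~\ref{rem:Laurent-exp} forces the nilpotent part at $\lambda_j$ to vanish, hence $(A - \lambda_j I) P_j = 0$; in particular $A$ restricts to a bounded operator on the invariant subspace $\im P_{\per}$, and
\begin{equation*}
  e^{t(A - sI)} f \;=\; \sum_{j=1}^{k} e^{it\beta_j}\, P_j f \qquad (t \in \bbR,\; f \in \im P_{\per}).
\end{equation*}

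In view of this formula, it suffices to find positive integers $t_n \to \infty$ with $e^{it_n \beta_j} \to 1$ for every $j = 1, \ldots, k$, because then $e^{t_n(A - sI)} f \to \sum_{j=1}^{k} P_j f = f$ for every $f \in \im P_{\per}$. This is a standard compactness argument on $\bbT^k$: the sequence $g_n := (e^{in\beta_1}, \ldots, e^{in\beta_k}) \in \bbT^k$ is a group homomorphism in $n$, so by compactness of $\bbT^k$ there is a strictly increasing subsequence $(n_m)$ with $g_{n_m}$ convergent, and the positive integers $\tau_m := n_{m+1} - n_m$ then satisfy $g_{\tau_m} = g_{n_{m+1}} g_{n_m}^{-1} \to (1, \ldots, 1)$. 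If $(\tau_m)$ is unbounded one extracts a subsequence tending to infinity; if it is bounded, some $\tau \in \bbN$ occurs infinitely often with $g_\tau = (1, \ldots, 1)$, and then one takes $t_n := n\tau$.

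The key leverage is the simple-pole hypothesis, which collapses $A|_{\im P_{\per}}$ to the finite sum $\sum_j \lambda_j \, P_j|_{\im P_{\per}}$ of scalar multiples of mutually annihilating bounded projections; the subsequent approximation on $\bbT^k$ is essentially elementary and poses no real obstacle. Note that $\im P_{\per}$ itself need not be finite-dimensional: only the index set of the direct sum needs to be finite, which is guaranteed by the finiteness of $\sigma_{\per}(A)$.
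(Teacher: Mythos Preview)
Your proof is correct and follows essentially the same route as the paper: decompose $P_{\per}$ into the spectral projections $P_j$, use the simple-pole hypothesis to get $e^{t(A-sI)}f = \sum_j e^{it\beta_j} P_j f$ on $\im P_{\per}$, and reduce to finding positive integers $t_n \to \infty$ with $e^{it_n\beta_j} \to 1$ for all $j$. The only difference is that the paper invokes recurrence of the identity under the group rotation $z \mapsto (e^{i\beta_1},\ldots,e^{i\beta_k})\,z$ on $\bbT^k$ by citing Furstenberg, whereas you supply a short self-contained compactness/pigeonhole argument for the same fact; your version is more elementary and avoids the external reference, at no real cost.
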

\begin{proof}
  We may assume throughout the proof that $\spb(A) = 0$ and that
  $\sigma_{\per}(A) \not= \emptyset$. Let $f \in P_{\per}E$ and let
  $\sigma_{\per}(A) = \{i\beta_1,\dots,i\beta_m\}$ for
  $\beta_1,\dots,\beta_m \in \bbR$. If $P_k$ denotes the spectral projection
  associated with $i\beta_k$, then $P_{\per} = P_1 + \dots + P_m$. As
  $i\beta_k$ are simple poles of the resolvent we have $\im
  P_k=\ker(i\beta_k I-A)$ for every $k=1,\dots,m$ and therefore
  $e^{tA}f = e^{it\beta_1} P_1f + \dots +e^{it\beta_m}P_mf$ for all
  $t\geq 0$. Let $\lambda=(e^{i\beta_1},\dots,e^{i\beta_m})\in\bbT^m$,
  where $\bbT$ is the unit circle in $\mathbb C$ and $\bbT^m$ the
  standard $m$-dimensional torus. Define $s_\lambda\colon \bbT^m \to
  \bbT^m$ to be the group rotation by $\lambda$, that is, $s_\lambda z =
  \lambda z$ for every $z\in \bbT^m$; here, $\bbT^m$ is endowed with
  pointwise multiplication, with respect to which it is a compact
  topological group.  By a standard result from topological dynamical
  systems the element $\one = (1,\dots,1)$ is \emph{recurrent} with
  respect to the group rotation $s_\lambda$, that is, there exists a
  sequence of positive integers $t_n \to \infty$ such that
  $s_\lambda^{t_n} \one \to \one$; see \cite[Definition~1.2 and
  Theorem~1.2]{Furstenberg1981}. We conclude that $\lambda^{t_n} \to
  \one$ and hence,
  \begin{displaymath}
    e^{t_nA}f 
    = e^{it_n \beta_1} P_1f + \dots +e^{it_n \beta_m}P_mf
    \to P_1f + \dots + P_mf = f
  \end{displaymath}
  as $n\to\infty$ as claimed.
\end{proof}

\section{Strongly positive projections and Perron--Frobenius properties}
\label{sec:projections}
One important feature of positive operators is that the spectral radius
is itself an element of the spectrum. If the operator $T$ is irreducible
and the spectral radius $\spr(T) > 0$ is a pole of the resolvent of $T$,
then the Perron--Frobenius theorem (or Krein--Rutman theorem) asserts
that $\spr(T)$ is an algebraically simple eigenvalue of $T$ and $T'$
with a strongly positive eigenvector; see \cite[Theorem~V.5.2 and its
Corollary and Theorem~V.5.4]{Schaefer1974}. We may refer to this as $T$
having a ``Perron--Frobenius property''. However, there seems to be no intrinsic 
reason why this property should only be considered for the spectral radius of a 
bounded, positive operator. Hence, we start by characterising arbitrary
eigenvalues having a Perron--Frobenius type property. We also explain
this result geometrically.

\begin{proposition}
  \label{prop:projections}
  Let $A$ be a closed, densely defined and real operator on
  $E:=C(K)$. Suppose that $\lambda_0 \in \bbR$ is an eigenvalue of $A$
  and a pole of the resolvent $R(\phdot,A)$. Let $P$ be the spectral
  projection associated with $\lambda_0$. Then the following assertions
  are equivalent.
  \begin{enumerate}[\upshape (i)]
  \item $P\gg 0$;
  \item The eigenvalue $\lambda_0$ of $A$ is geometrically simple, and
    $\ker(\lambda_0 I - A)$ and $\ker(\lambda_0 I-A')$ each contain a
    strongly positive vector;
  \item The eigenvalue $\lambda_0$ of $A$ is algebraically simple,
    $\ker(\lambda_0 I - A)$ contains a strongly positive vector and
    $\im(\lambda_0 I-A)\cap E_+ = \{0\}$.
  \end{enumerate}
  If assertions {\upshape(i)}--{\upshape(iii)} are fulfilled, then
  $\lambda_0$ is a simple pole of the resolvents $R(\phdot,A)$ and
  $R(\phdot,A')$. Moreover, $\dim(\im P)=\dim(\im P')=1$ and $\lambda_0$
  is the only eigenvalue of $A$ having a positive eigenvector.
\end{proposition}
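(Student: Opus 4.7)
The plan is to pivot the equivalences around condition (iii), proving first the straightforward direction (iii) $\Rightarrow$ (i), then the main technical step (i) $\Rightarrow$ (iii), and finally (iii) $\Leftrightarrow$ (ii); the additional conclusions will fall out as by-products. For (iii) $\Rightarrow$ (i): algebraic simplicity gives $\dim(\im P) = 1$, so since the geometric multiplicity is at least one, Remark~\ref{rem:Laurent-exp} yields that $\lambda_0$ is a simple pole with $\im P = \ker(\lambda_0 I - A) = \operatorname{span}(v)$, and the spectral decomposition gives $\ker P = \im(\lambda_0 I - A)$. Writing $Pf = \phi(f) v$ for a unique $\phi \in E'$, it suffices to prove $\phi(f) > 0$ whenever $f > 0$: if instead $\phi(f) \le 0$, then $v \gg 0$ gives $f - \phi(f) v \ge f > 0$, while $f - \phi(f) v = (I - P) f \in \ker P = \im(\lambda_0 I - A)$, contradicting the cone condition. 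Hence $Pf = \phi(f) v \gg 0$.

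The crux is (i) $\Rightarrow$ (iii), where the heart of the matter is showing $\dim(\im P) = 1$. First, $P$ is real: since $A$ is real and $\lambda_0 \in \bbR$, $P$ arises from a contour integral of $R(\phdot,A)$ along a small circle around $\lambda_0$ symmetric about the real axis, so $\im P$ is the complexification of $\im P \cap E_\bbR$. Fix $f_0 > 0$ and set $g := P f_0$; by hypothesis $g \gg 0$, so $g \ge \beta \one$ for some $\beta > 0$. Given any $h \in \im P \cap E_\bbR$, the line $t \mapsto h + tg$ lies in $E_+$ for large positive $t$ and outside $E_+$ for sufficiently negative $t$, so $t^* := \inf\{t \in \bbR : h + tg \ge 0\}$ is finite and attained by closedness of $E_+$. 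If $h + t^* g \neq 0$, then $h + t^* g > 0$ and lies in $\im P$, so (i) forces $h + t^* g = P(h + t^* g) \gg 0$; hence $h + t^* g \ge \beta' \one$ for some $\beta' > 0$, and subtracting a sufficiently small multiple of $g$ keeps us in $E_+$, contradicting the minimality of $t^*$. Therefore $h = -t^* g \in \operatorname{span}_{\bbR}(g)$, giving $\dim_\bbR(\im P \cap E_\bbR) = 1$ and hence $\dim_\bbC(\im P) = 1$. This is the desired algebraic simplicity; $g$ is the required strongly positive eigenvector, and the cone condition follows from $\ker P = \im(\lambda_0 I - A)$, since any $f > 0$ in $\ker P$ would satisfy $Pf = 0$, contradicting $Pf \gg 0$.

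For (iii) $\Leftrightarrow$ (ii) and the remaining claims: given (iii), the functional $\phi$ from paragraph one is strongly positive, and since $PA = AP$ combined with $Pf = \phi(f) v$ yields $\phi(Af) = \lambda_0 \phi(f)$, we have $\phi \in D(A')$ with $A' \phi = \lambda_0 \phi$, which gives (ii). Conversely, assume (ii). If $\lambda_0$ were not algebraically simple, then Remark~\ref{rem:Laurent-exp} combined with geometric simplicity would produce $x$ with $(\lambda_0 I - A) x = c v$ for some $c \neq 0$; applying $\phi \in \ker(\lambda_0 I - A')$ gives $0 = \phi((\lambda_0 I - A) x) = c \phi(v)$, contradicting $\phi(v) > 0$. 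The same bracket argument yields $\phi(f) = 0$ for any $f \in \im(\lambda_0 I - A) \cap E_+$, and combined with $\phi \gg 0$ this forces $f = 0$. The additional conclusions are then immediate: algebraic simplicity makes $\lambda_0$ a simple pole of $R(\phdot, A)$ with $\dim(\im P) = 1$, the dual projection $P'$ is also of rank one, so $\lambda_0$ is a simple pole of $R(\phdot, A')$ with $\dim(\im P') = 1$; and if $Aw = \mu w$ with $w > 0$ and $\mu \neq \lambda_0$, then $Pw$ would be an eigenvector of $A|_{\im P}$ for $\mu$, impossible since $\sigma(A|_{\im P}) = \{\lambda_0\}$, so $Pw = 0$, contradicting $Pw \gg 0$ from (i).

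I expect the main obstacle to be the sliding-line argument in (i) $\Rightarrow$ (iii). It relies crucially on the openness of the strongly positive cone in $C(K)$: every strongly positive element is bounded below by some $\beta \one$, and this quantitative gap is precisely what lets us decrease $t$ slightly while remaining in $E_+$. Any purely order-theoretic approach on general Banach lattices whose positive cone has empty interior would fail at exactly this step, consistent with the restriction of the present theorem to $E = C(K)$.
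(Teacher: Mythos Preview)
Your proof is correct and follows essentially the same approach as the paper. The paper argues cyclically (i) $\Rightarrow$ (ii) $\Rightarrow$ (iii) $\Rightarrow$ (i) whereas you pivot around (iii), but the key sliding/minimality argument showing $\dim(\im P)=1$ is identical, and the remaining implications match up almost verbatim; the only cosmetic differences are that the paper obtains the strongly positive dual eigenvector via $P'$ rather than via your rank-one functional $\phi$, and proves uniqueness of the positive eigenvalue using the cone condition in (iii) rather than your spectral argument via $Pw=0$. One small point: in your (iii) $\Rightarrow$ (i) paragraph you implicitly use $\phi(f)\in\bbR$ for $f>0$ before establishing that $P$ is real---the paper is explicit about this, and you only justify it in the subsequent paragraph, so you may wish to move that remark earlier.
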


\begin{proof}
  By replacing $A$ with $A-\lambda_0 I$, we may assume without loss of
  generality that $\lambda_0=0$.

  ``(i) $\Rightarrow$ (ii)'' As $P$ is a positive operator its image is
  spanned by positive elements. Let $u,y\in E_+\cap\im P$ be
  non-zero. Since $P\gg 0$ we have $u=Pu\gg 0$ and so
  $\alpha_0:=\inf\{\alpha\colon \alpha u-y\geq 0\} \in (0,\infty)$. As
  $E_+$ is closed $\alpha_0 u-y\geq 0$, and moreover $\alpha_0 u-y\in\im
  P$. Hence, either $\alpha_0u-y=0$ or $\alpha_0u-y\gg 0$. As $E_+$ has
  non-empty interior in $E_{\bbR}$, the second possibility cannot occur
  as it contradicts the definition of $\alpha_0$. Hence $y=\alpha_0u$
  and so $\dim(\im P)=1$. In particular $\lambda_0$ is algebraically and
  hence geometrically simple with eigenfunction $u\gg 0$. Note that $0$
  is also an algebraically simple eigenvalue of $A'$, see
  \cite[Remark~III-6.23]{Kato1976}, and that the dual $P'$ of $P$ is the
  corresponding spectral projection. As $P\gg 0$ also $P'v\gg 0$ for all
  $v>0$ in $E'_+$, that is, $\im P'$ is spanned by an eigenvector $v\gg
  0$ of $A'$ corresponding to the eigenvalue $0$.

  ``(ii) $\Rightarrow$ (iii)'' By assumption $0$ is a geometrically
  simple eigenvalue. To prove that it is algebraically simple it is
  sufficient to show that $x\in\ker A^2$ implies $x\in\ker A$, see
  Remark~\ref{rem:Laurent-exp}. Hence let $x\in\ker A^2$. We know from
  (ii) that $\ker A$ is spanned by a vector $u\gg 0$ and that $\ker A'$
  contains a vector $v\gg 0$. As $Ax\in\ker A$, there exists
  $\alpha\in\mathbb C$, $|\alpha|=1$ such that $\alpha Ax\geq 0$.  As
  $\langle v,\alpha Ax\rangle =\langle A'v,\alpha x\rangle = 0$ and
  $v\gg 0$ we conclude that $Ax = 0$.  Therefore $\langle u\rangle =
  \ker A=\im P$. Now let $y=Ax\in E_+\cap\im A$. Then $\langle
  v,y\rangle=\langle v,Ax\rangle=\langle A'v,x\rangle=0$. As $v\gg 0$
  and $y\geq 0$ we conclude that $y=0$. Hence, $E_+\cap\im A=\{0\}$.

  ``(iii) $\Rightarrow$ (i)'' As $0$ is algebraically simple, (iii)
  implies that $\im P = \ker A=\langle u\rangle$ for some $u \gg 0$ and
  $E=\im P\oplus\im A$. Hence, if $x>0$, then there exist
  $\alpha\in\mathbb C$ and $y\in D(A)$ such that $x=\alpha u+Ay$. Since
  $A$ is real, so is $P$, and hence $\alpha \in \bbR$. If $\alpha\leq
  0$, then $x-\alpha u=Ay\in E_+\cap\im A$, which implies $Ay=0$ by
  (iii). Then $x=\alpha u=Px\leq 0$ and $0<x$, which is a
  contradiction. Hence, $\alpha > 0$ and thus $Px=\alpha u\gg 0$.

  Finally note that the algebraic simplicity of $0$ as an eigenvalue of
  $A$ from (iii) implies that $0$ is a simple pole of the resolvent
  $R(\phdot,A)$; thus, it is also a simple pole of $R(\phdot,A') =
  R(\phdot,A)'$. Now suppose that $\lambda_1\neq 0$ is an eigenvalue of
  $A$ with eigenvector $u_1$. Then $u_1=\lambda_1^{-1}Au_1\in\im A$. As
  $E_+\cap\im A=\{0\}$ by (iii) we conclude that $u_1$ cannot be
  positive. Hence, $0$ is the only eigenvalue having a positive
  eigenvector.
\end{proof}
To the best of our knowledge, the relationship between strong positivity
of the spectral projection and the existence of strongly positive
eigenvectors for arbitrary poles of the resolvent as given in
Proposition~\ref{prop:projections} has not been investigated
before. Our argument which shows that algebraic simplicity follows from 
geometric simplicity if the corresponding eigenvectors of $A$ and $A'$ 
are strongly positive is similar to the proof of \cite[Theorem~4.12(ii)]{Grobler1995}.
Moreover, a related argument for the eigenspace associated with the
spectral bound of a positive semigroup can be found in
\cite[Remark~C-IV-2.2(c)]{Arendt1986}.
\begin{remark}
  \label{rem:projections}
  There is a simple geometric explanation for the equivalent conditions
  in Proposition~\ref{prop:projections} if $E=\mathbb R^N$ and
  $\lambda_0=0$. Due to the algebraic simplicity of the eigenvalue $0$
  the direct sum decomposition $E=\ker(A)\oplus\im(A)$ completely
  reduces $A$. Recall that $Px$ is the projection of $x$ onto the span
  of an eigenvector $u$ of $A$. The fact that $P\gg 0$ means that $u\gg
  0$ and that $E_+$ is on one side of $\im A$. This is also the explicit
  statement in (iii). To interpret (ii) let $v\gg 0$ be an eigenvector
  of $A'$ to the eigenvalue $0$. Then, $\langle v,Ax\rangle=\langle
  A'v,x\rangle=0$ for all $x\in\mathbb R^N$, so that $\im A=\langle
  v\rangle^\perp$ is perpendicular to $v$. Hence, if $E_+$ is to be on
  one side of $\im A$, then $v\gg 0$ (or equivalently $v\ll 0$). The
  configuration is illustrated in Figure~\ref{fig:projection}.
  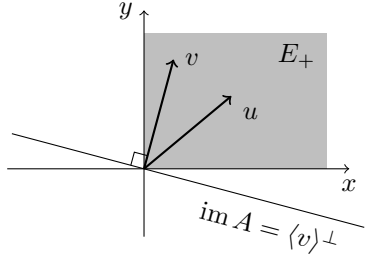
\begin{figure}[ht]
    \centering
    \begin{tikzpicture}[scale=.6,font=\footnotesize]
      \path[fill=lightgray] (0,0) rectangle (4,3) node[below left]
      {$E_+$};%
      \draw[->] (-3,0) -- (4.5,0) node[below] {$x$};%
      \draw[->] (0,-1.5) -- (0,3.5) node[left] {$y$};%
      \draw[->,thick] (0,0) -- (40:2.5) node[below right] {$u$};%
      \begin{scope}[rotate=-15]
        \draw[->,thick] (0,0) -- (0,2.5) node[right] {$v$};%
        \draw (-.3,0) |- (0,.3);%
        \draw (-3,0) -- (5,0) node[below,near end,rotate=-15] {$\im
          A=\langle v\rangle^\perp$};
      \end{scope}
    \end{tikzpicture}
    \caption{Geometric meaning of the Perron--Frobenius property}
    \label{fig:projection}
  \end{figure}
\end{remark}
\begin{remark}
  In \cite{Noutsos2006, Noutsos2008}, a matrix $A$ is said to have the
  \emph{strong Perron--Frobenius property} if $\spr(A)$ is a simple and
  dominant eigenvalue of $A$ having a strongly positive
  eigenvector. However, the full conclusion of the Perron--Frobenius theorem is
  statement (ii) in Proposition~\ref{prop:projections}; see for instance
  \cite[Theorem~1.1]{Seneta1981}. Hence it seems rather more natural to
  define this latter statement to be the ``strong Perron--Frobenius
  property''. According to Proposition~\ref{prop:projections} one could
  then summarise the conclusion of the Perron--Frobenius theorem by
  saying that the spectral projection $P$ associated with $\spr(A)$ is
  strongly positive.
\end{remark}

\section{Eventually strongly positive resolvents}
\label{sec:resolvents}
It is a feature of Perron--Frobenius theory that the resolvent
$R(\lambda,A)$ is positive for all $\lambda>\spb(A)$. In the context of
eventually positive semigroups we cannot expect such a property for all
$\lambda>\spb(A)$, but we show that there is nevertheless a weaker
positivity property. We begin with a definition.
\begin{definition}
  \label{def:evpos-resolvent-c_k}
  Let $A$ be a closed real operator on $E = C(K)$ and let $\lambda_0$ be
  either $-\infty$ or a spectral value of $A$ in $\bbR$.
  \begin{enumerate}[(a)]
  \item The resolvent $R(\phdot,A)$ is called \emph{individually
      eventually} (\emph{strongly}) \emph{positive} \emph{at}
    $\lambda_0$, if there is a $\lambda_2 > \lambda_0$ with the
    following properties: $(\lambda_0, \lambda_2] \subseteq\rho(A)$ and
    for each $f \in E_+ \setminus\{0\}$ there is a $\lambda_1 \in
    (\lambda_0, \lambda_2]$ such that $R(\lambda,A)f \geq 0$ ($\gg 0$)
    for all $\lambda \in (\lambda_0, \lambda_1]$.
  \item The resolvent $R(\phdot,A)$ is called \emph{uniformly
      eventually} (\emph{strongly}) \emph{positive} \emph{at}
    $\lambda_0$, if there exists $\lambda_1 > \lambda_0$ with the
    following properties: $(\lambda_0, \lambda_1] \subseteq\rho(A)$ and
    $R(\lambda,A) \geq 0$ ($\gg 0$) for every $\lambda \in (\lambda_0,
    \lambda_1]$.
  \end{enumerate}
\end{definition}
Example~\ref{ex_ind_evtl_pos_does_not_imply_unif_evtl_pos} below shows
that it is necessary to distinguish between individual and uniform
properties in Definition~\ref{def:evpos-resolvent-c_k}.

Positivity of the resolvent is an important concept within the theory of
positive semigroups. For example, it is well known that a
$C_0$-semigroup $(e^{tA})_{t \ge 0}$ is positive if and only if
$R(\lambda,A) \ge 0$ for all $\lambda > \spb(A)$. If this is the case,
then we even have $R(\lambda,A) \gg 0$ for some (equivalently all)
$\lambda > \spb(A)$ if and only if the semigroup is irreducible, see
\cite[Definition~B-III-3.1]{Arendt1986} (note however that $R(\lambda,A)
\gg 0$ for some $\lambda > \spb(A)$ implies $R(\lambda,A) \gg 0$ for all
$\lambda > \spb(A)$ only if it is already known that $(e^{tA})_{t \ge
  0}$ is a positive semigroup). The assertions in
Definition~\ref{def:evpos-resolvent-c_k} are thus generalisations of
those well-known resolvent properties to spectral values other than
$\spb(A)$ and to smaller ranges of $\lambda$.

We proceed by stating a simple criterion for uniform eventual positivity
of the resolvent at some spectral value $\lambda_0$.
\begin{proposition}
  \label{prop_evtl_pos_res_at_one_point_on_c_k}
  Let $A$ be a closed real operator on $E=C(K)$. Let $\lambda_0$ be
  $-\infty$ or a spectral value of $A$ in $\bbR$. Assume that
  there exists $\lambda_1>\lambda_0$ such that $(\lambda_0, \lambda_1]
  \subseteq \rho(A)$ and $R(\lambda_1,A)\geq 0$. Then the following
  assertions are true.
  \begin{enumerate}[\upshape (i)]
  \item The resolvent $R(\phdot,A)$ is uniformly eventually positive at
    $\lambda_0$. More precisely, $R(\lambda,A)\geq 0$ for all
    $\lambda\in(\lambda_0,\lambda_1]$.
  \item If $R(\lambda_1,A)^n$ is strongly positive for some $n\in\mathbb
    N$, then $R(\phdot,A)$ is uniformly eventually strongly positive at
    $\lambda_0$. More precisely, $R(\lambda,A)\gg 0$ for all
    $\lambda\in(\lambda_0,\lambda_1)$.
  \end{enumerate}
\end{proposition}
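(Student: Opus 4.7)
The plan for both parts is to exploit the resolvent identity, which yields the commutativity of $R(\lambda,A)$ and $R(\mu,A)$, the local Taylor expansion
\begin{equation*}
R(\lambda,A) = \sum_{k=0}^{\infty} (\mu-\lambda)^k R(\mu,A)^{k+1}
\end{equation*}
valid in a disk around $\mu \in \varrho(A)$ of radius at most $\dist(\mu, \sigma(A))$, as well as the purely algebraic finite identity
\begin{equation*}
R(\lambda,A) = \sum_{k=0}^{n-1}(\mu-\lambda)^k R(\mu,A)^{k+1} + (\mu-\lambda)^n R(\mu,A)^n R(\lambda,A),
\end{equation*}
valid for every $\lambda \in \varrho(A)$ and every $n \in \bbN$.

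For (i), the plan is to argue by continuation along the real interval $(\lambda_0,\lambda_1]$. I would set
\begin{equation*}
\mu_* := \inf\{\mu \in (\lambda_0,\lambda_1] : R(\nu,A) \geq 0 \text{ for all } \nu \in [\mu,\lambda_1]\}.
\end{equation*}
Applying the Taylor expansion around $\mu = \lambda_1$ with $R(\lambda_1,A) \geq 0$ and $(\lambda_1 - \lambda)^k \geq 0$ for $\lambda < \lambda_1$ gives $R(\lambda,A) \geq 0$ for $\lambda$ just below $\lambda_1$, so $\mu_* < \lambda_1$. Suppose for contradiction that $\mu_* > \lambda_0$. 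Then $\mu_* \in \varrho(A)$ lies at strictly positive distance from $\sigma(A)$, and the continuity of the resolvent together with closedness of $E_+$ gives $R(\mu_*,A) \geq 0$. The expansion around $\mu_*$ then converges on some open interval extending below $\mu_*$ with only non-negative terms, yielding $R(\lambda,A) \geq 0$ for $\lambda$ slightly below $\mu_*$ and contradicting the definition of $\mu_*$. Hence $\mu_* = \lambda_0$.

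For (ii), I would fix $\lambda \in (\lambda_0,\lambda_1)$ and apply the telescoped identity with $\mu = \lambda_1$. Each term in the finite sum is non-negative, since $(\lambda_1 - \lambda)^k > 0$ and $R(\lambda_1,A)^{k+1} \geq 0$. For the remainder, I would observe that $R(\lambda,A)$ is injective as a bijection of $E$ onto $D(A)$, and non-negative by (i), so $R(\lambda,A) f > 0$ for every $f > 0$; the strong positivity hypothesis then yields $R(\lambda_1,A)^n R(\lambda,A) f \gg 0$. Since adding any $g \geq 0$ to an $h \gg 0$ (with $h \geq \beta\one$) produces another element bounded below by $\beta\one$, the total expression for $R(\lambda,A)f$ is strongly positive.

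The principal obstacle I anticipate is the continuation step in part (i): one has to track carefully the interplay between the radius of convergence of the Taylor series around $\mu_*$, which equals $\dist(\mu_*,\sigma(A))$, and the positivity already established on $[\mu_*,\lambda_1]$, so that the extension indeed covers a genuine open neighborhood strictly below $\mu_*$. This is precisely where $\mu_* > \lambda_0$ is used, since $\lambda_0$ is the nearest candidate spectral value. Part (ii) is by contrast mostly a bookkeeping exercise, once one notices that commutativity of the resolvents allows the strongly positive factor $R(\lambda_1,A)^n$ to be applied last, after the merely positive and injective $R(\lambda,A)$.
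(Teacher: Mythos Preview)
Your proof is correct. Part~(i) follows exactly the paper's line: the paper phrases the continuation as ``$U$ is non-empty, open and closed in $(\lambda_0,\lambda_1)$'', you phrase it as ``the infimum $\mu_*$ cannot lie strictly above $\lambda_0$'', but the engine in both cases is the local power series $R(\lambda,A)=\sum_{k\ge 0}(\mu-\lambda)^k R(\mu,A)^{k+1}$ with non-negative coefficients.

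Part~(ii), however, is handled differently. The paper treats (i) and (ii) simultaneously by the same connectedness argument, observing that once one is at a centre $\mu_0$ with $R(\mu_0,A)\gg 0$ all terms of the series are strongly positive, and bootstrapping along the interval. You instead prove (i) first and then dispose of (ii) in a single stroke via the finite identity
\[
R(\lambda,A)=\sum_{k=0}^{n-1}(\lambda_1-\lambda)^k R(\lambda_1,A)^{k+1}+(\lambda_1-\lambda)^n R(\lambda_1,A)^n R(\lambda,A),
\]
noting that the remainder is strongly positive because $R(\lambda,A)f>0$ (from (i) plus injectivity) and $R(\lambda_1,A)^n\gg 0$. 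This is a genuine simplification: it avoids a second continuation argument and requires no convergence considerations, only the algebraic resolvent identity iterated $n$ times. The paper's unified treatment is more symmetric, but your argument for (ii) is shorter and makes transparent exactly where the hypothesis on $R(\lambda_1,A)^n$ enters.
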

\begin{proof}
  We prove (i) and (ii) simultaneously. To that end let $\prec$ denote
  $\leq$ in case (i) and $\ll$ in case (ii). We set
  \begin{math}
    U:=\bigl\{\lambda\in(\lambda_0, \lambda_1)\colon
    \text{$R(\mu,A)\succ 0$ for all $\mu\in(\lambda,\lambda_1)$}\bigr\}
  \end{math}
  and show that $U=(\lambda_0,\lambda_1)$.  Because
  $(\lambda_0,\lambda_1)$ is connected it is sufficient to show that $U$
  is non-empty, open and closed in $(\lambda_0,\lambda_1)$.
  
  If $\lambda_n\in U$ with $\lambda_n\to\lambda\in
  (\lambda_0,\lambda_1)$ and $\mu>\lambda$, then $\mu>\lambda_n$ for $n$
  large enough.  Hence, by definition of $U \ni \lambda_n$, we have
  $R(\mu,A)\succ 0$ for all $\mu\in(\lambda,\lambda_1)$. Thus
  $\lambda\in U$, showing that $U$ is closed in $(\lambda_0,\lambda_1)$.
  
  Given $\mu_0$ in the open set $\varrho(A)\subseteq\bbC$, the analytic
  function $R(\phdot,A)$ can be expanded as a power series
  \begin{equation}
    \label{form_series_repr_res_on_right_real_line}
    R(\mu,A)
    =\sum_{k=0}^\infty (\mu_0-\mu)^k R(\mu_0,A)^{k+1}
  \end{equation}
  whenever $\varepsilon>0$ and $\mu$ are such that $\mu\in
  B(\mu_0,\varepsilon)\subseteq\rho(A)$; see
  \cite[Proposition~IV.1.3(i)]{Engel2000}. If we choose
  $\mu_0=\lambda_1$, then $R(\mu_0,A)\geq 0$ by assumption. In case (ii)
  at least one of the terms in
  \eqref{form_series_repr_res_on_right_real_line} is strongly
  positive if $\mu<\mu_0$. Hence in both cases
  $(\lambda_1-\varepsilon,\lambda_1)\subseteq U$ and so
  $U\neq\emptyset$.
  
  To show that $U$ is open let $\lambda\in U$ and let $\varepsilon>0$
  such that $B(\lambda,\varepsilon)\subseteq\rho(A)$. Then choose
  $\mu_0>\lambda$ and $\varepsilon_0>0$ such that $\mu_0 \in U$ and
  $\lambda\in B(\mu_0,\varepsilon_0)\subseteq
  B(\lambda,\varepsilon)$. As $R(\mu_0,A)\succ 0$ it follows from
  \eqref{form_series_repr_res_on_right_real_line} that $R(\mu,A)\succ 0$
  for all $\mu\in(\mu_0-\varepsilon_0,\mu_0)$. By choice of $\mu_0$ and
  $\varepsilon$ the interval $(\mu_0-\varepsilon_0,\mu_0)$ is a
  neighbourhood of $\lambda$, showing that $U$ is open in
  $(\lambda_0,\lambda_1)$. Thus, $U=(\lambda_0,\lambda_1)$ as claimed.
\end{proof}
A consequence of Proposition~\ref{prop_evtl_pos_res_at_one_point_on_c_k}
is the following simple but useful criterion for eventual positivity of
resolvents, which we will use in
Section~\ref{section_applications_on_c_k} to study the square of the
Robin Laplacian.
\begin{proposition}
  \label{prop_square_of_resolvent_pos_operator_c_k}
  Let $A$ be a closed and real operator on $E=C(K)$. Let $\lambda_0 < 0$
  be $-\infty$ or a spectral value of $A$ and assume that $(\lambda_0,0]
  \subseteq\rho(A)$.  Furthermore, suppose that there is a closed
  operator $B\colon C(K) \supseteq D(B) \to C(K)$ such that
  $A=(iB)^2=-B^2$. If $R(0,B)$ is (strongly) positive, then $R(\phdot,
  A)$ is uniformly eventually (strongly) positive at $\lambda_0$.
\end{proposition}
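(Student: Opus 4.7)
The plan is to reduce this immediately to Proposition~\ref{prop_evtl_pos_res_at_one_point_on_c_k} by identifying a concrete choice of $\lambda_1$ at which $R(\lambda_1,A)$ is already (strongly) positive. The natural candidate is $\lambda_1 = 0$, since this is the endpoint of the interval $(\lambda_0, 0]$ that we are told lies in $\rho(A)$, and since the hypothesis gives us positivity information about $B$ precisely at $0$.

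First I would observe that, because $R(0,B)$ is assumed to exist, we have $0\in\varrho(B)$ and so $B\colon D(B)\to E$ is a bijection with bounded inverse $B^{-1} = -R(0,B)$. The identity $A=-B^2$ (together with $0\in\varrho(A)$) then yields
\begin{equation*}
    R(0,A) \;=\; -A^{-1} \;=\; (B^2)^{-1} \;=\; (B^{-1})^2 \;=\; R(0,B)^2,
\end{equation*}
with the middle equality being a straightforward domain computation (for every $y\in E$ the element $x = R(0,B)^2 y$ lies in $D(B^2) = D(A)$ and satisfies $B^2 x = y$). Hence, if $R(0,B)\ge 0$, then $R(0,A)=R(0,B)^2$ is the product of two positive operators and is itself positive; moreover if $R(0,B)\gg 0$, then applying $R(0,B)$ twice to any $f>0$ first produces an element $\gg 0$ (in particular $>0$) and then another element $\gg 0$, so $R(0,A)\gg 0$.

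With this in hand, I would apply Proposition~\ref{prop_evtl_pos_res_at_one_point_on_c_k} with the choice $\lambda_1 := 0$. The hypothesis $(\lambda_0, 0]\subseteq\varrho(A)$ is exactly the interval condition required there, and we have just shown $R(\lambda_1,A)\ge 0$ in the positive case and $R(\lambda_1,A)\gg 0$ (so $R(\lambda_1,A)^n$ is strongly positive for $n=1$) in the strongly positive case. Part~(i) of that proposition then yields uniform eventual positivity of $R(\phdot,A)$ at $\lambda_0$, and part~(ii) yields uniform eventual strong positivity.

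There is really no obstacle here; the only thing to be slightly careful about is the identification $R(0,A)=R(0,B)^2$ as a statement about unbounded operators, which needs the standard check that $R(0,B)$ maps $E$ into $D(B)$ and that $R(0,B)^2$ consequently maps $E$ into $D(B^2)=D(A)$. Everything else is a direct citation of Proposition~\ref{prop_evtl_pos_res_at_one_point_on_c_k}.
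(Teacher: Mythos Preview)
Your proof is correct and follows exactly the paper's approach: the paper's proof is the single line ``Both assertions follow from Proposition~\ref{prop_evtl_pos_res_at_one_point_on_c_k} since $R(0, A) = R(0,B)^2$,'' and you have simply spelled out the details of this reduction, including the domain check for the identity $R(0,A)=R(0,B)^2$ and the observation that $R(0,B)\gg 0$ forces $R(0,A)\gg 0$.
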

\begin{proof}
  Both assertions follow from
  Proposition~\ref{prop_evtl_pos_res_at_one_point_on_c_k} since $R(0, A)
  = R(0,B)^2$.
\end{proof}
We now formulate the main result of this section and characterise
eventually positive resolvents by means of positive projections. In
conjunction with Proposition~\ref{prop:projections} the following
theorem not only contains a Perron--Frobenius (or Krein--Rutman) type
theorem but also its converse.

\begin{theorem}
  \label{thm:evpos-resolvents-c_k}
  Let $A$ be a closed, densely defined and real operator on $E = C(K)$.
  Suppose that $\lambda_0\in\mathbb R$ is an eigenvalue of $A$ and a
  pole of the resolvent $R(\phdot,A)$. Let $P$ be the corresponding
  spectral projection. Then the following assertions are equivalent.
  \begin{enumerate}[\upshape (i)]
  \item $P\gg 0$.
  \item The resolvent $R(\phdot,A)$ is individually eventually strongly
    positive at $\lambda_0$.
  \end{enumerate}
  If $\lambda_0 = \spb(A)$, then {\upshape (i)} and {\upshape (ii)} are
  also equivalent to the following assertions.
  \begin{enumerate}[\upshape (i)]
    \setcounter{enumi}{2}
  \item For every $\lambda > \spb(A)$ and every $f > 0$ there exists
    $n_0 \in \bbN$ such that $R(\lambda,A)^n f \gg 0$ for all $n \geq
    n_0$.
  \item There exists $\lambda>\spb(A)$ such that for every $f > 0$ there
    exists $n_0 \in \bbN$ such that $R(\lambda,A)^n f \gg 0$ for all $n
    \geq n_0$.
  \end{enumerate}
\end{theorem}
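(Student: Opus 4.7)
The plan splits into (i) $\Leftrightarrow$ (ii) (valid for any pole $\lambda_0$) and, under the extra assumption $\lambda_0 = \spb(A)$, the cycle (i) $\Rightarrow$ (iii) $\Rightarrow$ (iv) $\Rightarrow$ (i). The forward implication (i) $\Rightarrow$ (ii) is quick: Proposition~\ref{prop:projections} makes $\lambda_0$ a simple pole, so the Laurent expansion reduces to $R(\lambda, A)f = Pf/(\lambda - \lambda_0) + h_f(\lambda)$ with $h_f$ analytic at $\lambda_0$. For $f > 0$, $Pf \gg 0$ supplies a lower bound $Pf \geq \beta(f)\one$ that dominates the bounded remainder once $\lambda$ is close enough to $\lambda_0$, forcing $R(\lambda, A)f \gg 0$.

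For (ii) $\Rightarrow$ (i), I verify Proposition~\ref{prop:projections}(ii). Let $m$ be the order of the pole. The limit $(\lambda - \lambda_0)^m R(\lambda, A) f \to U^{m-1} f$ and closedness of $E_+$ give $U^{m-1} \geq 0$, so any $y \geq 0$ with $u := U^{m-1}y \neq 0$ supplies a positive eigenvector in $\ker(\lambda_0 I - A)$. The identity $R(\lambda, A)u = u/(\lambda - \lambda_0)$ together with (ii) forces every positive eigenvector to be strongly positive, which in turn yields geometric simplicity: two linearly independent real eigenvectors would admit a real combination that is positive but vanishes somewhere. For algebraic simplicity ($m = 1$), suppose $m \geq 2$ and set $v := U^{m-2}y$, so that $(A - \lambda_0 I)v = u$; then $f := cu - v$ is strongly positive for $c$ large, but a direct computation yields
\[
R(\lambda, A) f = \frac{(cu - v)(\lambda - \lambda_0) - u}{(\lambda - \lambda_0)^2},
\]
whose numerator approaches $-u \ll 0$ as $\lambda \downarrow \lambda_0$, so $R(\lambda, A) f \ll 0$ close to $\lambda_0$, contradicting (ii). Thus $m = 1$, $\im P = \langle u \rangle$, and $P = u \otimes \varphi$ with $\varphi \in \ker(\lambda_0 I - A')$. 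Positivity of $\varphi$ follows from $P \geq 0$. Finally, if $\varphi(f_0) = 0$ for some $f_0 > 0$, the duality relation $\varphi(R(\lambda, A) f_0) = \varphi(f_0)/(\lambda - \lambda_0) = 0$ contradicts the fact that $R(\lambda, A)f_0 \gg 0$ and $\varphi > 0$ together force $\varphi(R(\lambda, A)f_0) > 0$; hence $\varphi \gg 0$, and Proposition~\ref{prop:projections}(ii) delivers $P \gg 0$.

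For the extra equivalences under $\lambda_0 = \spb(A)$: since $\lambda_0$ is the unique $\mu \in \sigma(A)$ achieving $|\lambda_* - \mu| = \lambda_* - \spb(A)$ for any real $\lambda_* > \spb(A)$, and (by Proposition~\ref{prop:projections}) is a simple pole, a standard spectral-radius argument gives $(\lambda_* - \spb(A))^n R(\lambda_*, A)^n \to P$ in operator norm. For $f > 0$, $Pf \gg 0$ swamps the $o(1)$ error for $n$ large, proving (i) $\Rightarrow$ (iii); (iii) $\Rightarrow$ (iv) is trivial. For (iv) $\Rightarrow$ (i), the argument mirrors (ii) $\Rightarrow$ (i) with resolvent powers: positivity of $U^{m-1}$ from rescaling $R(\lambda_*, A)^n$ by the dominant binomial $\binom{n+m-2}{m-1}$, strong positivity of positive eigenvectors from $R(\lambda_*, A)^n u = u/(\lambda_* - \lambda_0)^n$, the $m = 1$ step from $R(\lambda_*, A)^n(cu - v) \sim -nu/(\lambda_* - \lambda_0)^{n+1} \ll 0$, and $\varphi \gg 0$ from the same duality $\varphi(R(\lambda_*, A)^n f_0) = \varphi(f_0)/(\lambda_* - \lambda_0)^n$.

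The main obstacle is the pair of contradictions that secure $m = 1$ and $\varphi \gg 0$. Both rely on strong positivity in $C(K)$ being a pointwise-uniform condition, against which a single well-chosen test element (the real combination $cu - v$, or the annihilated positive $f_0$ paired with its eigenfunctional) can be pitted; these are the steps where individual eventual positivity genuinely feeds into Perron--Frobenius structure.
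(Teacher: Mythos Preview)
Your proof is correct, but it proceeds along a genuinely different route from the paper's.

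For (ii) $\Rightarrow$ (i) and (iv) $\Rightarrow$ (i), the paper funnels both arguments through a single abstract tool, Lemma~\ref{lem_evtl_pos_family_of_op_eigenvectors}: it first extracts a positive eigenvector (via $U^{m-1}\ge 0$ or Lemma~\ref{lem_powers_of_operator_positive_eigenvector}), then applies part~(i) of that lemma to the family $\bigl(\lambda R(\lambda,A)\bigr)_{\lambda}$, respectively $\bigl([\lambda R(\lambda,A)]^n\bigr)_n$, to force boundedness and hence a simple pole (Lemma~\ref{lem_convergence_of_resolvent_to_spectral_projection}), and finally part~(ii) of the same lemma to upgrade $P>0$ to $P\gg 0$. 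You instead work directly with Jordan data: the explicit formula
\[
R(\lambda,A)(cu-v)=\frac{(cu-v)(\lambda-\lambda_0)-u}{(\lambda-\lambda_0)^2}
\]
(and its power analogue) kills $m\ge 2$ by exhibiting a positive vector whose resolvent orbit is eventually $\ll 0$; the duality identity $\varphi\bigl(R(\lambda,A)f_0\bigr)=\varphi(f_0)/(\lambda-\lambda_0)$ then forces $\varphi\gg 0$. This bypasses Lemmas~\ref{lem_convergence_of_resolvent_to_spectral_projection}--\ref{lem_evtl_pos_family_of_op_eigenvectors} entirely and is more elementary, at the cost of several hand computations that the paper's abstract lemma absorbs uniformly. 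Two small points you leave implicit but should state: that $P\ge 0$ (needed before ``positivity of $\varphi$ follows from $P\ge 0$'') comes from $Pf=\lim_{\lambda\downarrow\lambda_0}(\lambda-\lambda_0)R(\lambda,A)f\ge 0$ once $m=1$ is known; and in the (iv) $\Rightarrow$ (i) sketch, geometric simplicity (needed for $P=u\otimes\varphi$) is obtained by the same linear-combination argument you used under~(ii).
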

\begin{remark}
  \label{rem:eigenvalue-shift}
  Let the assumptions of Theorem~\ref{thm:evpos-resolvents-c_k} be
  satisfied, and suppose that the spectral projection $P$ associated
  with $\lambda_0$ is strongly positive. 
  Theorem~\ref{thm:evpos-resolvents-c_k} shows that
  $R(\phdot,A)$ is individually eventually strongly positive at
  $\lambda_0$. We show that $\lambda_0$ can be anywhere on the real
  axis, independently of the spectral bound $\spb(A)$. For
  $\mu\in\mathbb R$ we set $A_\mu:=A-\mu P$. Then $\sigma(A_\mu)
  =\bigl(\sigma(A)\setminus\{\lambda_0\}\bigr)\cup\{\lambda_0-\mu\}$. Moreover,
  if $\lambda_0 - \mu \not\in \sigma(A)$, then the spectral projection
  associated with $\lambda_0-\mu$ is still $P$ and hence
  $R(\phdot,A_\mu)$ is individually eventually strongly positive at
  $\lambda_0-\mu$.

  A similar argument can also be used to show that $\spb(A)$ does not
  need to be a spectral value of $A$ even if $R(\lambda,A)\geq 0$ in
  some right neighbourhood of $\spb(A)$. We refer to
  Remark~\ref{rem:sg-rot}(b) for details.
\end{remark}
Assertions (iii) and (iv) in Theorem~\ref{thm:evpos-resolvents-c_k} give
conditions on large powers of the resolvent. In fact, it is sufficient
to consider a single power, provided that this power is a strongly
positive operator.

\begin{proposition}
  \label{prop_a_single_power_of_res_positive}
  Let $A$ be a closed, densely defined and real operator on $E =
  C(K)$. Suppose that $\spb(A)\in\mathbb R$ is an eigenvalue of $A$ and
  a pole of the resolvent $R(\phdot,A)$, and that there exist $n\in\bbN$
  and $\lambda_1>\spb(A)$ such that $R(\lambda_1,A)^n\gg 0$. Then the
  equivalent statements of Theorem~\ref{thm:evpos-resolvents-c_k} are
  fulfilled for $\lambda_0 = \spb(A)$.
\end{proposition}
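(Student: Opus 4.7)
The plan is to establish condition (i) of Theorem~\ref{thm:evpos-resolvents-c_k}, namely $P \gg 0$, from which the remaining equivalent statements follow by that theorem. The strategy is to apply the classical Perron--Frobenius (Krein--Rutman) theorem to the bounded strongly positive operator $T := R(\lambda_1,A)^n$ and translate the conclusion back to $A$ via the spectral mapping theorem.

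First I would carry out the spectral mapping step: writing $\mu = a + ib \in \sigma(A)$ with $a \le \spb(A)$, one has $|\lambda_1 - \mu|^2 = (\lambda_1 - a)^2 + b^2 \ge (\lambda_1 - \spb(A))^2$ with equality precisely when $\mu = \spb(A)$. Hence $(\lambda_1 - \spb(A))^{-1}$ is the unique spectral value of $R(\lambda_1,A)$ of maximum modulus. Taking $n$th powers, $r(T) = (\lambda_1 - \spb(A))^{-n}$ is the unique spectral value of $T$ of maximum modulus, and the only spectral value $\zeta$ of $R(\lambda_1,A)$ with $\zeta^n = r(T)$ is $(\lambda_1 - \spb(A))^{-1}$. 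Together with the hypothesis that $\spb(A)$ is a pole of $R(\phdot,A)$, this yields, by a standard functional calculus argument, that $r(T)$ is a pole of $R(\phdot,T)$ whose associated spectral projection coincides with $P$.

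Since $T$ is strongly positive, hence irreducible, and $r(T)$ is a pole of $R(\phdot,T)$, the Krein--Rutman theorem (see, e.g., \cite[Theorems~V.5.2 and~V.5.4]{Schaefer1974}) supplies algebraic simplicity of $r(T)$, a strongly positive eigenvector of $T$ at $r(T)$, and a positive eigenfunctional $\varphi$ of $T'$ at $r(T)$. To upgrade $\varphi$ to strong positivity I take any $f > 0$; since $Tf \gg 0$ there exists $\beta > 0$ with $Tf \ge \beta \one$, and hence $r(T)\varphi(f) = \varphi(Tf) \ge \beta \varphi(\one) > 0$, because $\varphi(\one) > 0$ (otherwise the positive functional $\varphi$ would vanish on the order interval $[-\one,\one]$, and hence on all of $E$). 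With strongly positive eigenvectors of $T$ and $T'$ and geometric simplicity of $r(T)$ in hand, Proposition~\ref{prop:projections}, applied to $T$ at $r(T)$ via the implication (ii)~$\Rightarrow$~(i), yields $P \gg 0$. The main obstacle I anticipate is the spectral mapping bookkeeping of the second paragraph, in particular the identification of the spectral projection of $T$ at $r(T)$ with $P$; the subsequent Perron--Frobenius step is standard.
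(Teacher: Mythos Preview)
Your proof is correct and takes a genuinely different route from the paper's. The paper argues via its own machinery: it finds a positive fixed vector of $T$ (acknowledging that Perron--Frobenius would also work for this step), then uses Lemma~\ref{lem_evtl_pos_family_of_op_eigenvectors}(i) to obtain power-boundedness of $T$, from which Lemma~\ref{lem_convergence_of_resolvent_to_spectral_projection}(i) yields that $\spb(A)$ is a simple pole; then Lemma~\ref{lem_convergence_of_resolvent_to_spectral_projection}(ii) gives $[\lambda_1 R(\lambda_1,A)]^{kn} \to P$, whence $P > 0$, and finally Lemma~\ref{lem_evtl_pos_family_of_op_eigenvectors}(ii) upgrades this to $P \gg 0$. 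Your argument instead exploits directly that $T \gg 0$ is an irreducible positive operator, invokes the classical Krein--Rutman theorem to obtain simplicity and strongly positive eigenvectors of $T$ and $T'$ at $\spr(T)$, and then transfers this to $A$ via spectral mapping and Proposition~\ref{prop:projections}.

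The only point requiring care in your approach is the identification of the spectral projection of $T$ at $\spr(T)$ with $P$, which you flag. This works because $\spb(A)$ is isolated in $\sigma(A)$: the decomposition $E = \im P \oplus \ker P$ reduces $T$, one has $\sigma(T|_{\im P}) = \{\spr(T)\}$, and a short compactness argument using the isolation of $\spb(A)$ shows $\spr(T|_{\ker P}) < \spr(T)$. What your approach buys is directness: since the hypothesis $T \gg 0$ is much stronger than mere individual eventual positivity, classical Perron--Frobenius applies immediately and the internal lemmas are not needed. What the paper's approach buys is uniformity of method with the proof of Theorem~\ref{thm:evpos-resolvents-c_k}, illustrating that the lemmas developed for the genuinely eventual case also handle this special situation without appeal to external theory.
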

  
The reader might compare the above proposition with
Proposition~\ref{prop_evtl_pos_res_at_one_point_on_c_k}. If, in addition
to the assumptions of
Proposition~\ref{prop_a_single_power_of_res_positive}, the operator
$R(\lambda_1,A)$ is positive, then $R(\phdot,A)$ is \emph{uniformly}
eventually strongly positive.

The remainder of this section is devoted to the proofs of
Theorem~\ref{thm:evpos-resolvents-c_k} and
Proposition~\ref{prop_a_single_power_of_res_positive}. We can assume
without loss of generality that $\lambda_0=0$ by replacing $A$ with
$A-\lambda_0 I$.

The first step towards the proof of
Theorem~\ref{thm:evpos-resolvents-c_k} is to express the spectral
projection $P$ in terms of the resolvent.
\begin{lemma}
  \label{lem_convergence_of_resolvent_to_spectral_projection}
  Let $A$ be a closed operator on a complex Banach space $E$ and assume
  that $0$ is an eigenvalue of $A$ and a pole of the resolvent
  $R(\phdot,A)$. Denote by $P$ the corresponding spectral projection.
  \begin{enumerate}[\upshape (i)]
  \item If $\lambda > 0$ is contained in $\rho(A)$ and if the operator
    family $\bigl([\lambda R(\lambda,A)]^n\bigr)_{n \in \bbN}$ is
    bounded, then $0$ is a simple pole of the resolvent.
  \item Suppose in addition that $0 = \spb(A)$. If $\lambda>0$ and
    $\spb(A) = 0$ is a simple pole of the resolvent, then
    \begin{equation}
      \label{from_res_powers_converge_to_spec_proj}
      \lim_{n \to \infty} [\lambda R(\lambda,A)]^n = P
    \end{equation}
    in $\mathcal L(E)$.
  \end{enumerate}
\end{lemma}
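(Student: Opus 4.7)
The plan is to split the analysis onto the two $R(\lambda, A)$-invariant complementary subspaces $\im P$ and $\ker P$. Since $0$ is an isolated spectral value, the decomposition $E = \im P \oplus \ker P$ is topological; the restriction $A|_{\im P}$ coincides with the nilpotent part $U$ of order $m$ (the pole order) from the Laurent expansion in Remark~\ref{rem:Laurent-exp}, while $A_0 := A|_{\ker P}$ satisfies $\sigma(A_0) = \sigma(A) \setminus \{0\}$.

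For (i), I fix $\lambda > 0$ in $\rho(A)$ and use that on $\im P$ the nilpotency $U^m = 0$ turns the Neumann series into a finite sum:
\begin{equation*}
  [\lambda R(\lambda, A)]^n = \left(I - \frac{U}{\lambda}\right)^{-n} = \sum_{k=0}^{m-1} \binom{n+k-1}{k} \frac{U^k}{\lambda^k} \quad \text{on } \im P.
\end{equation*}
Suppose for contradiction that $m \geq 2$. I would pick $x \in \im P$ with $U^{m-1} x \neq 0$. The vectors $x, Ux, \ldots, U^{m-1}x$ then form a Jordan chain, hence are linearly independent, and Hahn--Banach produces $\varphi \in E'$ with $\varphi(U^{m-1} x) = 1$ and $\varphi(U^k x) = 0$ for $0 \leq k \leq m-2$. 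Evaluating $\varphi$ on the displayed identity applied to $x$ gives $\varphi([\lambda R(\lambda, A)]^n x) = \binom{n+m-2}{m-1} \lambda^{-(m-1)} \to \infty$, contradicting the assumed operator-norm boundedness of the family. Hence $m = 1$.

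For (ii), the simple pole hypothesis forces $U = 0$, so $[\lambda R(\lambda, A)]^n = I = P$ on $\im P$ for every $n$. On the complement I set $T_0 := \lambda R(\lambda, A_0)$ and apply the spectral mapping theorem for resolvents:
\begin{equation*}
  \spr(T_0) = \sup_{\mu \in \sigma(A_0)} \frac{\lambda}{|\lambda - \mu|}.
\end{equation*}
Since $\spb(A) = 0$ forces $\repart \mu \leq 0$, and since $0$ being a pole makes it isolated in $\sigma(A)$ so that $r := \dist(0, \sigma(A) \setminus \{0\}) > 0$, writing $\mu = -\varepsilon + i\beta$ with $\varepsilon \geq 0$ and $\varepsilon^2 + \beta^2 \geq r^2$ gives $|\lambda - \mu|^2 = (\lambda + \varepsilon)^2 + \beta^2 \geq \lambda^2 + r^2$. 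Therefore $\spr(T_0) \leq \lambda/\sqrt{\lambda^2 + r^2} < 1$, and Gelfand's formula yields $\|T_0^n\|_{\calL(\ker P)} \to 0$. Combining the two pieces gives $[\lambda R(\lambda, A)]^n \to P$ in $\calL(E)$.

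The main obstacle is the step in (i) of promoting polynomial scalar growth $\binom{n+m-2}{m-1} \sim n^{m-1}$ to genuine operator-norm blow-up: because $\im P$ is not assumed finite-dimensional, one cannot just read off the growth from a matrix representation, which is why the Jordan-chain-plus-Hahn--Banach trick is needed. Part (ii) becomes routine once the spectral gap $r$ has been extracted from the isolation of the pole.
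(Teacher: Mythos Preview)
Your argument is correct and follows the same overall strategy as the paper: decompose along $E=\im P\oplus\ker P$, obtain a contradiction on $\im P$ when $m\ge 2$, and show $\spr\bigl(\lambda R(\lambda,A)|_{\ker P}\bigr)<1$ via the spectral mapping theorem for resolvents in part~(ii).

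The only notable difference is in the execution of~(i). Where you expand $(I-U/\lambda)^{-n}$ as a full negative-binomial series and then invoke Hahn--Banach to isolate the top-order coefficient, the paper works directly with $T:=\lambda R(\lambda,A)$: if $m\ge 2$ there exists $x\in\ker(I-T)^2\setminus\ker(I-T)$, and a one-line induction gives $T^nx=x-n(I-T)x$, which is unbounded since $(I-T)x\neq 0$. This bypasses both the Neumann expansion and the linear-independence/Hahn--Banach step. Your approach has the merit of exhibiting the exact polynomial growth rate $n^{m-1}$; the paper's is shorter because a single Jordan block of length~$2$ already suffices to contradict power-boundedness. In part~(ii) your explicit estimate $\spr(T_0)\le\lambda/\sqrt{\lambda^2+r^2}$ is a pleasant quantitative sharpening of the paper's qualitative M\"obius-transformation argument.
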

\begin{proof}
  (i) We give a proof by contrapositive. Suppose that $0$ is a pole of
  order $m \ge 2$ of $R(\phdot,A)$. If we set $T:=\lambda R(\lambda,A)$,
  then $1$ is a pole of order $m$ of $R(\phdot,T)$; see
  \cite[Proposition~IV.1.18]{Engel2000}. As $m \ge 2$ there is a
  generalised eigenvector $x \in \ker( I-T)^2 \setminus \ker( I-T)$; see
  Remark~\ref{rem:Laurent-exp}.  A short induction argument now shows
  that $T^nx = x-n( I-T)x$ for each $n \in \bbN_0$.  As $( I-T)x \neq
  0$, this implies that $(T^n)_{n \in \bbN}$ is unbounded.
  
  (ii) By the spectral mapping theorem for resolvents, we have
  \begin{displaymath}
    \sigma\bigl(\lambda R(\lambda,A)\bigr) \setminus \{0\}
    =\frac{\lambda}{\lambda-\sigma(A)}
  \end{displaymath}
  for all $\lambda>0$; see \cite[Theorem~III-6.15]{Kato1976}. The map
  $\mu\to\dfrac{\lambda}{\lambda-\mu}$ is a M\"obius transformation
  mapping the left half plane onto the disc $B_{1/2}(1/2)$. As $1$ is an
  isolated point of the spectrum of $\lambda R(\lambda,A)$, there exists
  $c\in(0,1)$ such that $|\mu|\leq c$ for all $\mu\in
  \sigma\bigl(\lambda R(\lambda,A)\bigr)\setminus\{1\}$. In particular,
  \begin{equation}
    \label{from_spectral_radius_for_normalised_resolvent}
    \spr\bigl(\lambda R(\lambda,A)(I-P)\bigr)\leq c<1.
  \end{equation}
  Since $\spb(A) = 0$ is a first order pole of the resolvent we have
  $\im P=\ker A$ and thus $\lambda R(\lambda,A)P = P$. As $\im
  P\oplus\ker P$ completely reduces $\lambda R(\lambda,A)$, we obtain
  \begin{displaymath}
    \bigl[\lambda R(\lambda,A)\bigr]^n =\bigl[\lambda
    R(\lambda,A)P\bigr]^n+
    \bigl[\lambda R(\lambda,A)(I-P)\bigr]^n
    = P + \bigl[\lambda R(\lambda,A)(I-P)\bigr]^n.
  \end{displaymath}
  Hence $\bigl[\lambda R(\lambda,A)\bigr]^n \to P$ as $n \to \infty$ due
  to \eqref{from_spectral_radius_for_normalised_resolvent}.
\end{proof}
\begin{lemma}
  \label{lem_powers_of_operator_positive_eigenvector}
  Let $T \in \calL(E)$ be an operator on a complex Banach lattice $E$
  with spectral radius $\spr(T) = 1$. Suppose that for every $x\geq 0$,
  there exists $n_0\in\bbN$ such that $T^nx \geq 0$ for all $n \geq
  n_0$. If $1$ is a pole of the resolvent $R(\phdot, T)$, then the
  eigenspace $\ker( I - T)$ contains a positive, non-zero element.
\end{lemma}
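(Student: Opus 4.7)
My plan is to extract a positive fixed vector by matching two representations of the resolvent: the Laurent expansion around the pole $\lambda_0 = 1$, which exposes a canonical object in $\ker(I-T)$, and the Neumann series valid for $\lambda > \spr(T) = 1$, which lets the eventual positivity hypothesis inject positivity. Let $m \geq 1$ be the order of the pole, and let $P$ and $U$ be as in Remark~\ref{rem:Laurent-exp} (with $\lambda_0 = 1$), so that $U^{m-1} \neq 0$, $U^m = 0$, and $\im(U^{m-1}) \subseteq \ker(I-T)$. Reading off the Laurent expansion gives the key limit $(\lambda - 1)^m R(\lambda, T) \to U^{m-1}$ in $\calL(E)$ as $\lambda \to 1^+$.

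Because $\spr(T) = 1$, for every $\lambda > 1$ the Neumann series $R(\lambda, T) = \sum_{n=0}^\infty \lambda^{-n-1} T^n$ converges absolutely in $\calL(E)$. Since $E$ is the complexification of the real Banach lattice $E_\bbR$ and every real element is a difference of elements of $E_+$, the complex span of $E_+$ equals $E$; therefore $U^{m-1} \neq 0$ forces the existence of some $x \in E_+$ with $U^{m-1} x \neq 0$. Fix such an $x$, pick $n_0 \in \bbN$ with $T^n x \in E_+$ for all $n \geq n_0$, and split
\[
(\lambda - 1)^m R(\lambda, T) x = (\lambda - 1)^m \sum_{n=0}^{n_0 - 1} \lambda^{-n-1} T^n x + (\lambda - 1)^m \sum_{n=n_0}^\infty \lambda^{-n-1} T^n x.
\]

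As $\lambda \to 1^+$, the head is a finite sum killed by the prefactor $(\lambda - 1)^m$, while the tail lies in $E_+$ for every $\lambda > 1$ (positive scalars times positive vectors, summed absolutely). Combined with the Laurent limit from the first paragraph, the tail must converge to $U^{m-1} x$, and closedness of $E_+$ yields $U^{m-1} x \in E_+$; since $U^{m-1} x \neq 0$ and $\im(U^{m-1}) \subseteq \ker(I-T)$, this furnishes the desired positive, non-zero fixed vector. The main non-routine step is coordinating the two representations of $R(\phdot, T)$: the Laurent expansion is a local statement at $\lambda = 1$ while the Neumann series lives strictly to the right of the pole, and the argument works precisely because $(\lambda-1)^m R(\lambda, T)$ stays bounded as $\lambda \to 1^+$, which is what allows the finite head of the Neumann series to be discarded and the manifestly positive tail to dictate the limit.
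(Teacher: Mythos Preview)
Your proof is correct and follows essentially the same route as the paper: both combine the Laurent limit $(\lambda-1)^m R(\lambda,T)\to U^{m-1}$ with the Neumann series, discard the finite head via the prefactor $(\lambda-1)^m$, and use closedness of $E_+$ to conclude that $U^{m-1}$ produces a positive eigenvector. The only cosmetic difference is that the paper shows $U^{m-1}\ge 0$ as an operator for every $x\ge 0$ and then invokes $U^{m-1}\neq 0$, whereas you first select an $x\in E_+$ with $U^{m-1}x\neq 0$ and then verify positivity of that particular image.
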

\begin{proof}
  Let $m$ be the order of $1$ as a pole of $R(\phdot, T)$. From the
  Laurent expansion \eqref{eq:Laurent-exp} we have that
  \begin{displaymath}
    \lim_{\lambda \downarrow 1}(\lambda-1)^mR(\lambda,T)=U^{m-1}
  \end{displaymath}
  in $\calL(E)$.  Now, let $\lambda>1$ and $0\leq x\in E$.  Then there
  exists $n_0\in\mathbb N$ such that $T^nx\geq 0$ for all $n\geq
  n_0$. Hence we obtain from the Neumann series representation of the
  resolvent that
  \begin{displaymath}
    R(\lambda,T)x
    =\sum_{k=0}^\infty\frac{T^k}{\lambda^{k+1}}x
    \geq \sum_{k=0}^{n_0-1}\frac{T^k}{\lambda^{k+1}}x
  \end{displaymath}
  whenever $|\lambda| > 1$. In particular,
  \begin{displaymath}
    U^{m-1}x
    =\lim_{\lambda \downarrow 1} (\lambda - 1)^m R(\lambda,A)x
    \geq\lim_{\lambda \downarrow 1} (\lambda - 1)^m
    \sum_{k=0}^{n_0-1}\frac{T^k}{\lambda^{k+1}}x
    =0 \text{.}
  \end{displaymath}
  Hence $U^{m-1}$ is a positive operator. By
  Remark~\ref{rem:Laurent-exp} $\im(U^{m-1})$ is non-trivial and
  consists of eigenvalues of $T$. As $U^{m-1}$ is positive there exists
  a positive eigenvector.
\end{proof}
The above lemma suggests that it could be interesting to develop a 
theory of ``eventually positive'' or ``power-positive'' operators
similar to \cite{Noutsos2006} in infinite dimensions. However, since we
are concerned with semigroups we will only use
Lemma~\ref{lem_powers_of_operator_positive_eigenvector} as a technical
tool for the proof of Theorem~\ref{thm:evpos-resolvents-c_k}.

In order to prove Theorem~\ref{thm:evpos-resolvents-c_k}, one might be
tempted to try to use the classical Perron--Frobenius theory, assuming
that $R(\lambda,A)\gg 0$ for some $\lambda>0$. However, here we only
assume that the resolvent is \emph{individually} eventually positive, so
we need to consider properties of families of operators having a certain
weaker pointwise eventual positivity property.
\begin{lemma}
  \label{lem_evtl_pos_family_of_op_eigenvectors}
  Let $E=C(K)$. Let $(J,\preceq)$ be a non-empty totally ordered set and
  let $\calT:=(T_j)_{j \in J}$ be a family in $\calL(E)$ with fixed
  space
  \begin{displaymath}
    F:=\{x \in E\colon\text{$T_jx = x$ for all $j \in J$}\}.
  \end{displaymath}
  Assume that for every $x>0$ there exists $j_x \in J$ such that $T_j x
  \gg 0$ for all $j \succeq j_x$.
  \begin{enumerate}[\upshape (i)]
  \item If the family $(T_j)_{j \preceq j_0}$ is bounded in $\calL(E)$
    for every $j_0\in J$ and $F$ contains an element $x_0>0$, then
    $\calT$ is bounded in $\calL(E)$.
  \item Let $P>0$ be a projection on $E$. If every $T \in \calT$ leaves
    $\ker(P)$ invariant and $\im(P)\subseteq F$, then $P\gg 0$.
  \end{enumerate}
\end{lemma}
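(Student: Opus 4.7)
The plan is to prove (i) by verifying the Banach--Steinhaus hypothesis pointwise, and (ii) by a short direct argument exploiting that $Px$ itself lies in the fixed space $F$.

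For (i), I would first observe that applying the eventual strong positivity hypothesis to $x_0$ itself upgrades it to $x_0 \gg 0$: since $x_0 \in F$ we have $T_j x_0 = x_0$ for all $j$, so $x_0 = T_{j_{x_0}} x_0 \gg 0$. To bound $\calT$ in $\calL(E)$ it suffices, by Banach--Steinhaus, to show $\sup_{j\in J}\|T_j x\|<\infty$ for each $x\in E$; decomposing $x$ into real and imaginary then into positive and negative parts reduces this to $x>0$. Choose $\alpha>0$ with $\alpha x_0\geq x$ (possible since $x_0\gg 0$). If $x=\alpha x_0$ then $x\in F$ and $\|T_j x\|=\|x\|$; otherwise $\alpha x_0-x>0$ and the hypothesis applied to this element gives $j^*\in J$ with
\[
\alpha x_0 - T_j x \;=\; T_j(\alpha x_0-x)\gg 0\qquad\text{for all }j\succeq j^*,
\]
so $T_j x$ is real-valued and $T_j x\leq\alpha x_0\leq\alpha\|x_0\|\one$. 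Combined with $T_j x\geq 0$ for $j\succeq j_x$, this yields $\|T_j x\|\leq\alpha\|x_0\|$ for $j$ beyond the larger of $j_x$ and $j^*$; the local boundedness assumption supplies a bound on the initial segment below this index.

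For (ii), given $x>0$ I first rule out $Px=0$. Otherwise $x\in\ker P$, which every $T_j$ preserves, so $T_j x\in\ker P$ for all $j$; by the hypothesis some $T_j x$ is $\gg 0$, placing an element $f\geq\beta\one$ in $\ker P$. For any $g\in E_{\bbR}$ we have $\alpha f\pm g\geq 0$ for all sufficiently large $\alpha>0$, and positivity of $P$ gives $\pm Pg=\alpha Pf\pm Pg\geq 0$, hence $Pg=0$; by $\bbC$-linearity $P=0$, contradicting $P>0$. Hence $Px>0$. The finishing move is that $Px\in\im(P)\subseteq F$, so $T_j(Px)=Px$ for every $j$; applying the hypothesis to $Px>0$ yields $j_{Px}$ with
\[
Px \;=\; T_j(Px) \gg 0 \qquad\text{for all }j\succeq j_{Px},
\]
and since $x>0$ was arbitrary, $P\gg 0$.

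The main obstacle lies in (i): the order sandwich $0\leq T_j x\leq\alpha x_0$ requires $x_0$ to be strongly positive, and noticing that this comes for free by applying the hypothesis to $x_0\in F$ is the crucial observation. Part (ii) is clean once one spots the self-application $T_j(Px)=Px$; the only genuine work there is excluding $Px=0$, which is where positivity of $P$ enters.
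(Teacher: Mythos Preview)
Your proof is correct and follows essentially the same approach as the paper's: in (i) both arguments upgrade $x_0$ to $x_0\gg 0$ via $x_0\in F$, then sandwich $T_j x$ using $x_0$ and invoke Banach--Steinhaus (the paper applies the hypothesis to $cx_0\pm x$ simultaneously to get $|T_jx|\le cx_0$, which is marginally slicker than your separate treatment of $x$ and $\alpha x_0-x$). In (ii) both arguments use $Px\in F$ to promote $Px>0$ to $Px\gg 0$; your way of excluding $Px=0$ (a strongly positive element in $\ker P$ forces $P=0$) differs cosmetically from the paper's (which compares with a fixed $u=Py>0$), but the logic and length are the same.
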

\begin{proof}
  (i) It suffices to show that the orbit $(T_jx)_{j \in J}$ is bounded
  for every $x>0$, since then it follows that $(T_jx)_{j \in J} $ is
  bounded for every $x \in E$, which in turn implies (i) due to the
  uniform boundedness principle.
    
  Fix $x > 0$. By assumption there exists $x_0\in F$ such that
  $x_0>0$. Hence, $x_0=T_jx_0\gg 0$ for some $j\in J$, so in particular
  $x_0\gg 0$.  As $x_0\gg 0$ there exists $c>0$ such that $cx_0 \pm x
  \geq 0$.  Hence there exists $j_0\in J$ such that $T_j(cx_0 \pm x)
  \geq 0$ for all $j \succeq j_0$ and thus $|T_j x| \le cx_0$ for all $j
  \succeq j_0$. As $(T_j)_{j\preceq j_0}$ is bounded, this shows that
  $(T_jx)_{j\in J}$ is indeed bounded.

  (ii) If $x>0$, then by assumption $Px\in F$ and $Px \geq 0$. If
  $Px>0$, then by assumption there exists $j\in J$ so that $0\ll
  T_jPx=Px$. Hence, for every $x>0$ either $Px\gg 0$ or $Px=0$.  Let us
  now show that $Px \neq 0$ whenever $x>0$.  As $P$ is non-zero, there
  exists $y>0$ such that $u:=Py > 0$. If $x > 0$, then we find a $j \in
  J$ such that $T_jx \gg 0$, and thus $T_j x - cu \ge 0$ for some $c >
  0$. Since $P$ is positive, we conclude that $PT_j x \ge Pcu = cu >
  0$. In particular, $PT_j x \neq 0$. As $T_j$ leaves $\ker P$
  invariant, we must also have $Px \not= 0$.
\end{proof}

\begin{proof}[Proof of Theorem~\ref{thm:evpos-resolvents-c_k}] We may
  assume that $\lambda_0=0$.
  
  ``(i) $\Rightarrow$ (ii)'' As $P\gg 0$ it follows from
  Proposition~\ref{prop:projections} that $\lambda_0=0$ is a simple pole
  of $R(\phdot,A)$. Given $f>0$ it follows from (i) that $Pf\gg 0$. By
  the Laurent expansion \eqref{eq:Laurent-exp} we conclude that $\lambda
  R(\lambda,A) Pf\to Pf$ in $E$ as $\lambda \downarrow 0$.  As the
  interior of $E_+$ in $E_{\mathbb R}$ is non-empty and $A$ is real it
  follows that there exists $\lambda_0>0$ such that $R(\lambda,A)f\gg 0$
  for all $\lambda\in(0,\lambda_0]$. Hence, $R(\phdot,A)$ is
  individually eventually strongly positive at $\lambda_0$, proving
  (ii). A similar argument using
  Lemma~\ref{lem_convergence_of_resolvent_to_spectral_projection}(ii)
  shows that (i) also implies (iii) if $\lambda_0=\spb(A)=0$.

  ``(ii) $\Rightarrow$ (i)'' We first show that $0$ is a simple pole and
  an eigenvalue with eigenvector $u>0$. Assumption (ii) implies that
  there exists $\lambda_2 > 0$ with the following properties:
  $(0,\lambda_2] \subseteq\rho(A)$ and for every $f>0$ there exists
  $\lambda_1\in(0,\lambda_2]$ such that $R(\lambda,A)f\gg 0$ for all
  $\lambda\in(0,\lambda_1]$. Let $m$ be the order of $0$ as a pole of
  $R(\lambda,A)$. Using the Laurent expansion \eqref{eq:Laurent-exp} we
  see that
  \begin{displaymath}
    U^{m-1}f=\lim_{\lambda\downarrow 0}\lambda^mR(\lambda,A)f\geq 0.
  \end{displaymath}
  In particular $U^{m-1}>0$ and hence $A$ has an eigenvector $u>0$
  corresponding to $0$ by Remark~\ref{rem:Laurent-exp}. Also $\lambda
  R(\lambda,A)u=u$ for all $\lambda>0$, so $u$ is in the fixed space $F$
  of the operator family
  \begin{equation}
    \label{eq:family-res-lambda}
    \calT:=\bigl(\lambda R(\lambda, A)\bigr)_{\lambda \in (0,\lambda_2]}
  \end{equation}
  where the order $\preceq$ on $J:= (0,\lambda_2]$ is given by $\ge$.
  Clearly, the conditions of
  Lemma~\ref{lem_evtl_pos_family_of_op_eigenvectors}(i) are satisfied
  and so the family \eqref{eq:family-res-lambda} is bounded.  Therefore
  $\lambda^kR(\lambda,A)\to 0$ as $\lambda\downarrow 0$ for every $k\geq
  2$ and hence $0$ is a simple pole of $R(\phdot,A)$. From the above
  argument $U^0=P>0$. Moreover $\im P=F$ and thus
  Lemma~\ref{lem_evtl_pos_family_of_op_eigenvectors}(ii) implies (i).
  
  For the rest of the proof we assume that $\lambda_0=\spb(A)=0$.
  Obviously (iii) implies (iv).  ``(iv) $\Rightarrow$ (i)'' We proceed
  similarly as in the previous paragraph and first show that $0$ is a
  simple pole of $R(\phdot,A)$. Let $\lambda>0$ be such that for every
  $f>0$ there exists $n_0\in\mathbb N$ such that
  \begin{equation}
    \label{eq:R-power-positive}
    \bigl[R(\lambda,A)\bigr]^nf\gg 0
  \end{equation}
  for all $n\geq n_0$. The operator $\lambda R(\lambda,A)$ has spectral
  radius $1$, and $1$ is a pole of the resolvent $R(\phdot, \lambda
  R(\lambda,A))$, see \cite[Proposition~IV.1.18]{Engel2000}. By
  Lemma~\ref{lem_powers_of_operator_positive_eigenvector} $\lambda
  R(\lambda,A)$ has an eigenvector $u>0$ for the eigenvalue $1$. In
  particular, $u>0$ is in the fixed space $F$ of the family
  \begin{equation}
    \label{eq:family-res-power}
    \calT:=\big([\lambda R(\lambda,A)]^n\big)_{n \in \bbN},
  \end{equation}
  where the order $\preceq$ on $J:= \mathbb N$ is given by
  $\leq$. Clearly, the conditions of
  Lemma~\ref{lem_evtl_pos_family_of_op_eigenvectors}(i) are satisfied,
  so $\calT$ is bounded.  Now
  Lemma~\ref{lem_convergence_of_resolvent_to_spectral_projection}(i)
  implies that $0$ is a simple pole of $R(\phdot, A)$. Therefore, $\ker
  A = \im P=F$. Using that $\spb(A) = 0$ we can apply
  Lemma~\ref{lem_convergence_of_resolvent_to_spectral_projection}(ii)
  and together with \eqref{eq:R-power-positive} we hence obtain
  $Pf=\lim_{n\to\infty} \bigl[R(\lambda,A)\bigr]^nf\geq 0$ for every
  $f\geq 0$, so $P\geq 0$. Since $0\neq u\in F=\im P$ we even have $P >
  0$.  Now Lemma~\ref{lem_evtl_pos_family_of_op_eigenvectors}(ii)
  implies~(i).
\end{proof}

\begin{proof}[Proof of
  Proposition~\ref{prop_a_single_power_of_res_positive}]
  We may assume that $\lambda_0 = \spb(A) = 0$. If we set
  $T:=\bigl[\lambda R(\lambda,A)\bigr]^n$, then $\spr(T)=1\in\sigma(T)$
  and Lemma~\ref{lem_powers_of_operator_positive_eigenvector} guarantees
  the existence of a positive fixed vector of $T$ (alternatively, we
  could apply the classical Perron--Frobenius
  theory). Lemma~\ref{lem_evtl_pos_family_of_op_eigenvectors}(i) applied
  to the operator family $(T^j)_{j \in \bbN}$ now implies that $T$ is
  power-bounded. Therefore, $\lambda R(\lambda,A)$ is power-bounded as
  well, and so $0$ is a simple pole of $R(\phdot,A)$ by
  Lemma~\ref{lem_convergence_of_resolvent_to_spectral_projection}(i).

  If $P$ denotes the spectral projection of $A$ corresponding to $0$,
  then by
  Lemma~\ref{lem_convergence_of_resolvent_to_spectral_projection}(ii) we
  have $P = \lim_{j \to \infty} T^j$. As $T\gg 0$ we have $P\geq 0$.
  Since $\lambda_0 = \spb(A) = 0$ is an eigenvalue of $A$, we also have
  $P \not= 0$, so $P > 0$. Moreover, $\im(P)=\ker(A)$ because $0$ is a
  simple pole of $R(\phdot,A)$. Thus, $\im(P)$ is contained in the fixed
  space of the operator family $(T^j)_{j \in \bbN}$ and we can apply
  Lemma~\ref{lem_evtl_pos_family_of_op_eigenvectors}(ii) to conclude
  that $P\gg 0$. Hence the equivalent conditions of
  Theorem~\ref{thm:evpos-resolvents-c_k} are fulfilled.
\end{proof}

\section{Eventually strongly positive semigroups}
\label{sec:semigroups}

In this section we come to the heart of the subject. We now use the
results of the previous sections to analyse eventual positivity
properties of $C_0$-semigroups. Let us start by defining the central
notion of this article.

\begin{definition}
  \label{def_eventual_positivity_for_sg_on_c_k}
  Let $(e^{tA})_{t \geq 0}$ be a real $C_0$-semigroup on $E = C(K)$.
  \begin{enumerate}[(a)]
  \item The semigroup $(e^{tA})_{t \geq 0}$ is called \emph{individually
      eventually} (\emph{strongly}) \emph{positive} if for every $f \in
    E_+ \setminus\{0\}$ there exists $t_0 \geq 0$ such that $e^{tA}f
    \geq 0$ ($\gg 0$) for all $t\geq t_0$.
  \item The semigroup $(e^{tA})_{t \geq 0}$ is called \emph{uniformly
      eventually} (\emph{strongly}) \emph{positive} if there exists $t_0
    \geq 0$ such that $e^{tA} \geq 0$ ($\gg 0$) for all $t\geq t_0$.
  \end{enumerate}
\end{definition}

Again, we point out that individual and uniform eventual positivity are
not equivalent, see
Examples~\ref{ex_ind_evtl_pos_does_not_imply_unif_evtl_pos} and
\ref{ex_ind_evtl_pos_does_not_imply_unif_evtl_pos_for_compact_res}
below.

It is well known that a $C_0$-semigroup $(e^{tA})_{t \geq 0}$ on a
Banach lattice is positive if and only if the resolvent $R(\lambda,A)$
is positive for all $\lambda > \spb(A)$. However, the situation is more
complicated for eventual positivity. The point is that the long time
behaviour of the semigroup may be influenced by non-real elements of
the peripheral spectrum, while those spectral values have only a minor
influence on the behaviour of the resolvent $R(\lambda,A)$ as $\lambda
\downarrow \spb(A)$. We illustrate the problem with an example in 
three-dimensional space.

\begin{example}
  \label{ex:sg-rot}
  Let $A$ be the $3\times 3$ matrix generating the rotation semigroup
  $(e^{tA})_{t \geq 0}$ rotating vectors about the line in the direction
  of the unit vector $u_1=3^{-1/2}(1,1,1)$ in $\bbR^3$ (more precisely,
  we consider the extension of this semigroup to $\bbC^3$). Then
  $\sigma(A)=\{0,i,-i\}$. Clearly the spectral projection $P$ associated
  with $\spb(A)=0$ is given by $Px=\langle u_1,x\rangle u_1$. Hence
  $P\gg 0$ and Theorem~\ref{thm:evpos-resolvents-c_k} implies that
  $R(\lambda,A)$ is individually eventually strongly positive at
  $\spb(A) = 0$. However, there exist arbitrarily large $t>0$ such that
  $e^{tA}e_k\not\geq 0$, where $(e_k)$ is the standard basis. Hence,
  $e^{tA}$ cannot be eventually positive.
\end{example}
\begin{remark}
  \label{rem:sg-rot}
  (a) If we modify the above example in such a way that the semigroup
  becomes exponentially stable on the orthogonal complement of $u_1$,
  then it becomes eventually strongly positive. More precisely, for
  $\mu>0$ we consider the generator $\tilde A_\mu:=A-\mu(I-P)$. Then,
  $\sigma(\tilde A_\mu)=\{0,-\mu+i,-\mu-i\}$, that is, $0$ is a dominant
  eigenvalue of $A_\mu$. We then have
  \begin{displaymath}
    e^{t\tilde A_\mu}=P+e^{-\mu t}e^{tA}(I-P)\to P
  \end{displaymath}
  as $t\to\infty$. As $P\gg 0$ it is obvious that $e^{t\tilde A_\mu}\gg
  0$ for $t$ sufficiently large.

  (b) Alternatively, we could modify the above example in the following
  way: as in Remark~\ref{rem:eigenvalue-shift} we let $A_\mu := A - \mu
  P$. We then have $\sigma(A_\mu) = \{-\mu,i,-i\}$.  If $\mu > 0$, then
  $\spb(A_\mu)=0$ and we can make the following observation: We know
  that $R(\lambda,A)$ is uniformly eventually strongly positive at $0$,
  that is, there exists $\lambda_1>0$ such that $R(\lambda_1,A)\gg
  0$. As $R(\lambda_1,A_\mu)$ is a continuous function of $\mu\in\mathbb
  R$ there exists $\mu>0$ such that $R(\lambda_1,A_\mu)\gg 0$. By
  Proposition~\ref{prop_evtl_pos_res_at_one_point_on_c_k} we have
  $R(\lambda,A_\mu)\gg 0$ for all $\lambda\in(-\mu,\lambda_1]$.  In
  particular $R(\lambda,A_\mu)\gg 0$ for all $\lambda\in(0,\lambda_1)$,
  but $0=\spb(A_\mu)\not\in\sigma(A_\mu)$.  This shows that we cannot
  conclude that $\spb(A_\mu)$ is a spectral value of $A_\mu$ if
  $R(\lambda,A_\mu) \ge 0$ for all $\lambda$ in a right neighbourhood of
  $\spb(A_\mu)$.
\end{remark}
Part (a) of the above remark suggests that if $\spb(A)$ is a dominant
eigenvalue, then eventual strong positivity of the semigroup is
equivalent to eventual strong positivity of the resolvent at $\spb(A)$.
Recall from Theorem~\ref{thm:evpos-resolvents-c_k} that individual
eventual strong positivity of the resolvent at a pole $\lambda_0 \in
\sigma(A) \cap \bbR$ of the resolvent has several equivalent
manifestations. The most convenient is that the spectral projection $P$
associated with $\lambda_0$ is strongly positive; this property can also
be characterised by the conditions in
Proposition~\ref{prop:projections}. Thus,
Theorem~\ref{thm:evpos-resolvents-c_k} and
Proposition~\ref{prop:projections} yield various possibilities to check
the second part of condition (ii) in the following theorem.

\begin{theorem}
  \label{thm:evpos-semigroups-c_k}
  Let $(e^{tA})_{t \geq 0}$ be a real $C_0$-semigroup on $E=C(K)$ with
  $\sigma(A) \not= \emptyset$. Suppose that the peripheral spectrum
  given by \eqref{eq:sigma-per} is finite and consists of poles of the
  resolvent. Then the following assertions are equivalent:
  \begin{enumerate}[\upshape (i)]
  \item The semigroup $(e^{tA})_{t\geq 0}$ is individually eventually
    strongly positive.
  \item The semigroup $(e^{t(A-\spb(A) I)})_{t\geq 0}$ is bounded,
    $\spb(A)$ is a dominant spectral value of $A$ and the associated
    spectral projection $P$ fulfils $P\gg 0$.
  \item The semigroup $e^{t(A - \spb(A) I)}$ converges strongly to some
    operator $Q\gg 0$ as $t\to\infty$.
  \end{enumerate}
  If assertions {\upshape(i)}--{\upshape(iii)} are fulfilled, then $Q$
  is the spectral projection associated with $\spb(A)$, that is, $Q=P$.
\end{theorem}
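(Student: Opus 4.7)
After rescaling I may assume $\spb(A)=0$. The strategy is to prove (iii)$\Rightarrow$(i), (iii)$\Rightarrow$(ii) (which will also identify $Q=P$), (ii)$\Rightarrow$(iii), and finally (i)$\Rightarrow$(ii); the last implication is the main difficulty.

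The direction (iii)$\Rightarrow$(i) is immediate from the fact that $\{g\in C(K)_{\bbR} : g\geq\delta\one \text{ for some }\delta>0\}$ is open in $C(K)_{\bbR}$, so $e^{tA}f\to Qf\gg 0$ in norm forces $e^{tA}f\gg 0$ for large $t$. For (iii)$\Rightarrow$(ii), strong convergence yields boundedness of $(e^{tA})$ by Banach--Steinhaus, and passing the identity $e^{(s+t)A}f=e^{sA}e^{tA}f$ to the limit shows that $Q$ is a projection with $\im Q=\ker A$. Boundedness forces every pole in $\sigma_\per(A)$ to be simple (a higher-order Laurent singularity of $R(\phdot,A)$ would produce polynomial-in-$t$ unbounded orbits), and any non-real $i\beta\in\sigma_\per(A)$ would make $\im P_\beta$ carry non-convergent orbits $e^{tA}g=e^{it\beta}g$, contradicting (iii); hence $\sigma_\per(A)=\{0\}$ is dominant and $\ker A=\im P$. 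Since $P$ commutes with each $e^{tA}$, hence with $Q$, both projections have the same range $\ker A$, and on $\ker P$ one has $Qg=PQg=QPg=Q\cdot 0=0$; thus $Q=P$ by the decomposition $E=\im P\oplus\ker P$, which simultaneously gives $P\gg 0$ and the final identity.

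For (ii)$\Rightarrow$(iii), decompose $E=\im P\oplus\ker P$; both summands are $e^{tA}$-invariant. Since $0$ is a simple pole by boundedness, $\im P=\ker A$ and $e^{tA}$ is the identity on $\im P$. The restricted generator on $\ker P$ has spectrum $\sigma(A)\setminus\{0\}$, whose intersection with $i\bbR$ equals $\sigma_\per(A)\setminus\{0\}=\emptyset$ by dominance; the restricted semigroup is bounded with empty imaginary spectrum, so the Arendt--Batty--Lyubich--Vu theorem gives strong stability $e^{tA}(I-P)f\to 0$. Hence $e^{tA}f\to Pf$ strongly, which is (iii) with $Q=P\gg 0$.

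The main obstacle is (i)$\Rightarrow$(ii). My plan is to derive individual eventual strong positivity of $R(\phdot,A)$ at $\spb(A)$ from (i) and then invoke Theorem~\ref{thm:evpos-resolvents-c_k}. For $f>0$ with $e^{tA}f\gg 0$ for $t\geq t_0$ and $\lambda>\gbd(A)$, I would split the Laplace representation
\begin{equation*}
  R(\lambda,A)f=\int_0^{t_0}e^{-\lambda t}e^{tA}f\,dt+\int_{t_0}^\infty e^{-\lambda t}e^{tA}f\,dt;
\end{equation*}
the tail is strongly positive while the head is uniformly bounded, and analytic continuation along the real axis---permissible because $\sigma_\per(A)$ is a finite set of poles, so $\spb(A)$ is isolated in $\sigma(A)$---combined with the blow-up of the tail as $\lambda\downarrow\spb(A)$ should force $R(\lambda,A)f\gg 0$ in a right neighbourhood of $\spb(A)$ depending on $f$. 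Theorem~\ref{thm:evpos-resolvents-c_k} then yields $P\gg 0$ and simplicity of the pole at $\spb(A)$. Dominance $\sigma_\per(A)=\{\spb(A)\}$ is obtained by combining (i) with Proposition~\ref{prop_convergence_on_rangel_of_per_spec_proj}: along the subsequence $t_n\to\infty$, $e^{t_n A}f\to P_\per f$ on $\im P_\per$; choosing $f>0$ with $Pf\gg 0$, the eventual strong positivity of $e^{t_n A}f$ together with the strongly positive $Pf$ forces any contribution from non-real $i\beta\in\sigma_\per(A)$ to cancel and therefore vanish. Finally, boundedness of $(e^{tA})$ follows from dominance via the decomposition against $P$ and a further application of Arendt--Batty--Lyubich--Vu. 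I expect the analytic-continuation step in the Laplace argument, and the combinatorial ruling-out of non-real peripheral spectrum, to be the most delicate points.
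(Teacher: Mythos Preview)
Your arguments for (iii)$\Rightarrow$(i), (iii)$\Rightarrow$(ii), and (ii)$\Rightarrow$(iii) are correct and close to the paper's. The trouble is entirely in (i)$\Rightarrow$(ii), where two steps fail. First, the Laplace splitting does not give individual eventual strong positivity of $R(\phdot,A)$ at $\spb(A)$. The tail $T(\lambda)=\int_{t_0}^\infty e^{-\lambda t}e^{tA}f\,dt$ is indeed $\gg 0$ for each fixed $\lambda>0$, but to force $H(\lambda)+T(\lambda)\gg 0$ you need $\inf_{x\in K}T(\lambda)(x)>\|H(\lambda)\|_\infty$, and your ``blow-up of the tail'' is not available for arbitrary $f>0$: if $Pf=0$ (not yet excluded---that is precisely what $P\gg 0$ would give) then $R(\lambda,A)f$ and hence $T(\lambda)$ stay bounded as $\lambda\downarrow 0$; and even when $\|T(\lambda)\|\to\infty$, that controls $\sup_x T(\lambda)(x)$, not $\inf_x T(\lambda)(x)$. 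Second, your closing step is circular: the Arendt--Batty--Lyubich--Vu theorem \emph{assumes} boundedness of the semigroup and concludes strong stability; it cannot be used to produce boundedness. Knowing $\spb(A|_{\ker P})<0$ does not yield a negative growth bound on $\ker P$, since the restriction is not eventually positive and Theorem~\ref{thm:spb-growth-bd} does not apply to it. (You also need \emph{all} peripheral poles simple before invoking Proposition~\ref{prop_convergence_on_rangel_of_per_spec_proj}, not just $\spb(A)$.)

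The paper's route for (i)$\Rightarrow$(ii) reverses the order and avoids the resolvent entirely. It first obtains a positive eigenvector $u>0$ at $\spb(A)$ via Theorems~\ref{thm:spb-in-spectrum} and~\ref{thm:boundary-spectrum}(i), then applies Lemma~\ref{lem_evtl_pos_family_of_op_eigenvectors}(i) to the family $(e^{tA})_{t\ge 0}$ with fixed vector $u$ to deduce boundedness \emph{directly} from (i). Boundedness then forces $\spb(A)$ to be a simple pole and, by Theorem~\ref{thm:boundary-spectrum}(ii), every peripheral pole to be simple; with that in hand, Proposition~\ref{prop_convergence_on_rangel_of_per_spec_proj} together with ABLV on $\ker P_{\per}$ shows the restriction of the semigroup to $\im P_{\per}$ is a genuinely positive semigroup on a Banach lattice, whose peripheral spectrum is therefore additively cyclic and hence equal to $\{0\}$. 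Only at the very end is $P\gg 0$ obtained, via Lemma~\ref{lem_evtl_pos_family_of_op_eigenvectors}(ii). In short: boundedness comes first, from a positive eigenvector, not last from ABLV.
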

\begin{proof}
  We may assume that $\spb(A)=0$.

  ``(i) $\Rightarrow$ (ii)'' It follows from
  Theorem~\ref{thm:spb-in-spectrum} and
  Theorem~\ref{thm:boundary-spectrum}(i) below that $\spb(A)=0$ is an
  eigenvalue of $A$ admitting an eigenvector $x>0$. As $e^{tA}x=x$ for
  all $t>0$, the vector $x>0$ belongs to the fixed space $F$ of the
  operator family
  \begin{equation}
    \label{eq:sg-family}
    \calT=(e^{tA})_{t \in [0,\infty)}\text{.}
  \end{equation}
  Moreover, by (i), for every $f>0$ there exists $t_0>0$ such that
  $e^{tA}f\gg 0$ for all $t\geq t_0$. Finally, by strong continuity
  the sub-family $(e^{tA})_{t \in [0,T]}$ is bounded for every
  $T>0$. Applying Lemma~\ref{lem_evtl_pos_family_of_op_eigenvectors}(i)
  to the operator family \eqref{eq:sg-family} with $J=[0,\infty)$ and
  the order $\preceq$ given by $\leq$, we conclude that the semigroup
  $(e^{tA})_{t \in [0,\infty)}$ is bounded.

  We next show that $\spb(A)$ is a simple pole of $R(\phdot,A)$. Let
  $C:=\sup_{t\geq 0}\|e^{tA}\|$. By the Laplace transform representation
  of $R(\lambda,A)$
  \begin{displaymath}
    \|\lambda R(\lambda,A)\|
    =\Bigl\|\int_0^\infty \lambda e^{-\lambda t}e^{tA}\,dt\Bigr\|
    \leq C\int_0^\infty \lambda e^{-\lambda t}\,dt
    =C
  \end{displaymath}
  for all $\lambda>0$. In particular, $\lambda^mR(\lambda,A)\to 0$ as
  $\lambda\to 0^+$ for all $m\geq 2$. As $0$ is a pole of the resolvent
  it must therefore be a simple pole. By
  Theorem~\ref{thm:boundary-spectrum}(ii) below this in turn implies
  that all poles of $A$ on the imaginary axis are simple poles of
  $R(\phdot,A)$.

  Next we show that $\spb(A)$ is a dominant spectral value.  Denote by
  $P_{\per}$ the spectral projection corresponding to the peripheral
  spectrum $\sigma_{\per}(A)=\sigma(A)\cap i\mathbb R$; we have
  $P_{\per} \not= 0$ since the peripheral spectrum $\sigma_{\per}(A)$
  contains $\spb(A)$ and is thus non-empty. Let $f \geq 0$ and $t\geq
  0$.  By Proposition~\ref{prop_convergence_on_rangel_of_per_spec_proj},
  there exists a sequence $(t_n) \subseteq[0,\infty)$ with
  $\lim_{n\to\infty} t_n = \infty$ and $e^{t_nA}P_{\per}f \to P_{\per}f$
  for every $f\in E$. Also, $\sigma(A|_{\ker P_{\per}})
  \subseteq\{z\in\bbC\colon\repart z<0\}$, and as shown before,
  $(e^{tA})_{t \geq 0}$ is bounded. Now \cite[Definition~1.1.3 and
  Corollary~5.2.6]{Neerven1996} or \cite[Theorem~2.4]{MR933321} implies
  that $e^{tA}$ converges strongly to $0$ on $\ker(P_{\per})$. As
  $e^{tA}$ is individually eventually positive we conclude that
  \begin{displaymath}
    e^{tA}P_{\per} f
    =P_{\per}e^{tA} f
    = \lim_{n\to\infty} e^{t_nA}P_{\per}e^{tA}f
    = \lim_{n\to\infty} e^{t_nA}e^{tA}f \geq 0
  \end{displaymath}
  for all $t\geq 0$ and $f\geq 0$. In particular, $(e^{tA})_{t\geq 0}$
  restricted to $\im(P_{\per})$ is positive. Setting $t=0$ we also see
  that $P_{\per}\geq 0$ and thus $\im(P_{\per})$ is again a Banach
  lattice when equipped with a suitable equivalent norm as stated in
  Proposition~\ref{prop_range_of_pos_proj}.  Thus, $(e^{tA}|_{\im
    P_{\per}})_{t \geq 0}$ is a bounded, positive $C_0$-semigroup on the
  Banach lattice $\im(P_{\per})$ and the spectral bound of its generator
  is $\spb(A|_{\im P_{\per}}) = 0$. Therefore, the set
  $\sigma_{\per}(A|_{\im P_{\per}}) = \sigma_{\per}(A)$ is imaginary
  additively cyclic, see \cite[Definition~B-III-2.5, Theorem~C-III-2.10
  and Proposition~C-III-2.9]{Arendt1986}. By assumption
  $\sigma_{\per}(A)$ is finite and non-empty, so we conclude that
  $\sigma_{\per}(A) = \{0\}$; in particular $P=P_{\per}$.

  Let us finally show that $P\gg 0$. We have already shown that $P =
  P_{\per}>0$ and that $0$ is a simple pole of the resolvent. Therefore
  $\im(P)=\ker(A)$ and thus $\im(P)$ coincides with the fixed space of
  the operator family $\calT:=(e^{tA})_{t \in [0,\infty)}$.  Hence,
  Lemma~\ref{lem_evtl_pos_family_of_op_eigenvectors}(ii) implies that
  $P\gg 0$.
    
  ``(ii) $\Rightarrow$ (iii)'' Since $P\gg 0$
  Proposition~\ref{prop:projections} shows that $0$ is a simple pole of
  the resolvent and therefore $\im(P)=\ker(A)$. Moreover, as $\spb(A) =
  0$ is a dominant spectral value and $(e^{tA})_{t\geq 0}$ is bounded,
  we conclude from \cite[Corollary~5.2.6]{Neerven1996} or
  \cite[Theorem~2.4]{MR933321} that $e^{tA}\to 0$ strongly on
  $\ker(P)$. Hence, if $f>0$ we have $e^{tA}f = Pf + e^{tA}(I-P)f\to
  Pf\gg 0$ as $t \to \infty$. Therefore, $e^{tA}$ converges strongly to
  the operator $Q:=P\gg 0$ as $t\to\infty$.

  ``(iii) $\Rightarrow$ (i)'' Suppose that $\lim_{t\to\infty}e^{tA}f
  =Qf\gg 0$ for all $f>0$.  As the positive cone has non-empty interior
  in $E_{\mathbb R}$ and as the semigroup is real, we conclude that
  there exists $t_0>0$ such that $e^{tA}f\gg 0$ for all $t>t_0$.

  Hence we have shown the equivalence of (i)--(iii). The proof of the
  implication ``(ii) $\Rightarrow$ (iii)'' shows that $Q=P$ in (iii).
\end{proof}
\begin{remark}
  Some assertions of Theorem~\ref{thm:evpos-semigroups-c_k} have
  counterparts in the theory of \emph{positive} semigroups. For example,
  if $(e^{tA})_{t \ge 0}$ is a positive semigroup and the spectral
  assumptions of Theorem~\ref{thm:evpos-semigroups-c_k} are fulfilled,
  then it follows from \cite[Theorem~C-III-1.1(a) and
  Corollary~C-III-2.12]{Arendt1986} that $\spb(A)$ is a dominant
  spectral value. If the positive semigroup $(e^{tA})_{t \ge 0}$ is
  irreducible and the assumptions of
  Theorem~\ref{thm:evpos-semigroups-c_k} are fulfilled, then it is also
  known (see \cite[Proposition~C-III-3.5]{Arendt1986}) that the spectral
  projection corresponding to $\spb(A)$ is strongly positive and that
  the corresponding eigenspaces of $A$ and $A'$ have the properties that
  were proved in a more general situation in
  Proposition~\ref{prop:projections}.
  
  It is also a classical idea in the theory of positive semigroups that,
  under appropriate assumptions, positivity implies convergence of the
  semigroup, see e.g.~\cite[Section~C-IV-2]{Arendt1986}.
  
  The converse implications ``(ii), (iii) $\Rightarrow$ (i)'' however
  have no counterparts for positive semigroups; they show that
  \emph{eventual} positivity provides the right setting to give
  \emph{characterisations} of Perron--Frobenius type properties and of
  convergence to positive limit operators. In finite dimensions, this
  has already been demonstrated by similar results; see for example
  \cite[Theorem~3.3]{Noutsos2008}. We also refer to our discussion of
  the finite-dimensional case in
  Section~\ref{subsection:appl-finite-dim}.
\end{remark}

Under an additional regularity assumption on the semigroup the
boundedness condition in Theorem~\ref{thm:evpos-semigroups-c_k}(ii) can
be removed, as the following corollary shows. In particular such a
regularity condition is satisfied for analytic semigroups. The corollary
will be useful to check eventual positivity in several applications in
Section~\ref{section_applications_on_c_k}.

\begin{corollary}
  \label{cor:evpos-semigroups-c_k}
  Suppose that $(e^{tA})_{t \geq 0}$ is a real $C_0$-semigroup on
  $E=C(K)$ with $\sigma(A)\neq\emptyset$, and that the peripheral
  spectrum given by \eqref{eq:sigma-per} is finite and consists of poles
  of the resolvent. If the semigroup $(e^{tA})_{t \geq 0}$ is eventually
  norm continuous, then the following assertions are equivalent:
  \begin{enumerate}[\upshape (i)]
  \item The semigroup $(e^{tA})_{t\geq 0}$ is individually eventually
    strongly positive.
  \item $\spb(A)$ is a dominant spectral value of $A$ and the associated
    spectral projections $P$ fulfils $P \gg 0$.
  \end{enumerate}
\end{corollary}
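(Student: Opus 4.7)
I would tackle the corollary by reducing it to Theorem~\ref{thm:evpos-semigroups-c_k}: the only difference between the two statements is the boundedness clause ``$(e^{t(A - \spb(A)I)})_{t \ge 0}$ is bounded'' appearing in Theorem~\ref{thm:evpos-semigroups-c_k}(ii), and the task is to show that this boundedness is automatic once the semigroup is assumed to be eventually norm continuous.

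The direction (i) $\Rightarrow$ (ii) is immediate: eventual strong positivity is precisely (i) of Theorem~\ref{thm:evpos-semigroups-c_k}, and the equivalence proved there yields the dominance of $\spb(A)$ and $P \gg 0$, which is all that (ii) of the corollary asserts. For the converse (ii) $\Rightarrow$ (i), after the usual shift $\spb(A) = 0$, I would first invoke Proposition~\ref{prop:projections} to conclude from $P \gg 0$ that $0$ is a simple pole of $R(\phdot, A)$, so that $\im P = \ker A$ and the decomposition $E = \im P \oplus \ker P$ completely reduces the semigroup. On $\im P$ the semigroup acts as the identity, so everything hinges on controlling the restricted semigroup $(e^{tA}|_{\ker P})_{t \geq 0}$, which inherits eventual norm continuity and whose generator $A|_{\ker P}$ has spectrum $\sigma(A) \setminus \{0\}$, contained in the open left half-plane thanks to the dominance of $0$ and its isolation as a pole.

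The key intermediate step is to show that this restricted semigroup is in fact exponentially stable, i.e.\ $\omega_0(A|_{\ker P}) < 0$. Here I would use that for eventually norm continuous semigroups the spectral mapping theorem $\sigma(e^{tA}) \setminus \{0\} = e^{t\sigma(A)}$ holds (Engel--Nagel, Theorem~IV.3.10), and consequently $s = \omega_0$ for both $A$ and $A|_{\ker P}$. Choose $t_0$ past the norm-continuity threshold; then $\sigma(e^{t_0 A})$ is compact, and by the spectral mapping theorem together with the hypothesis $\sigma_{\per}(A) = \{0\}$, its peripheral spectrum is the singleton $\{1\}$, which is attained solely from $0 \in \sigma(A)$. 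The isolation of $0$ in $\sigma(A)$ together with the impossibility of an accumulation of $\sigma(A)$ at any other point of $i\bbR$ forces $1$ to be an isolated point of $\sigma(e^{t_0 A})$; pulling this back through $e^{t_0 \cdot}$ yields a strip $\{-\varepsilon < \repart \lambda < 0\}$ free of spectrum of $A$, so $s(A|_{\ker P}) \le -\varepsilon < 0$. Hence $\|e^{tA}(I - P)\|$ decays exponentially.

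Having established exponential decay on $\ker P$, boundedness of $(e^{tA})_{t \ge 0}$ follows from $\|e^{tA}\| \le \|P\| + \|e^{tA}(I-P)\|$, so condition (ii) of Theorem~\ref{thm:evpos-semigroups-c_k} is now fully verified; the implication (ii) $\Rightarrow$ (i) of that theorem then delivers the individual eventual strong positivity claimed in (i) of the corollary. The main obstacle in this plan is precisely the spectral-gap argument of the middle paragraph: translating ``eventual norm continuity plus finiteness of $\sigma_{\per}(A)$ plus $0$ being an isolated pole'' into the strict inequality $s(A|_{\ker P}) < 0$. It is conceivable that the cleanest route avoids pulling $1$ back through the exponential and instead invokes directly a characterisation such as Engel--Nagel's Theorem~II.4.20 relating eventual norm continuity to decay of $\|R(\omega + is, A)\|$ as $|s| \to \infty$, but either way this is the single place where the extra regularity assumption genuinely enters.
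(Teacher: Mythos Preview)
Your overall architecture is exactly that of the paper: reduce to Theorem~\ref{thm:evpos-semigroups-c_k}, observe that (i)~$\Rightarrow$~(ii) is free, and for (ii)~$\Rightarrow$~(i) use Proposition~\ref{prop:projections} to get that $0$ is a simple pole, split $E=\im P\oplus\ker P$, note the semigroup is the identity on $\im P$, and then establish exponential stability on $\ker P$ from eventual norm continuity. The only point of divergence is how you obtain $\spb(A|_{\ker P})<0$.

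Here your argument has a gap. You claim that ``the isolation of $0$ in $\sigma(A)$ together with the impossibility of an accumulation of $\sigma(A)$ at any other point of $i\bbR$ forces $1$ to be an isolated point of $\sigma(e^{t_0A})$''. These two ingredients do not suffice: they do not rule out a sequence $\lambda_n\in\sigma(A)$ with $\repart\lambda_n\to 0^-$ and $|\impart\lambda_n|\to\infty$. For such a sequence $|e^{t_0\lambda_n}|\to 1$, and (via the spectral mapping theorem you cite) any cluster value on the unit circle must equal $1$; hence $1$ could still be an accumulation point of $\sigma(e^{t_0A})$, and the ``pull-back'' step collapses. What is missing is precisely the consequence of eventual norm continuity that the paper invokes directly, namely \cite[Theorem~II.4.18]{Engel2000}: for each $\alpha\in\bbR$ the set $\{\lambda\in\sigma(A):\repart\lambda\ge\alpha\}$ is bounded. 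This excludes the escaping sequence above.

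Once you have II.4.18, the detour through $\sigma(e^{t_0A})$ is unnecessary. The paper simply observes that $\{\lambda\in\sigma(A):\repart\lambda\ge\alpha\}$ is bounded (hence compact), so since $0$ is isolated and dominant one gets $\spb(A|_{\ker P})<0$ immediately; then $\spb=\gbd$ for eventually norm continuous semigroups yields $\gbd(A|_{\ker P})<0$, and boundedness on $\ker P$ follows. Your closing remark about II.4.20 points in the right direction; II.4.18 is the cleaner tool here.
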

\begin{proof}
  We may assume that $\spb(A) = 0$ and we note that all assumptions of
  Theorem~\ref{thm:evpos-semigroups-c_k} are fulfilled.
  
  Clearly, (i) implies (ii) by Theorem~\ref{thm:evpos-semigroups-c_k}.
  If (ii) holds, then $\spb(A) = 0$ is a simple pole of $R(\phdot,A)$ by
  Proposition~\ref{prop:projections}, so the semigroup $(e^{tA})_{t\geq
    0}$ is bounded on $\im P$. As the semigroup is eventually norm
  continuous, the set $\{\lambda\in\sigma(A)\colon
  \alpha\leq\repart\lambda\}$ is bounded for every $\alpha\in \bbR$ (see
  \cite[Theorem~II.4.18]{Engel2000}) and we conclude that $\spb(A|_{\ker
    P})<0$. From the eventual norm continuity it now follows that the
  growth bound of $(e^{tA}|_{\ker P})_{t \geq 0}$ is negative. Hence
  $(e^{tA})_{t \geq 0}$ is also bounded on $\ker P$.  Therefore,
  condition (ii) of Theorem~\ref{thm:evpos-semigroups-c_k} is fulfilled,
  and hence (i) follows.
\end{proof}

Since we have now several criteria at hand to check whether a semigroup
is individually eventually strongly positive, it is time to give an
example which shows that it is necessary to distinguish between the
individual and the uniform eventual behaviour of a semigroup.

\begin{example}
  \label{ex_ind_evtl_pos_does_not_imply_unif_evtl_pos}
  Consider the Banach lattice $E = C([-1,1])$. Let $\varphi\colon E \to
  \bbC$ be the continuous linear functional given by $\varphi(f) =
  \int_{-1}^1f(x)\,dx$. We thus have the decomposition
  \begin{displaymath}
    E = \langle \one  \rangle \oplus F 
    \qquad \text{with} \qquad
    F:=\ker\varphi \text{.}
  \end{displaymath}
  By $S$ we denote the reflection operator on $F$, given by $Sf(x) =
  f(-x)$ for all $f \in F$ and all $x \in [-1,1]$. Using $S^2 =  I_F$
  we see that $\sigma(S) = \{1,-1\}$ with corresponding eigenspaces
  given by even and odd functions, respectively.

  Now define a bounded linear operator $A$ on $E$ by
  \begin{displaymath}
    A = 0_{\langle \one  \rangle} \oplus (-2 I_F - S) \text{.}
  \end{displaymath}
  We have $\sigma(A) = \{0,-1,-3\}$ and, using $S^2 =  I_F$, we can
  immediately check that
  \begin{align}
    e^{tA} &= I_{\langle\one\rangle}\oplus e^{-2t} \bigl(\cosh(t) I_F-
    \sinh (t) \, S \bigr) \quad\text{and}
    \label{form_sg_counter_ex_reflection_on_c_k_space} \\
    R(\lambda,A) & =\frac{1}{\lambda} I_{\langle\one\rangle}
    \oplus\frac{1}{(\lambda+2)^2 - 1}\bigl((\lambda+2) I_F-S\bigr)
    \text{.}
    \label{form_res_counter_ex_reflection_on_c_k_space}
  \end{align}
  for all $t \geq 0$ and all $\lambda \in \rho(A) = \bbC \setminus
  \{0,-1,-3\}$.  The spectral bound $\spb(A) = 0$ is a dominant spectral
  value and the associated spectral projection $P$ is given by $P f =
  \frac{1}{2}\varphi(f) \one $ and thus strongly positive. Hence, our 
  semigroup is individually eventually strongly positive due to 
  Corollary~\ref{cor:evpos-semigroups-c_k}, and so is the resolvent at
  $\spb(A)$ due to Theorem~\ref{thm:evpos-resolvents-c_k}.

  Now for each $\varepsilon > 0$ choose a function $f_\varepsilon \in
  E_+$ with $\|f_\varepsilon\|_\infty=1$, $\varphi(f_\varepsilon) = \varepsilon$,
  $f_\varepsilon(1) = 1$ and $f_\varepsilon(-1) = 0$. Then
  \begin{displaymath}
    P f_\varepsilon = \frac{\varepsilon}{2} \one  \quad \text{and}
    \quad ( I_E - P) f_\varepsilon = f_\varepsilon -
    \frac{\varepsilon}{2} \one  \text{.}
  \end{displaymath}
  By \eqref{form_sg_counter_ex_reflection_on_c_k_space} we obtain for $t
  \geq 0$ that
  \begin{displaymath}
    e^{tA}f_\varepsilon(-1) = \frac{\varepsilon}{2}\big( 1 -
    e^{-2t}\cosh t + e^{-2t} \sinh t \big) - e^{-2t} \sinh t \text{.}
  \end{displaymath}
  Thus, for each $t \geq 0$, we can choose $\varepsilon > 0$ small 
  enough such that $e^{-tA}f_\varepsilon \not \geq 0$. Therefore
  $(e^{tA})_{t\geq 0}$ is not uniformly eventually positive. In
  particular, it is not uniformly eventually strongly positive. In a
  similar way one can check that the resolvent $R(\phdot, A)$ is not
  uniformly eventually positive at $\spb(A)$.
\end{example}

Noting that the generator of the semigroup in the previous example is
merely bounded, it is natural to ask whether the situation changes if we impose
additional compactness conditions on our semigroup. We proceed with a
further example which is rather disillusioning. We construct an analytic
semigroup with compact resolvent such that the semigroup is individually
eventually strongly positive, but again not even uniformly eventually
positive. The basic idea of the construction is rather similar to
Example~\ref{ex_ind_evtl_pos_does_not_imply_unif_evtl_pos}, but it is
somewhat more technical.
\begin{example}
  \label{ex_ind_evtl_pos_does_not_imply_unif_evtl_pos_for_compact_res}
  Let $c(\bbZ)$ be the subspace of $\ell^\infty(\bbZ)$ given by
  \begin{displaymath}
    c(\bbZ):=\langle\one\rangle\oplus c_0(\bbZ),
  \end{displaymath}
  where $c_0(\bbZ)$ is as usual the set of sequences $(x_n) \in
  \ell^\infty(\bbZ)$ with $x_n \to 0$ as $n \to \pm \infty$.  It is easy
  to see that $c(\bbZ)\simeq C(K)$ for some compact Hausdorff space
  $K$. We can write $c_0(\bbZ)$ as a direct sum of the subspaces of
  symmetric and anti-symmetric sequences, that is, the spaces
  \begin{displaymath}
    \begin{aligned}
      c_0^s&:=\{(x_n)\in c_0(\bbZ)
      \colon\text{$x_n=x_{-n}$ for all $n\in\bbN$}\}\\
      c_0^a&:=\{(x_n)\in c_0(\bbZ) \colon\text{$x_n=-x_{-n}$ for all
        $n\in\bbN_0$}\}
    \end{aligned}
  \end{displaymath}
  If $x=(x_n)\in c_0(\mathbb Z)$, we define the reflection operator $S$
  by $S(x_n):=(x_{-n})$. Then
  \begin{displaymath}
    x = \frac{1}{2}\bigl(x+Sx\bigr)+\frac{1}{2}\bigl(x-Sx\bigr)
    \in c_0^s\oplus c_0^a
  \end{displaymath}
  is the unique decomposition into symmetric and anti-symmetric parts,
  showing that $c(\bbZ)=\langle\one\rangle\oplus c_0^s\oplus c_0^a$.

  Choose strictly positive symmetric sequences $(\alpha_n)_{n\in\bbZ}$
  and $(\beta_n)_{n\in\bbZ}$ with $e^{-n\beta_n} - e^{-n\alpha_n}<0$ for
  all $n\in\mathbb N$, and such that $\alpha_n,\beta_n \to\infty$ as
  $|n|\to\infty$. Now let $g=(g_n)\in\ell^1(\bbZ)\cap c_0^s$ be such
  that $g_n>0$ for all $n\in\mathbb Z$, $\langle g,\one\rangle=1$, and
  \begin{equation}
    \label{eq:neg-sg}
    2g_n+ e^{-n\beta_n} - e^{-n\alpha_n} < 0
  \end{equation}
  for all $n\in\bbN$ large enough. We define a Banach space isomorphism
  $B\in\mathcal L\bigl(c(\bbZ))$ and its inverse by
  \begin{equation}
    \label{eq:B-iso}
    B(c\one+x):=\bigl(c+\langle g,x\rangle\bigr)\one+x
    \quad\text{and}\quad
    B^{-1}(c\one+x):=\bigl(c-\langle g,x\rangle\bigr)\one+x.
  \end{equation}
  for all $c\in\mathbb R$ and $x\in c_0(\bbZ)$. Define the
  multiplication operators $M_\alpha$ and $M_\beta$ on $c_0^s$ and
  $c_0^a$ by $M_\alpha x:=(\alpha_nx_n)$ and $M_\beta x:=(\beta_nx_n)$
  with domains
  \begin{displaymath}
    D(M_\beta) := \{x \in c_0^s\colon\beta x \in c_0^s\}
    \quad\text{and}\quad
    D(M_\alpha) := \{x \in c_0^a\colon\alpha x \in c_0^a\}
  \end{displaymath}
  respectively. Then $-M_\beta$ and $-M_\alpha$ generate bounded
  strongly continuous analytic semigroups on $c_0^s$ and $c_0^a$
  respectively, see \cite[Section~A-I.2.3]{Arendt1986}. We define a
  semigroup $(e^{tA})_{t \ge 0}$ on $c(\bbZ)$ by using the commutative
  diagram in \eqref{eq:cd-sg}.
  \begin{equation}
    \label{eq:cd-sg}
    \begin{CD}
      c(\bbZ) @>{\qquad e^{tA}\qquad}>> c(\bbZ)\\
      @VV{B}V @AA{B^{-1}}A\\
      \langle\one\rangle\oplus c_0^s\oplus c_0^a @>{ I\oplus
        e^{-tM_\beta}\oplus e^{-tM_\alpha}}>> \langle\one\rangle\oplus
      c_0^s\oplus c_0^a
    \end{CD}
  \end{equation}
  The generator of that semigroup is given by
  $A=-B^{-1}(0_{\langle\one\rangle}\oplus M_\beta\oplus
  M_\alpha)B$. Clearly, the operator $A$ has compact resolvent and the
  semigroup $(e^{tA})_{t \ge 0}$ on $c(\bbZ)$ is real, analytic and
  bounded. Moreover, $0$ is an algebraically simple, isolated and dominant 
  eigenvalue of $A$ with eigenvector $\one \gg 0$ and a short computation shows
  that the associated spectral projection $P$ is given by $P(c\one + x)
  = (c + \langle g, x \rangle)\one$ for $c \in \mathbb C$, $x \in
  c_0(\mathbb Z)$. Now, if $c\one + x > 0$, then we can find an element
  $0 < y \in c_0(\bbZ)$ such that even $c\one + x - y \ge 0$.  Hence,
  \begin{displaymath}
    0 < \langle g, y \rangle \le \langle g, c\one + x \rangle =
    c + \langle g, x \rangle \text{,}
  \end{displaymath}  
  which shows that $P$ is strongly positive. Hence, we can apply
  Theorem~\ref{thm:evpos-semigroups-c_k} to conclude that 
  $(e^{tA})_{t \ge 0}$ is individually eventually strongly positive. 
  
  We now show that it is not
  uniformly eventually strongly positive. In fact it is not even
  uniformly eventually positive. Indeed, if $t_0 \geq 0$, then we may
  choose $n\in \bbN$, $n \geq t_0$ such that \eqref{eq:neg-sg} is
  fulfilled for this $n$. We now compute $e^{tA}x$ for $x\in
  c_0(\bbZ)$. From the definitions \eqref{eq:B-iso} of $B$ and $B^{-1}$
  we conclude that
  \begin{equation}
    \label{eq:sg}
    \begin{split}
      e^{tA}x &=B^{-1}\bigl( I_{\langle\one\rangle}
      \oplus e^{-tM_\beta}\oplus e^{-tM_\alpha}\bigl)Bx\\
      &=B^{-1}\Bigl(\langle g,x\rangle\one
      +\frac{1}{2}e^{-tM_\beta}(x+Sx)+\frac{1}{2}e^{-tM_\alpha}(x-Sx)\Bigr)\\
      &=\frac{1}{2}\Bigl(\bigl\langle g,2x
      -e^{-tM_\beta}(x+Sx)\bigr\rangle\one
      +e^{-tM_\beta}(x+Sx)+e^{-tM_\alpha}(x-Sx)\Bigr)
    \end{split}
  \end{equation}
  In particular, if $x>0$, then
  \begin{equation*}
    e^{tA}x
    <\frac{1}{2}\Bigl(2\langle g,x\rangle\one
    +e^{-tM_\beta}(x+Sx)+e^{-tM_\alpha}(x-Sx)\Bigr)
  \end{equation*}
  for all $t>0$. Taking $x=\one_{\{n\}}$ we obtain for the $(-n)$-th
  component of $e^{tA}\one_{\{n\}}$ that
  \begin{displaymath}
    \bigl(e^{tA}\one _{\{n\}}\bigr)_{-n}
    \leq \frac{1}{2}\bigl(2g_n+e^{-t\beta_n}-e^{-t\alpha_n}\bigr)\text{,}
  \end{displaymath}
  and the last term is negative for $t=n$ due to \eqref{eq:neg-sg}.
  Thus, $(e^{tA})_{t \geq 0}$ is not uniformly eventually positive.
\end{example}
\begin{remark}
  \label{rem:uniform-evpos}
  Let $(e^{tA})_{t \ge 0}$ be a real $C_0$-semigroup on $C(K)$ and
  suppose that $\spb(A) = 0$ is a dominant spectral value and a first
  order pole of the resolvent with associated spectral projection
  $P$. It does not seem to be easy to find a simple criterion that
  guarantees the uniform (strong) eventual positivity of
  $(e^{tA})_{t \ge 0}$. To provide the reader with a feeling for the
  situation we point out that a number of candidate criteria which
  appear natural at first glance do not work:
  \begin{itemize}
  \item[(a)] For example, it seems intuitive to require that $P \gg 0$
    and that $e^{tA}$ be uniformly exponentially stable on $\ker
    P$. However, this does not imply uniform eventual positivity as
    Example~\ref{ex_ind_evtl_pos_does_not_imply_unif_evtl_pos}
    shows.
  \item[(b)] Suppose that $(e^{tA})_{t \ge 0}$ is uniformly
    exponentially stable on $\ker P$. If the eigenvalue $\spb(A)$ is
    algebraically simple and if the subspace $\im A = \ker P$ has
    strictly positive distance to the positive normalised functions,
    then it is indeed possible to show that $(e^{tA})_{t \ge 0}$ is
    uniformly eventually positive. However, the reader should be warned
    that this seemingly nice criterion can in fact never be applied in 
    infinite dimensions, for the following reason: if there exists a closed
    subspace $F \subseteq C(K)$ of co-dimension $1$ such that
    \begin{displaymath}
      \inf\{\|v-u\|\colon u\in F,v\in E_+, \|v\|=1\}>0,
    \end{displaymath}
    then one can show that $K$ must actually be finite. We omit the 
    elementary proof.
  \end{itemize}
\end{remark}

\section{Applications on $C(K)$}
\label{section_applications_on_c_k}
We proceed with several applications of the results presented in
Sections~\ref{sec:projections}, \ref{sec:resolvents} and
\ref{sec:semigroups}. We begin with a short treatment of the
finite-dimensional case, where we obtain several results, including a
slight strengthening of known results, as corollaries of the general
theory on $C(K)$. Then we give an application to the
Dirichlet-to-Neumann operator, which was a major motivation for the
development of the theory presented so far. Afterwards, we show that
squares of certain generators on $C(\overline{\Omega})$ generate
eventually positive semigroups, and finally, we present an example of a
delay differential equation whose solution semigroup is eventually
positive but not positive.

\subsection{The finite-dimensional case.}
\label{subsection:appl-finite-dim}
The space $\bbC^n$ with the supremum norm $\|{\phdot}\|_\infty$ is a
complex Banach lattice when its real part $\bbR^n$ is endowed with the
canonical order. Then $(\bbC^n, \|\phdot\|_\infty) =
(C(K),\|\phdot\|_\infty)$, where $K := \{1,\dots,n\}$ is equipped with
the discrete topology, so we can apply our theory.

As noted in the introduction, a sophisticated finite-dimensional theory
of eventually positive operators and semigroups has been developed
during the last twenty years; see for instance
\cite{MR2117663,Noutsos2006} for results about eventually positive
matrices and \cite{Noutsos2008} for eventually positive matrix
semigroups. Also note that somewhat earlier several results for matrices
which posses \emph{some} positive powers were obtained, see
e.g.~\cite{Brauer1961} or \cite[p.~48--54]{Seneta1981}.

In this subsection we illustrate how the results from
Sections~\ref{sec:projections}--\ref{sec:semigroups} imply results from
\cite[Theorem~3.3]{Noutsos2008} as a special case.  The reader should
however be aware that our terminology differs in some points: for
example, matrices and vectors we call ``strongly positive'' are simply
called ``positive'' in \cite{Noutsos2008}, and what we call ``positive''
is ``non-negative'' in \cite{Noutsos2008}.

Note also that since it is easy to see that uniform and individual
eventual (strong) positivity coincide in the finite-dimensional setting,
we will omit the adjectives ``uniform'' and ``individual'' in this
subsection.
\begin{theorem}
  \label{theorem_characterisation_of_evtl_strongly_positive_sg_in_finite_dim}
  For $A \in \bbR^{n \times n}$, the following assertions are
  equivalent:
  \begin{enumerate}[\upshape (i)]
  \item The semigroup $(e^{tA})_{t \geq 0}$ is eventually strongly
    positive.
  \item The spectral bound $\spb(A)$ is a dominant and geometrically
    simple eigenvalue of $A$ and the eigenspaces $\ker(\spb(A) I - A)$ and
    $\ker(\spb(A) I - A^T)$ contain a strongly positive vector.
  \item There exists $c\in\mathbb R$ such that $A+c I$ is eventually
    strongly positive, that is, there exists $k_0>0$ such that
    $(A+c I)^k\gg 0$ for all $k\geq k_0$.
  \end{enumerate}
  If assertions {\upshape(i)}--{\upshape(iii)} are fulfilled, then
  $\spb(A)$ is an algebraically simple eigenvalue of $A$.
\end{theorem}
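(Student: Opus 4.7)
The setting is $(\bbC^n,\|\phdot\|_\infty)=C(K)$ for $K=\{1,\dots,n\}$ with the discrete topology. Here $A'=A^T$; every spectral value of $A$ is an eigenvalue and a pole of the resolvent; $\sigma_{\per}(A)$ is automatically finite; and $(e^{tA})_{t \ge 0}$ is analytic, hence eventually norm continuous. Moreover, a matrix $T$ satisfies $T\gg 0$ if and only if every entry of $T$ is strictly positive. As noted in the paper, individual and uniform eventual strong positivity coincide in this setting, so the qualifiers may be suppressed.

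For the equivalence (i) $\Leftrightarrow$ (ii) my plan is simply to combine Corollary~\ref{cor:evpos-semigroups-c_k} with Proposition~\ref{prop:projections}. The corollary yields that (i) is equivalent to $\spb(A)$ being a dominant spectral value with associated spectral projection satisfying $P\gg 0$, while Proposition~\ref{prop:projections} characterises $P\gg 0$ exactly as the condition in (ii). As a by-product, Proposition~\ref{prop:projections} also delivers the algebraic simplicity of $\spb(A)$ asserted in the last sentence of the theorem.

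For (ii) $\Rightarrow$ (iii), dominance of $\spb(A)$ gives $\operatorname{Re}\lambda<\spb(A)$ for every $\lambda\in\sigma(A)\setminus\{\spb(A)\}$, so a sufficiently large $c>0$ makes $\spb(A)+c>|\lambda+c|$ for all such $\lambda$. Set $B:=A+cI$; then $\spr(B)=\spb(A)+c$ is strictly dominant in modulus. Since $\spb(A)$ is algebraically simple, $BP=\spr(B)P$, and a standard Jordan argument yields
\begin{equation*}
  \frac{B^k}{\spr(B)^k}=P+\frac{(B(I-P))^k}{\spr(B)^k}\to P
\end{equation*}
in norm, because $\spr(B|_{\ker P})<\spr(B)$. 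As $P\gg 0$ has strictly positive entries, the entries of $B^k/\spr(B)^k$ are eventually strictly positive, giving $B^k\gg 0$ for all sufficiently large $k$.

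The main obstacle is (iii) $\Rightarrow$ (ii). Let $B:=A+cI$; $B$ itself need not be positive, but for $k\geq k_0$, $B^k$ has strictly positive entries, so the classical Perron--Frobenius theorem applies to $B^{k_0}$: its spectral radius $\spr(B)^{k_0}$ is strictly dominant in modulus, algebraically simple, and admits strongly positive eigenvectors of $B^{k_0}$ and $(B^{k_0})^T=(B^T)^{k_0}$. By the spectral mapping $\sigma(B^{k_0})=\{\mu^{k_0}\colon\mu\in\sigma(B)\}$, every $\mu\in\sigma(B)\setminus\{\spr(B)\}$ satisfies $|\mu|<\spr(B)$, and $\spr(B)$ is algebraically simple in $B$ because its generalised eigenspace is contained in the one-dimensional generalised eigenspace of $\spr(B)^{k_0}$ in $B^{k_0}$. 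The delicate point is extracting the Perron eigenvector of $B$ itself: if $v\gg 0$ satisfies $B^{k_0}v=\spr(B)^{k_0}v$, then $Bv$ is again an eigenvector of $B^{k_0}$ for the same simple eigenvalue, so $Bv=\alpha v$ with $\alpha\in\bbR$ and $\alpha^{k_0}=\spr(B)^{k_0}$; since $B^{k_0+1}v\gg 0$ and $v\gg 0$, one finds $\alpha>0$, hence $\alpha=\spr(B)$, ruling out negative or complex roots of $\spr(B)^{k_0}$. The same argument applied to $B^T$ produces a strongly positive eigenvector of $A^T$ for $\spb(A)$. Translating back by $\lambda\mapsto\lambda-c$, $\spb(A)=\spr(B)-c$ emerges as an algebraically simple dominant eigenvalue of $A$ with strongly positive eigenvectors of $A$ and $A^T$, which is (ii).
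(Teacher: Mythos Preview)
Your argument is correct. For (i) $\Leftrightarrow$ (ii) and (ii) $\Rightarrow$ (iii) you follow essentially the paper's route (Corollary~\ref{cor:evpos-semigroups-c_k} together with Proposition~\ref{prop:projections}, then the power-method limit $[\spr(B)^{-1}B]^k\to P$).

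Where you diverge is in closing the cycle. The paper proves (iii) $\Rightarrow$ (i) directly by a Taylor-series estimate: writing $e^{tB}=\sum_{k<k_0}t^kB^k/k!+\sum_{k\ge k_0}t^kB^k/k!$, the strongly positive tail dominates the finite head for large $t$, yielding $e^{tB}\gg 0$. You instead prove (iii) $\Rightarrow$ (ii) by applying the classical Perron--Frobenius theorem to the strictly positive matrix $B^{k_0}$ and then lifting the conclusion back to $B$ via the commutation $B\cdot B^{k_0}=B^{k_0}\cdot B$. Both routes are sound; the paper's is shorter and self-contained, while yours has the mild advantage of making explicit why $\spr(B)$ (rather than some other $k_0$-th root of $\spr(B)^{k_0}$) is the dominant eigenvalue of $B$---a point which is implicit in the paper's cycle but not isolated. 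One expository remark: in your (iii) $\Rightarrow$ (ii), the claim ``every $\mu\in\sigma(B)\setminus\{\spr(B)\}$ satisfies $|\mu|<\spr(B)$'' logically requires knowing $\spr(B)\in\sigma(B)$, which you only establish afterwards in the ``delicate point''; reordering those two steps would make the argument cleaner.
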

\begin{proof}
  ```(i) $\Rightarrow$ (ii)'' If (i) holds, then
  Theorem~\ref{thm:evpos-semigroups-c_k} implies that $\spb(A)$ is a
  dominant eigenvalue and that the corresponding spectral projection $P$
  is strongly positive. Hence (ii) follows from
  Proposition~\ref{prop:projections}.

  ``(ii) $\Rightarrow$ (iii)'' According to
  Proposition~\ref{prop:projections}, $\spb(A)$ is an algebraically
  simple eigenvalue with spectral projection $P\gg 0$.  The matrix $A$
  has finitely many eigenvalues $\lambda_k$, $k=1,\dots,m$, other than
  $\spb(A)$. As $\spb(A)$ is dominant, there exists $c\geq 0$ such that
  $\{\lambda_1,\dots,\lambda_m\}$ is contained in the open ball of
  radius $\spb(A)+c > 0$ around $-c$. Hence, $\spb(A)+c$ is the only
  eigenvalue of $A+c I$ of modulus $r:=\spr(A+c I)=\spb(A)+c$ and so
  $\spr\bigl((A+c I)( I-P)\bigr)<r$. It follows that
  \begin{displaymath}
    \lim_{k\to\infty}\bigl[r^{-1}(A+c I)\bigr]^k
    =P+\lim_{k\to\infty}\bigl[r^{-1}(A+c I)(I-P)\bigr]^k
    =P\gg 0.
  \end{displaymath}
  This is the well-known \emph{power method} for computing the dominant
  eigenvalue, see for instance \cite[Theorem~8.2.8]{MR832183}. As $P\gg
  0$ we conclude that $(A+c I)^k\gg 0$ for $k$ large enough.

  ``(iii) $\Rightarrow$ (i)'' We essentially follow the proof from
  \cite[Theorem~3.3]{Noutsos2008}. Set $B:=A+c I$ and assume that
  $B^k\gg 0$ for all $k\geq k_0$. Then
  \begin{displaymath}
    e^{tB}
    =\sum_{k=0}^{k_0-1}\frac{t^k}{k!}B^k
    +\sum_{k=k_0}^{\infty}\frac{t^k}{k!}B^k
    \gg t^{k_0}\Bigl(\frac{1}{k_0!}B^{k_0}
    +\sum_{k=0}^{k_0-1}\frac{t^{k-k_0}}{k!}B^k\Bigr)
  \end{displaymath}
  As $B^{k_0}\gg 0$ and
  \begin{displaymath}
    \lim_{t\to\infty}\sum_{k=0}^{k_0-1}\frac{t^{k-k_0}}{k!}B^k=0
  \end{displaymath}
  there exists $t_0>0$ such that $e^{tB}\gg 0$ for all $t>t_0$. Now (i)
  follows since $e^{tA}=e^{-ct}e^{tB}$.
  
  Finally, (ii) and Proposition~\ref{prop:projections} imply that
  $\spb(A)$ is an algebraically simple eigenvalue of $A$.
\end{proof}

Note that the other conditions in Proposition~\ref{prop:projections} and
Theorem~\ref{thm:evpos-resolvents-c_k} together with
Corollary~\ref{cor:evpos-semigroups-c_k} yield further characterisations
of the eventual positivity of a matrix semigroup. However, since those
assertions are not simplified in the matrix case, we see no reason to
restate them explicitly here.

Let us briefly compare
Theorem~\ref{theorem_characterisation_of_evtl_strongly_positive_sg_in_finite_dim}
with \cite[Theorem~3.3]{Noutsos2008}. Conditions (i) and (iii) in our
Theorem above appear also in \cite[Theorem~3.3]{Noutsos2008} as
conditions (iv) and (ii); our condition (ii) is very similar to
condition (i) there. The latter condition is formulated in terms of the
spectral radius and can easily be rewritten into our condition on the
spectral bound, except for one small difference: the condition in
\cite{Noutsos2008} assumes the spectral radius to be an algebraically
simple eigenvalue whereas we only assume the spectral bound to be
geometrically simple and then \emph{deduce} the algebraic simplicity.
Besides this difference in the assertion of the theorems, we note that
many arguments in \cite{Noutsos2008} are based on the fact that $A^k\gg 0$
for all $k$ large enough. Our proof of the
implication ``(i) $\Rightarrow$ (ii)'' in
Theorem~\ref{theorem_characterisation_of_evtl_strongly_positive_sg_in_finite_dim} 
is new, being based on the characterisations of the spectral projection developed in 
Sections~\ref{sec:projections}--\ref{sec:semigroups}, which have the 
advantage of being applicable in the case of unbounded operators in 
infinite dimensions.

We have seen in Remark \ref{rem:sg-rot}(a) that there are examples of
non-positive, eventually positive semigroups in three (and hence all
higher) dimensions. On the other hand, a one-dimensional real semigroup
is clearly always positive. We now show that in two dimensions, eventual
positivity implies positivity.

\begin{proposition}
  Let $A \in \bbR^{2 \times 2}$. If $(e^{tA})_{t \geq 0}$ is eventually
  (strongly) positive, then $e^{tA}$ is (strongly) positive for each
  $t>0$.
\end{proposition}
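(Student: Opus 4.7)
The plan is a case analysis on the Jordan form of $A$, which in two dimensions offers only three possibilities: (a) a non-real complex-conjugate eigenvalue pair, (b) two distinct real eigenvalues $\lambda_1 > \lambda_2$, or (c) a single real eigenvalue of algebraic multiplicity two. In each case I would produce an explicit formula for $e^{tA}$ and read off positivity directly for every $t \ge 0$.

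Case (a) is vacuous. Since $A$ is similar over $\bbR$ to $\bigl(\begin{smallmatrix}a&-b\\b&a\end{smallmatrix}\bigr)$, each entry of $e^{tA}$ has the form $e^{at}(\alpha\cos bt + \beta\sin bt)$ for some $\alpha,\beta\in\bbR$, with $b\ne 0$. Such a function is non-negative for all large $t$ only if $\alpha=\beta=0$, which would make that entry vanish identically and contradict invertibility of $e^{tA}$. Alternatively, in the strongly positive sub-case one may invoke Theorem~\ref{theorem_characterisation_of_evtl_strongly_positive_sg_in_finite_dim}, which forces $\spb(A)$ to be dominant and thereby rules out non-real eigenvalues.

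In case (b), let $P_1, P_2$ denote the spectral projections onto the eigenspaces of $\lambda_1,\lambda_2$, so that $P_1+P_2=I$ and
\[
  e^{tA} \;=\; e^{\lambda_2 t}\bigl[(e^{(\lambda_1-\lambda_2)t}-1)P_1 + I\bigr].
\]
Multiplying by $e^{-\lambda_1 t}$ and letting $t\to\infty$, eventual positivity forces $P_1\ge 0$ entrywise. Since $e^{(\lambda_1-\lambda_2)t}-1\ge 0$ for every $t\ge 0$ and $I$ is entrywise non-negative, the bracketed expression is non-negative throughout, whence $e^{tA}\ge 0$ for every $t\ge 0$. For the strongly positive version, Theorem~\ref{theorem_characterisation_of_evtl_strongly_positive_sg_in_finite_dim} yields $P_1\gg 0$, and then the bracket has strictly positive entries for each $t>0$.

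Case (c) is handled by writing $A=\lambda I+N$ with $N^2=0$, so that $e^{tA}=e^{\lambda t}(I+tN)$. Parametrising $N$ as $\bigl(\begin{smallmatrix}p&q\\r&-p\end{smallmatrix}\bigr)$ with $p^2+qr=0$, eventual non-negativity of $I+tN$ forces $p=0$ (by comparing the two diagonal entries for large $t$) and then $q,r\ge 0$ with $qr=0$; the resulting $I+tN$ is non-negative for every $t\ge 0$. A non-trivial Jordan block automatically rules out strong eventual positivity, since $I+tN$ then always has a vanishing off-diagonal entry. The main points to be careful with are confirming that the three cases really are exhaustive and handling the small case-work in (c); the real substance of the argument lies in the spectral decomposition used in case (b).
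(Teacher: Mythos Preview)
Your proof is correct and follows the same overall case split as the paper (complex eigenvalues ruled out; repeated real eigenvalue; two distinct real eigenvalues), but the execution in the main case~(b) differs. The paper argues vectorwise and geometrically: for an initial vector $u(0)\ge 0$ the trajectory $t\mapsto au_1+bu_2e^{-\lambda_2 t}$ traces out a line segment joining $u(0)$ to its limit $au_1$, and since both endpoints lie in the convex cone $E_+$ the entire segment does. You instead work entrywise and algebraically, rewriting $e^{tA}=e^{\lambda_2 t}\bigl[(e^{(\lambda_1-\lambda_2)t}-1)P_1+I\bigr]$ and deducing $P_1\ge 0$ from the limit $e^{-\lambda_1 t}e^{tA}\to P_1$. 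Both arguments are elementary; yours is self-contained (you rule out complex eigenvalues directly rather than invoking Theorem~\ref{thm:spb-in-spectrum}) and makes the strongly positive sub-case transparent, while the paper's convexity picture is a little more conceptual and would generalise more readily to cones that are not coordinate cones. Your handling of case~(c) is also more explicit than the paper's one-line remark that $e^{tA}=I+tA$ is eventually positive iff $A\ge 0$, but the content is the same.
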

\begin{proof}
  As usual we assume that $\spb(A)=0$.  First suppose that $(e^{tA})_{t
    \ge 0}$ is eventually positive. Then $\lambda_1:=\spb(A)\in\mathbb
  R$ is an eigenvalue of $A$ as shown in
  Theorem~\ref{thm:spb-in-spectrum} below. Hence $A$ has two real
  eigenvalues. If $\lambda_1$ has multiplicity two, then either $A=0$
  and $e^{tA}= I\geq 0$, or $A$ is nilpotent and $e^{tA}=I+tA$ is
  eventually positive if and only if $A\geq 0$. In either case eventual
  positivity implies positivity.
  
  Now let $-\lambda_2<0$ be the second eigenvalue of $A$ with
  corresponding eigenvector $u_2$. The general solution of $\dot u=Au$
  is given by $u(t)=au_1+bu_2e^{-\lambda_2t}$ for constants
  $a,b\in\mathbb R$, where $u_1$ is an eigenvector for the eigenvalue
  $\spb(A) = 0$. If $u(0)=au_1+bu_2\geq 0$, then eventual positivity
  implies that
  \begin{equation}
    \label{eq:2d-ode}
    \lim_{t\to\infty}u(t)=\lim_{t\to\infty}(au_1+bu_2e^{-\lambda_2t})=au_1\geq 0.
  \end{equation}
  The trajectory for $t\geq 0$ is a line segment connecting $u(0)$ and
  $au_1$ and thus lies in the positive cone. Hence, $(e^{tA})_{t\geq 0}$
  is positive.
  
  Now assume the semigroup is eventually strongly positive. Then the
  spectral projection $P$ associated with $\spb(A) = 0$ is strongly
  positive and $A$ has two distinct eigenvalues; see
  Theorem~\ref{thm:evpos-semigroups-c_k} and
  Proposition~\ref{prop:projections}.  If $u(0) > 0$, then $au_1 = Pu(0)
  \gg 0$. Thus, either $u(0)=u(t)=au_1\gg 0$ for all $t\geq 0$ or the
  open line segment between $u(0)$ and $au_1$ is in the interior of the
  cone. Hence $(e^{tA})_{t\geq 0}$ is strongly positive.
\end{proof}

\subsection{The Dirichlet-to-Neumann operator}
\label{sec:dirichlet-neumann}
Let $\Omega$ be a bounded Lipschitz domain in $\bbR^n$ and
$\lambda\not\in\sigma(-\Delta)$, where $\Delta$ is the Dirichlet
Laplacian on $\Omega$. Given $\varphi\in L^2(\partial\Omega)$, let $u$
denote the unique solution of $\Delta u+\lambda u=0$ in $\Omega$ and
$u=\varphi$ on $\partial\Omega$. The Dirichlet-to-Neumann operator is
defined by
\begin{displaymath}
  D_\lambda\varphi:=\frac{\partial u}{\partial\nu},
\end{displaymath}
where $\nu$ is the outer unit normal to $\partial\Omega$. A proper
construction of $D_\lambda$ as the generator of a strongly continuous
analytic semigroup on $L^2(\partial\Omega)$ can be found in
\cite{MR3146835,arendt:12:fei}. It is shown in \cite{Daners2014} that
the semigroup $e^{-tD_\lambda}$ is not positive, but only eventually
positive for certain ranges of $\lambda>\lambda_1$, where $\lambda_1$ is
the first eigenvalue of the Dirichlet Laplacian $-\Delta$ on the unit
ball $\Omega=B_1(0)$ in $\mathbb R^2$.

Our goal here is to show that this observation continues to hold in
$C(\Gamma)$, where $\Gamma:=\partial B_1(0)$, and can be obtained using
Theorem~\ref{thm:evpos-semigroups-c_k}. However, to do so, we first need
to know that $e^{-tD_\lambda}$ is in fact a $C_0$-semigroup on
$C(\Gamma)$.  This is the subject of the main theorem in
\cite{MR1288366}. However, it appears that the proof given in
\cite{MR1288366} is not valid without restrictions on the zeroth order
term $a_0$ of the operator $\mathcal A$ (the general second order
elliptic operator considered there). The reason is that in the proofs
provided in \cite{MR1288366}, the Dirichlet-to-Neumann operator is the
one associated with the operator $\mathcal A+\alpha^2 I$ for some real
$\alpha$ large enough, not the one associated with $\mathcal A$ as
claimed in \cite[page~236]{MR1288366}. Hence it actually seems to be an
open problem to establish that $D_\lambda$ generates a $C_0$-semigroup
on $C(\partial\Omega)$ whenever $\lambda > \lambda_1$ is not in the
spectrum of the corresponding Dirichlet Laplacian. Note also that the
conclusion on positivity in the main theorem of \cite{MR1288366} is
not true for the whole range of $\lambda\in\mathbb R$, as was pointed
out in \cite[page~237]{Daners2014}.

Here, to have at least one example (that of the disk in $\bbR^2$) valid
for the complete range of admissible $\lambda\in\mathbb R$, we start by
providing a proof of the following theorem.
\begin{theorem}
  \label{thm:DN-on-C}
  Let $\Gamma=\partial B_1(0)$ be the unit circle in $\bbR^2$ and let
  $\lambda\in\bbR\setminus\sigma(-\Delta)$. When restricted to
  $C(\Gamma)$, the family $(e^{-tD_\lambda})_{t \geq 0}$ is a
  $C_0$-semigroup on $C(\Gamma)$.
\end{theorem}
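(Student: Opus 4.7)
The plan is to write $D_\lambda$ on $C(\Gamma)$ as a bounded perturbation of $D_0$, the Dirichlet-to-Neumann operator for the Laplace equation, and then apply the Phillips bounded perturbation theorem. I first use the classical fact that the Poisson integral
\[
(P(t)\varphi)(\theta) := \frac{1}{2\pi}\int_0^{2\pi}\frac{1 - e^{-2t}}{1 - 2e^{-t}\cos(\theta - \psi) + e^{-2t}}\varphi(\psi)\,d\psi
\]
defines a $C_0$-contraction semigroup on $C(\Gamma)$ whose generator extends $-D_0$. This follows from the representation $P(t)\varphi(\theta) = (H\varphi)(e^{-t},\theta)$, where $H\varphi \in C(\overline{B_1(0)})$ is the harmonic extension of $\varphi$, combined with uniform convergence $H\varphi(r,\phdot) \to \varphi$ as $r\uparrow 1$ and differentiation at $t=0$; it is also the standard subordination result that $-(-\Delta_\Gamma)^{1/2}$ generates a $C_0$-semigroup on $C(\Gamma)$.

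Next, for $\varphi\in C(\Gamma)$ I decompose the unique solution $u$ of $\Delta u + \lambda u = 0$ with $u|_\Gamma = \varphi$ as $u = H\varphi + w_\varphi$, where $w_\varphi \in H^1_0(B_1(0))$ is the unique weak solution of
\[
-\Delta w_\varphi - \lambda w_\varphi = \lambda H\varphi \quad \text{in } B_1(0);
\]
this is well-posed because $\lambda\not\in\sigma(-\Delta)$. Since $H\varphi \in L^p(B_1(0))$ for every $p<\infty$ and the disk is smooth, elliptic $L^p$-regularity yields $w_\varphi \in W^{2,p}(B_1(0))$ with a norm bound linear in $\|\varphi\|_{C(\Gamma)}$. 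Taking $p$ large and using Sobolev embedding gives $w_\varphi \in C^{1,\alpha}(\overline{B_1(0)})$, so that $K_\lambda\varphi := \partial_\nu w_\varphi|_\Gamma$ defines a bounded (indeed compact) linear operator $K_\lambda \colon C(\Gamma) \to C(\Gamma)$. On the smooth boundary data where everything is classical one has $D_\lambda\varphi = D_0\varphi + K_\lambda\varphi$. The Phillips bounded perturbation theorem then gives a $C_0$-semigroup $(T(t))_{t\geq 0}$ on $C(\Gamma)$ generated by the $C(\Gamma)$-operator $-D_0 - K_\lambda$.

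Finally, to identify $T(t)$ with the restriction of $(e^{-tD_\lambda})_{t \geq 0}$ from $L^2(\Gamma)$, I would verify that for sufficiently large $\mu>0$ the resolvents $(\mu I + D_0 + K_\lambda)^{-1}$ on $C(\Gamma)$ and $(\mu I + D_\lambda)^{-1}$ on $L^2(\Gamma)$ agree on $C(\Gamma)\subseteq L^2(\Gamma)$. Indeed, for $f\in C^\infty(\Gamma)$ both yield the unique function $\psi$ solving $(\mu I + D_\lambda)\psi = f$; since $C^\infty(\Gamma)$ is dense in $C(\Gamma)$, uniqueness of the resolvent on the dense subspace forces equality, and hence equality of the two $C_0$-semigroups on $C(\Gamma)$. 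The main obstacle is the elliptic regularity estimate that makes $K_\lambda$ bounded on $C(\Gamma)$, together with the careful identification of the $C(\Gamma)$-domain of the Poisson semigroup generator needed for the perturbation theorem; once these are in place, the identification with the $L^2$-semigroup is routine.
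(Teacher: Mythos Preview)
Your approach is sound and genuinely different from the paper's. The paper proves strong continuity on $C(\Gamma)$ directly from the convolution kernel $G_{\lambda,t}$: it uses the Fej\'er-kernel expansion of $G_{\lambda,t}$ from \cite{Daners2014} to establish a uniform $L^1$-bound and off-diagonal decay of the kernel, and then runs an Arzel\`a--Ascoli argument to show $e^{-tD_\lambda}\varphi\to\varphi$ in $C(\Gamma)$. You instead write $D_\lambda=D_0+K_\lambda$ and observe that $K_\lambda$ is bounded on $C(\Gamma)$ (elliptic $L^p$-regularity for the auxiliary Dirichlet problem $-\Delta w-\lambda w=\lambda H\varphi$ gives $w\in W^{2,p}\hookrightarrow C^{1,\alpha}$, hence $\partial_\nu w\in C(\Gamma)$ with norm controlled by $\|\varphi\|_\infty$), so Phillips applies once you know the Poisson semigroup is $C_0$ on $C(\Gamma)$; the resolvent identification on $C^\infty(\Gamma)$ then pins down the semigroup. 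The paper's route is self-contained and yields explicit kernel estimates that feed into the subsequent non-positivity calculation; your route is more conceptual and, notably, is not tied to the circle --- the same argument works on any $C^2$ bounded domain once a single base case $-D_{\mu_0}$ is known to generate on $C(\partial\Omega)$, which is relevant to the open problem the paper flags just before the theorem. Two places where you should tighten the write-up: for the identification step it is cleanest to start from $\psi:=(\mu+D_\lambda^{L^2})^{-1}f$ with $f\in C^\infty(\Gamma)$, use that $\psi\in C^\infty(\Gamma)$ by ellipticity, and verify that $\psi$ solves the $C(\Gamma)$-resolvent equation (this avoids having to describe $D(D_0^C)$ explicitly); and you should state clearly that the Poisson-semigroup generator on $C(\Gamma)$ is the closure of $-D_0$ on trigonometric polynomials, not $-D_0$ with an ad hoc domain.
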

\begin{proof}
  The semigroup can be represented by a convolution kernel
  \begin{displaymath}
    e^{-tD_\lambda}\varphi
    =G_{\lambda,t}*\varphi
    =\int_{-\pi}^\pi G_{\lambda,t}(\phdot -s)\varphi(s)\,ds,
  \end{displaymath}
  where the kernel $G_{\lambda,t}$ is given by the Fourier series
  \begin{displaymath}
    G_{\lambda,t}(\theta)
    =\frac{1}{2\pi}\sum_{k=-\infty}^\infty e^{-t\mu_k(\lambda)}e^{ik\theta}.
  \end{displaymath}
  Here, $\mu_k(\lambda)=\mu_{-k}(\lambda)$ are the eigenvalues of
  $D_\lambda$ with eigenfunctions $e^{\pm ik\theta}$. As
  $\mu_k(\lambda)$ behaves asymptotically like $k$ as $k\to\infty$, the
  Fourier coefficients of $G_{\lambda,t}$ decay exponentially; see
  \cite[Lemma~4.2]{Daners2014}. Hence, as $C(\Gamma)\hookrightarrow
  L^2(\Gamma)$ we conclude that $e^{-tD_\lambda}$ is analytic as a map
  from $(0,\infty)$ into $\calL\bigl(C(\Gamma)\bigr)$. We only need to
  prove the strong continuity at $t=0$. As shown in
  \cite[Proposition~4.6]{Daners2014} we can represent $G_{\lambda,t}$ in
  terms of the Fej\'er kernels $K_n\geq 0$ in the form
  \begin{displaymath}
    G_{\lambda,t}(\theta)=\sum_{n=1}^\infty nb_n(\lambda,t)K_{n-1}(\theta)
  \end{displaymath}
  with $b_n(\lambda,t) :=e^{-t\mu_{n+1}(\lambda)}
  +e^{-t\mu_{n-1}(\lambda)} -2e^{-t\mu_n(\lambda)}$. As shown in
  \cite[Proposition~4.6]{Daners2014}, for fixed $\lambda$, there exists
  $n_0\geq 1$ such that $b_n(\lambda,t)\geq 0$ for all $n\geq n_0$ and
  all $t>0$. An elementary but not entirely trivial argument now yields
  \begin{align}
    M:=\sup_{t\in(0,1]}\int_{-\pi}^\pi |G_{\lambda,t}(s)|\,ds&<\infty,
    \label{eq:kernel-1}\\
    \lim_{t\to 0}\int_{\alpha}^{2\pi-\alpha} |G_{\lambda,t}(s)|\,ds&=0
    &&\text{for all $\alpha\in(0,\pi)$.}\label{eq:kernel-2}
  \end{align}
  Using these properties we now show that for every $\varphi\in
  C(\Gamma)$ the family $e^{-tD_\lambda}\varphi=G_{\lambda,t}*\varphi$,
  $t\in(0,1]$ is bounded and equicontinuous and therefore relatively
  compact in $C(\Gamma)$ by the Arzel\`a--Ascoli theorem. First, we
  obtain from \eqref{eq:kernel-1} that
  \begin{displaymath}
    \sup_{t\in (0,1]}|u(t)|
    =\Bigl|\int_{-\pi}^\pi G_{\lambda,t}(\theta-s)\varphi(s)\,ds\Bigr|
    \leq M\|\varphi\|_\infty,
  \end{displaymath}
  so the family is bounded. As $u(t):=e^{-tD_\lambda}\varphi\to\varphi$
  in $L^2(\Gamma)$ as $t\to 0$ this implies that we also have
  convergence in $C(\Gamma)$. Indeed, for fixed $\alpha\in(0,\pi)$ we
  have
  \begin{displaymath}
    \begin{split}
      |u(t,\theta+\eta)&-u(t,\theta)|\\
      &=\Bigl|\int_{-\pi}^\pi G_{\lambda,t}(\theta+\eta-s)\varphi(s)\,ds
      -\int_{-\pi}^\pi G_{\lambda,t}(\theta-s)\varphi(s)\,ds\Bigr|\\
      &\leq\int_{-\pi}^\pi |G_{\lambda,t}(\theta-s)|
      |\varphi(s-\eta)-\varphi(s)|\,ds\\
      &\leq 4\int_{\alpha}^{\pi} |G_{\lambda,t}(s)|\,ds\|\varphi\|_\infty\\
      &\qquad\qquad+\int_{-\alpha}^\alpha |G_{\lambda,t}(s)|\,ds
      \sup_{s\in[-\pi,\pi]}|\varphi(s-\eta)-\varphi(s)|\\
      &\leq 4\int_{\alpha}^{\pi}
      |G_{\lambda,t}(s)|\,ds\|\varphi\|_\infty
      +M\sup_{s\in[-\pi,\pi]}|\varphi(s-\eta)-\varphi(s)|
    \end{split}
  \end{displaymath}
  for all $t\in(0,1]$, where we used \eqref{eq:kernel-1} in the last
  inequality.  Fix $\varepsilon>0$. Due to the uniform continuity of
  $\varphi$ on the compact set $\Gamma$ we can choose $\delta>0$ such
  that
  \begin{displaymath}
    M\sup_{s\in[-\pi,\pi]}|\varphi(s-\eta)-\varphi(s)|
    <\frac{\varepsilon}{2}
  \end{displaymath}
  whenever $|\eta|<\delta$. By \eqref{eq:kernel-2} there exists $t_0>0$
  such that
  \begin{displaymath}
    4\int_{\alpha}^{\pi} |G_{\lambda,t}(s)|\,ds\|\varphi\|_\infty
    <\frac{\varepsilon}{2}
  \end{displaymath}
  for all $t\in(0,t_0]$. Hence,
  \begin{equation}
    \label{eq:equicont}
    |u(t,\theta+\eta)-u(t,\theta)|
    <\frac{\varepsilon}{2}+\frac{\varepsilon}{2}
    =\varepsilon 
  \end{equation}
  whenever $|\eta|<\delta$ and $t\in(0,t_0]$. As $u\in
  C([t_0,1],C(\Gamma))$ there exists a possibly smaller $\delta>0$ such
  that \eqref{eq:equicont} holds whenever $|\eta|<\delta$ and
  $t\in(0,1]$. Hence, $u(t)\to u(0)$ in $C(\Gamma)$, showing that
  $e^{-tD_\lambda}$ is a strongly continuous semigroup on $C(\Gamma)$.
\end{proof}
We finally consider the positivity properties of the semigroup.
Regarding eventual positivity, Theorem~\ref{thm:evpos-semigroups-c_k}
allows us to prove the following proposition. In fact, if one considers
the Fourier series representation of the semigroup, one can actually
show that it is \emph{uniformly} eventually strongly positive.
\begin{proposition}
  \label{prop:DN-evpos}
  There exists $\lambda^* \in (\lambda_3,\lambda_4)$ such that
  $e^{-tD_\lambda}$ is individually eventually strongly positive but not
  positive on $C(\Gamma)$ for all $\lambda \in (\lambda_3,\lambda^*)$.
\end{proposition}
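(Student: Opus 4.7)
The plan is to verify the hypotheses of Corollary~\ref{cor:evpos-semigroups-c_k} for the real $C_0$-semigroup $(e^{-tD_\lambda})_{t\ge 0}$ on $C(\Gamma)$ just produced by Theorem~\ref{thm:DN-on-C}. What can be read off immediately: the semigroup is analytic (as noted in the proof of Theorem~\ref{thm:DN-on-C}), hence eventually norm continuous; and $D_\lambda$ has compact resolvent, since its spectrum consists precisely of the real eigenvalues $\mu_k(\lambda)$, $k\in\bbZ$, with $\mu_k(\lambda)=\mu_{-k}(\lambda)\to\infty$ as $|k|\to\infty$ and eigenfunctions $e^{\pm ik\theta}$. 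Consequently every spectral value is a pole of the resolvent and the peripheral spectrum is automatically finite, so Corollary~\ref{cor:evpos-semigroups-c_k} reduces individual eventual strong positivity of $(e^{-tD_\lambda})_{t\ge 0}$ to showing that $\spb(-D_\lambda)=-\min_k\mu_k(\lambda)$ is a dominant spectral value of $-D_\lambda$ and that the associated spectral projection $P$ satisfies $P\gg 0$.

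The analytic core is to produce $\lambda^*\in(\lambda_3,\lambda_4)$ such that $\mu_0(\lambda)<\mu_k(\lambda)$ holds strictly for every $k\ne 0$ and every $\lambda\in(\lambda_3,\lambda^*)$. For this I would use the explicit logarithmic-derivative formulas for $\mu_k(\lambda)$ in terms of $J_k(\sqrt{\lambda})$ recorded in \cite{Daners2014}: each $\mu_k$ is real-analytic on each interval between consecutive zeros of $J_k(\sqrt{\phdot})$ and blows up at those zeros. Just above $\lambda_3$ the branch $\mu_1$ emerges from such a blow-up at large positive values, while $\mu_0$ and $\mu_k$ for $k\ge 2$ are analytic on a full neighbourhood of $\lambda_3$; since $\mu_0(\lambda)$ has already become very negative for $\lambda$ slightly greater than $\lambda_1$, one has $\mu_0(\lambda)<\mu_k(\lambda)$ for every $k\ne 0$ in some right neighbourhood of $\lambda_3$. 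As $\lambda$ increases toward $\lambda_4$, the branch $\mu_2$ approaches its own blow-up at $\lambda_4$, forcing it to cross $\mu_0$ at some largest $\lambda^*<\lambda_4$ beyond which strict dominance fails; this $\lambda^*$ is the desired threshold.

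Given dominance, $P\gg 0$ is immediate from Proposition~\ref{prop:projections}: the eigenspace $\ker(\mu_0(\lambda)I-D_\lambda)$ is spanned by the strongly positive constant function $\one$, and the formal self-adjointness of $D_\lambda$ on $L^2(\Gamma)$ identifies the corresponding dual eigenfunctional with integration against arc-length measure on $\Gamma$, which is strongly positive on $C(\Gamma)$. Corollary~\ref{cor:evpos-semigroups-c_k} then delivers individual eventual strong positivity. For non-positivity I would appeal to \cite{Daners2014}, where $(e^{-tD_\lambda})_{t\ge 0}$ is shown to fail to be positive on $L^2(\Gamma)$ for $\lambda$ in the relevant range; if it were positive on $C(\Gamma)$, then approximating an arbitrary $\varphi\in L^2(\Gamma)_+$ by elements of $C(\Gamma)_+$ and using consistency of the $C(\Gamma)$- and $L^2(\Gamma)$-realisations together with $L^2$-continuity of the semigroup would extend positivity to $L^2(\Gamma)$, a contradiction.

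The main obstacle is the Bessel-function bookkeeping of the second paragraph, namely establishing strict dominance of $\mu_0$ by every other $\mu_k$ throughout a genuine right neighbourhood of $\lambda_3$ and locating the crossing point $\lambda^*<\lambda_4$. The remaining ingredients---compact resolvent, analyticity, identification of the eigenvector as $\one$ and of the dual eigenfunctional as integration, and the transfer of non-positivity from $L^2(\Gamma)$ to $C(\Gamma)$---are either routine or are packaged by the abstract framework developed in Sections~\ref{sec:projections}--\ref{sec:semigroups}.
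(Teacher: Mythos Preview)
Your overall strategy---verify the hypotheses of Corollary~\ref{cor:evpos-semigroups-c_k} via dominance of $\mu_0(\lambda)$ and strong positivity of the resulting rank-one projection $P\varphi=\bigl(\int_{-\pi}^{\pi}\varphi\,d\theta\bigr)\one$---is exactly the paper's. However, both your Bessel bookkeeping and your interpretation of $\lambda^*$ are wrong, and the latter is a genuine gap.

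The Dirichlet eigenvalues on the disc are ordered $\lambda_1=j_{0,1}^2<\lambda_2=j_{1,1}^2<\lambda_3=j_{2,1}^2<\lambda_4=j_{0,2}^2$, so it is $\mu_2$ (not $\mu_1$) that has its pole at $\lambda_3$, and $\mu_0$ (not $\mu_2$) that has a pole at $\lambda_4$. More importantly, the paper shows that $\mu_0(\lambda)$ is the strict minimum of all $\mu_k(\lambda)$ for \emph{every} $\lambda\in(\lambda_3,\lambda_4)$: dominance never fails on this interval, so individual eventual strong positivity holds throughout $(\lambda_3,\lambda_4)$. The threshold $\lambda^*$ therefore does not arise from a loss of dominance; it comes entirely from the \emph{non-positivity} claim. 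Near $\lambda_4$ the semigroup is actually positive (this is in \cite{Daners2014}), while near $\lambda_3$ the paper exhibits non-positivity directly: with the Fej\'er kernel $u_0=2K_3\ge 0$, which satisfies $u_0(\pi)=0$, one computes
\[
\dot u_\lambda(\pi,0)=-2\mu_0(\lambda)+3\mu_1(\lambda)-2\mu_2(\lambda)+\mu_3(\lambda),
\]
and this tends to $-\infty$ as $\lambda\downarrow\lambda_3$ precisely because $\mu_2(\lambda)\to+\infty$ there. Your alternative route to non-positivity---transferring the $L^2$ failure from \cite{Daners2014} to $C(\Gamma)$ by density and consistency---is sound in principle, but you would still need to know for \emph{which} $\lambda$ the $L^2$-semigroup fails to be positive, and that is established by exactly this computation.
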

There are in fact infinitely many small intervals in which this
holds. We merely discuss one in detail, as an illustration of the
principle.
\begin{proof}
  Recall that $\lambda\in\sigma(-\Delta)$ if and only if
  $J_k(\sqrt{\lambda})=0$ for some Bessel function $J_k$, $k\in\mathbb
  N$. It is shown in \cite{Daners2014} that the eigenvalues of
  $D_\lambda$ are of the form
  \begin{equation}
    \label{eq:DN-eigenvalues}
    \mu_k(\lambda)=\frac{\sqrt{\lambda}J_k'(\sqrt{\lambda})}
    {J_k(\sqrt{\lambda})}
  \end{equation}
  with the corresponding eigenspaces spanned by $\one$ if $k=0$ and by
  $\cos(kt)$, $\sin(kt)$ if $k\geq 1$. A plot of the first few
  eigenvalues as a function of $\lambda$ is shown in
  \cite[Fig.~3]{Daners2014}. The curves have vertical asymptotes at the
  strictly ordered eigenvalues $\lambda_1<\lambda_2<\lambda_3<\dots$ of
  the negative Dirichlet Laplacian on the unit disc $B_1(0)$. The only
  eigenvalue having a strictly positive eigenvector, namely $\one$, is
  $\mu_0(\lambda)$. The corresponding projection is given by
  \begin{displaymath}
    P\varphi=\int_{-\pi}^\pi\varphi(\theta)\,d\theta\one
  \end{displaymath}
  and hence $P\gg 0$. However, note that $\mu_0(\lambda)$ is not always
  the dominant eigenvalue.

  From the explicit values for $\mu_k(\lambda)$ given in
  \eqref{eq:DN-eigenvalues} we can see that $\mu_0(\lambda)$ is dominant
  if $\lambda\in(\lambda_3,\lambda_4)$. This can also clearly be seen
  from \cite[Fig.~3]{Daners2014}, where $\mu_0(\lambda)$ is represented
  by a solid line. Hence, we conclude from
  Theorem~\ref{thm:evpos-semigroups-c_k} that $e^{-tD_\lambda}$ is
  individually eventually positive for
  $\lambda\in(\lambda_3,\lambda_4)$. It is shown in \cite{Daners2014}
  that $(e^{-tD_\lambda})_{t\geq 0}$ is a positive semigroup for
  $\lambda$ close enough to $\lambda_4$. We now show that it is not
  positive if $\lambda$ is in a right neighbourhood of $\lambda_3$. To
  do so we take as an initial condition the Fej\'er kernel
  \begin{equation}
    \label{eq:K3}
    u_0(\theta):=2K_3(\theta)=
    2+3\cos\theta+2\cos2\theta+\cos3\theta
    =\frac{1}{2}\left(\frac{\sin(2\theta)}{\sin(\theta/2)}\right)^2;
  \end{equation}
  see \cite[p.~12]{katznelson:04:iha}. Let
  $u_\lambda(t):=e^{-tD_\lambda}u_0$. We show that $u_\lambda(t)$ is not
  positive for $t$ sufficiently small if $\lambda$ is in a right
  neighbourhood of $\lambda_3$. We do that by showing that $u_\lambda$
  has a negative time derivative at a point where the initial condition
  is zero. Using the formula from \cite[Proposition~4.3(ii)]{Daners2014}
  we see that
  \begin{displaymath}
    \begin{split}
      \dot u_\lambda(\phdot,0)
      &=\frac{d}{dt}e^{-tD_\lambda}u_0\Bigr|_{t=0}
      =-D_\lambda u_0\\
      &=-2\mu_0(\lambda)-3\mu_1(\lambda)\cos\theta
      -2\mu_2(\lambda)\cos2\theta-\mu_3(\lambda)\cos3\theta .
    \end{split}
  \end{displaymath}
  Clearly $u_0> 0$, $u_0(\pi)=0$ and
  \begin{equation}
    \label{eq:DN-derivative_at_zero}
    \dot u_\lambda(\pi,0)
    =-2\mu_0(\lambda)+3\mu_1(\lambda)-2\mu_2(\lambda)+\mu_3(\lambda)
  \end{equation}
  for all $\lambda\in(\lambda_3,\lambda_4)$. Further note that
  $J_2(\sqrt{\lambda_3})=0$, so that $\mu_2(\lambda)\to\infty$ as
  $\lambda\downarrow\lambda_3$; see \cite[p.~244]{Daners2014}. As the
  eigenvalues $\mu_0(\lambda),\mu_1(\lambda)$ and $\mu_3(\lambda)$
  remain bounded in a right neighbourhood of $\lambda_3$,
  \eqref{eq:DN-derivative_at_zero} implies that $\dot
  u_\lambda(\pi,0)\to-\infty$ as $\lambda\downarrow\lambda_3$. This can
  be seen in \cite[Fig.~3]{Daners2014}. In particular, because
  $u(\pi,0)=u_0(0)=0$ we conclude that $\dot u_\lambda(\pi,0)<0$ if
  $\lambda$ is in a right neighbourhood of $\lambda_3$. Hence
  $e^{-tD_\lambda}u_0$ is not positive, but only eventually positive.
\end{proof}

\subsection{Squares of Generators}
We saw in Proposition~\ref{prop_square_of_resolvent_pos_operator_c_k}
that the resolvent of the operator $A:=(iB)^2=-B^2$ is eventually
positive at $\lambda_0 < 0$ if $(\lambda_0,0] \subseteq\rho(A)$ and if
$B$ is resolvent positive in $0$.  We now show that such a situation
gives rise to eventually positive semigroups. However, note that even if
$B$ generates a strongly continuous semigroup, that is not automatically
the case for $A=-B^2$. There are special conditions when this is the
case, namely if $B$ generates a group; see
\cite[Theorem~A-II-1.15]{Arendt1986} or
\cite[Corollary~II.4.9]{Engel2000}. We do not wish to assume this, 
but instead work with sectorial operators.

Let us therefore recall some important notions: Let $E$ be a complex
Banach space and $\theta \in (0,\pi]$. By $\Sigma_\theta :=
\{re^{i\varphi}\colon\text{$r > 0$, $\varphi \in (-\theta,\theta)$}\}$
we denote the \emph{open sector of angle $\theta$}. Now, let $\theta \in
(0,\pi/2]$. A $C_0$-semigroup $(e^{tA})_{t \ge 0}$ is called
\emph{analytic of angle $\theta$} if it has an extension $(e^{zA})_{z
  \in \Sigma_\theta \cup \{0\}}$ which is analytic on $\Sigma_{\theta}$
and which is bounded on $\{z \in \Sigma_{\theta'}\colon |z| < 1\}$ for
each $\theta' \in (0,\theta)$.  The semigroup is called \emph{analytic}
if it is analytic of some angle $\theta \in (0,\pi/2]$. The
$C_0$-semigroup $(e^{tA})_{t \ge 0}$ is called \emph{bounded analytic of
  angle $\theta \in (0,\pi/2]$} if it is analytic of angle $\theta$ and
if its extension is bounded on $\Sigma_{\theta'}$ for each $\theta' \in
(0,\theta)$. Finally, an operator $A$ on $E$ is called \emph{sectorial
  of angle $\theta \in (0,\pi/2]$} if $\rho(A) \supset \Sigma_{\pi/2 +
  \theta}$ and if $\sup_{\lambda \in \Sigma_{\pi/2 + \theta'}} \|\lambda
R(\lambda,A)\| <\infty$ for each $\theta' \in (0,\theta)$. Here we use
the definition of sectorial operators in
\cite[Definition~II.4.1]{Engel2000}, which differs from that in other
sources.

Let $\theta \in (0,\pi/2]$.  It is well known that a densely defined
operator $A$ generates a $C_0$-semigroup which is bounded analytic of
angle $\theta$ if and only if $A$ is sectorial of angle $\theta$, see
\cite[Theorem~3.7.11 and Corollary~3.3.11]{Arendt2011}.  For our
subsequent application to the Robin Laplace operator we will need the
following observation:

\begin{lemma}
  \label{lem:boundedness-of-holomorphic-semigroup}
  Let $(e^{tA})_{t \ge 0}$ be an analytic $C_0$-semigroup of angle
  $\theta \in (0,\pi/2]$ such that $\rho(A) \supset
  \Sigma_{\theta+\pi/2}$ and $0 \not\in \sigma(A)$.  Then $(e^{tA})_{t
    \ge 0}$ is bounded analytic of angle $\theta$.
\end{lemma}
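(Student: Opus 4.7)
The strategy is to verify that $A$ is sectorial of angle $\theta$ in the sense recalled above, since for densely defined operators this is equivalent to $(e^{tA})_{t\ge 0}$ being bounded analytic of angle $\theta$. The inclusion $\varrho(A) \supset \Sigma_{\pi/2+\theta}$ is given as a hypothesis, so the only thing left to prove is the resolvent estimate $\sup_{\lambda \in \Sigma_{\pi/2+\theta'}} \|\lambda R(\lambda,A)\| < \infty$ for every $\theta' \in (0,\theta)$.

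The starting point is the elementary fact that any analytic $C_0$-semigroup of angle $\theta$ becomes bounded analytic of angle $\theta$ after a rescaling: there exists $\omega \ge 0$ such that $(e^{t(A - \omega I)})_{t \ge 0}$ is bounded analytic of angle $\theta$. Consequently $A - \omega I$ is sectorial of angle $\theta$, so for each $\theta'' \in (0,\theta)$ there is a constant $M_{\theta''}$ such that $\|(\lambda - \omega) R(\lambda,A)\| \le M_{\theta''}$ whenever $\lambda \in \omega + \Sigma_{\pi/2+\theta''}$.

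Now fix $\theta' \in (0,\theta)$ and choose $\theta''$ with $\theta' < \theta'' < \theta$. I would split $\Sigma_{\pi/2+\theta'}$ into a far-field part $\{|\lambda| \ge R\}$ and a near-field part $\{|\lambda| \le R\}$. A short geometric argument shows that for $R$ sufficiently large (depending only on $\omega$, $\theta'$ and $\theta''$), every $\lambda \in \Sigma_{\pi/2+\theta'}$ with $|\lambda| \ge R$ also lies in $\omega + \Sigma_{\pi/2+\theta''}$; the key point is that as $|\lambda| \to \infty$ the argument of $\lambda - \omega$ converges to the argument of $\lambda$, uniformly on the compact set of admissible arguments $[-(\pi/2+\theta'),\pi/2+\theta']$. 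For such $\lambda$ the sectorial bound gives $\|\lambda R(\lambda,A)\| \le M_{\theta''}\,|\lambda|/|\lambda - \omega|$, which tends to $M_{\theta''}$ and is therefore uniformly bounded on the far-field (after possibly enlarging $R$).

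On the near-field, I would use the hypotheses directly. Since $\theta' < \theta$, the closed set $\overline{\Sigma_{\pi/2+\theta'}} \setminus \{0\}$ is contained in the open sector $\Sigma_{\pi/2+\theta} \subseteq \varrho(A)$, and by assumption $0 \in \varrho(A)$ as well, so $\overline{\Sigma_{\pi/2+\theta'}} \cap \overline{B(0,R)}$ is a compact subset of $\varrho(A)$ on which the analytic map $\lambda \mapsto R(\lambda,A)$ is automatically bounded. Combining the two estimates yields the required uniform bound on $\Sigma_{\pi/2+\theta'}$. The main subtlety, and the place where the hypothesis $0 \notin \sigma(A)$ enters essentially, is precisely this handling of the apex: without that assumption the resolvent could blow up as $\lambda \to 0$ along the sector, destroying the bound and preventing the passage from ``analytic'' to ``bounded analytic''.
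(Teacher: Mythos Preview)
Your argument is correct, but it proceeds along a different line from the paper's own proof. You verify sectoriality of $A$ directly by a far-field/near-field decomposition of the resolvent: on $\{|\lambda|\ge R\}\cap\Sigma_{\pi/2+\theta'}$ you reduce to the known sectorial estimate for $A-\omega I$ via the elementary observation that $\arg(\lambda-\omega)\to\arg\lambda$ uniformly, and on the compact set $\overline{\Sigma_{\pi/2+\theta'}}\cap\overline{B(0,R)}\subset\varrho(A)$ you use continuity of the resolvent, which is exactly where the hypothesis $0\in\varrho(A)$ is consumed. The paper instead argues on the semigroup side: for each $\theta'\in(0,\theta)$ the rotated operators $e^{\pm i\theta'}A$ generate analytic $C_0$-semigroups (by \cite[Proposition~3.7.2(d)]{Arendt2011}), and the assumptions $\varrho(A)\supset\Sigma_{\pi/2+\theta}$ together with $0\notin\sigma(A)$ force $\spb(e^{\pm i\theta'}A)<0$, whence $(e^{te^{\pm i\theta'}A})_{t\ge 0}$ are norm-bounded by some $M$; writing any $z\in\Sigma_{\theta'}$ as $z=se^{i\theta'}+te^{-i\theta'}$ with $s,t\ge 0$ then gives $\|e^{zA}\|\le M^2$. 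Your route is perhaps more self-contained since it avoids the rotation result and the equality $\spb=\gbd$ for analytic semigroups, relying only on the sectorial characterisation and a compactness argument; the paper's route is shorter once those standard tools are admitted and stays entirely on the semigroup side.
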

\begin{proof}
  Let $\theta' \in (0,\theta)$. Using
  \cite[Proposition~3.7.2(d)]{Arendt2011} we see that
  $(e^{te^{i\theta'}A})_{t \ge 0}$ and $(e^{te^{-i\theta'}A})_{t \ge 0}$
  are analytic $C_0$-semigroups. Moreover, $\spb(e^{i\theta'}A) < 0$ and
  $\spb(e^{-i\theta'}A) < 0$. Hence, both semigroups
  $(e^{te^{i\theta'}A})_{t \ge 0}$ and $(e^{te^{-i\theta'}A})_{t \ge 0}$
  converge to $0$ with respect to the operator norm and are therefore
  bounded in norm by some constant $M \ge 1$.  This in turn implies that
  $(e^{zA})_{z \in \Sigma_{\theta'}}$ is bounded by $M^2$, which shows
  the assertion.
\end{proof}

Let us now prove a result on squares of generators and eventual
positivity.

\begin{proposition}
  \label{prop:square-generator}
  Let $B$ generate an analytic $C_0$-semigroup of angle $\pi/2$ on $E =
  C(K)$, and suppose that $\sigma(B) \subset(-\infty,0)$ is non-empty and
  that $B$ has compact resolvent. If $R(0,B) \gg 0$, then $A := -B^2$
  generates an analytic $C_0$-semigroup on $E$ which is individually
  eventually strongly positive.
\end{proposition}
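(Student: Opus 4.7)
The plan is to apply Corollary~\ref{cor:evpos-semigroups-c_k} to $A = -B^2$. This requires verifying four ingredients: that $A$ generate an eventually norm continuous $C_0$-semigroup, that $\sigma_{\per}(A)$ be finite and consist of poles of $R(\phdot,A)$, that $\spb(A)$ be a dominant spectral value, and that the spectral projection $P$ of $A$ at $\spb(A)$ satisfies $P \gg 0$.

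The main obstacle is the first point, namely to show that $A$ generates an analytic $C_0$-semigroup. I would first apply Lemma~\ref{lem:boundedness-of-holomorphic-semigroup}: its hypotheses are satisfied because $\sigma(B) \subseteq (-\infty,0)$ forces $\rho(B) \supseteq \bbC \setminus (-\infty,0] = \Sigma_\pi$ and $0 \in \rho(B)$. This yields that $(e^{tB})_{t \geq 0}$ is bounded analytic of angle $\pi/2$, equivalently, that $B$ is sectorial of angle $\pi/2$. For each $\lambda \in \Sigma_\pi$, I would set $z := \sqrt{-\lambda}$ with $\repart z > 0$; then $\pm z$ avoid $(-\infty,0) \supseteq \sigma(B)$ (the only obstruction would be $\lambda \in (-\infty,0]$, which is excluded), so $\pm z \in \rho(B)$, and the factorisation $\lambda I - A = (B-zI)(B+zI)$ gives $R(\lambda,A) = R(z,B)R(-z,B)$. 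A short sector computation shows that for $\lambda \in \Sigma_{\pi/2+\theta'}$ with any $\theta' \in (0,\pi/2)$, both $\pm z$ lie in $\Sigma_{\pi/2+\alpha}$ for $\alpha := \pi/4 + \theta'/2 < \pi/2$, and the sectoriality bound for $B$ on this subsector then yields $\|\lambda R(\lambda,A)\| \leq C_\alpha^2$. Thus $A$ is sectorial of angle $\pi/2$ and generates an analytic $C_0$-semigroup.

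Compactness of $R(0,A) = R(0,B)^2$ gives that $A$ has compact resolvent, so $\sigma(A)$ consists of isolated eigenvalues, each a pole of $R(\phdot,A)$. Using the same factorisation and the fact that $-\mu \in \rho(B)$ whenever $\mu \in \sigma(B) \subseteq (-\infty,0)$, one sees $\sigma(A) = \{-\mu^2 : \mu \in \sigma(B)\} \subseteq (-\infty,0)$. In particular $\sigma_{\per}(A) = \{\spb(A)\}$ is a single pole, and $\spb(A)$ is a dominant spectral value.

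Finally, from $R(0,B) \gg 0$ and $(\spb(A), 0] \subseteq \rho(A)$, Proposition~\ref{prop_square_of_resolvent_pos_operator_c_k} applies and gives that $R(\phdot,A)$ is uniformly eventually strongly positive at $\spb(A)$. Theorem~\ref{thm:evpos-resolvents-c_k} then yields $P \gg 0$. With all hypotheses of Corollary~\ref{cor:evpos-semigroups-c_k} in place, we conclude that $(e^{tA})_{t \geq 0}$ is individually eventually strongly positive.
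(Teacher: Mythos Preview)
Your proof is correct and follows the same strategy as the paper: establish sectoriality of $A$ via Lemma~\ref{lem:boundedness-of-holomorphic-semigroup}, use compactness of the resolvent to get that $\spb(A)$ is a pole, invoke Proposition~\ref{prop_square_of_resolvent_pos_operator_c_k} and Theorem~\ref{thm:evpos-resolvents-c_k} to obtain $P \gg 0$, and conclude with Corollary~\ref{cor:evpos-semigroups-c_k}. The only difference is that where the paper writes ``as is well known (and easy to check)'' for the sectoriality of $A = -B^2$ and simply asserts $\sigma(A) \subseteq (-\infty,0)$, you supply the explicit factorisation $\lambda I - A = (B-zI)(B+zI)$ and the sector arithmetic, which is a welcome elaboration rather than a different approach.
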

\begin{proof}
  By Lemma~\ref{lem:boundedness-of-holomorphic-semigroup}, the operator
  $B$ is sectorial of angle $\pi/2$. As is well known (and easy to
  check), this implies that $A$ is sectorial of angle $\pi/2$, too.
  Hence, $A$ generates an analytic $C_0$-semigroup.
  
  Since $\sigma(A) \subseteq(-\infty,0)$ is non-empty, $\spb(A) < 0$ is
  clearly a dominant spectral value of $A$. By assumption $R(0,B) \gg 0$
  and hence an application of
  Proposition~\ref{prop_square_of_resolvent_pos_operator_c_k} shows that
  $R(\phdot,A)$ is uniformly eventually strongly positive at $\spb(A)$.
  As $B$ has compact resolvent, the same is true for $A$ and hence
  $\spb(A)$ is a pole of $R(\phdot,A)$, see
  \cite[Corollary~IV.1.19]{Engel2000}. Now
  Theorem~\ref{thm:evpos-resolvents-c_k} implies that the spectral
  projection of $A$ associated with $\spb(A)$ is strongly positive. Hence,
  Corollary~\ref{cor:evpos-semigroups-c_k} shows that $(e^{tA})_{t \geq
    0}$ is individually eventually strongly positive.
\end{proof}

\subsection{The square of the Robin Laplacian on $C(\overline{\Omega})$.}
We will apply Proposition~\ref{prop:square-generator} to a particular
operator, the Robin Laplacian. To that end, let $\Omega \subseteq\bbR^n$
be a bounded domain of class $C^2$ and let $\beta \in C^1(\partial
\Omega)$ with $\beta\gg 0$. Denote by $\Delta_R^c$ the realisation of
the Laplacian on $C(\overline{\Omega})$ subject to the Robin boundary
condition
\begin{displaymath}
  \frac{\partial}{\partial \nu} u + \beta u = 0 \quad \text{on } 
  \partial \Omega \text{.}
\end{displaymath}
It is shown in \cite[Theorems~8.2 and~6.1]{amann:83:dss} that
$\Delta_R^c$ generates a compact and strongly positive semigroup
on $C(\overline{\Omega})$, which is analytic of angle $\pi/2$ by
\cite[Theorem~3.3]{Warma2006}. Moreover, as $\beta\gg 0$, we certainly
have $\sigma(\Delta_R^c)\subseteq(-\infty,0)$ and clearly,
$\sigma(\Delta_R^c) \neq \emptyset$.
\begin{proposition}
  Under our assumptions on $\beta$ and $\Omega$, the operator $A =
  -(\Delta_R^c)^2$ generates a $C_0$-semigroup on $C(\overline{\Omega})$
  which is individually eventually strongly positive but not positive.
\end{proposition}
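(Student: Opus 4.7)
The plan is to apply Proposition~\ref{prop:square-generator} with $B:=\Delta_R^c$ to obtain the individual eventual strong positivity of $(e^{tA})_{t\ge 0}$, and then to rule out positivity via the result of \cite{ABR90} recalled in Section~\ref{sec:introduction}. The paragraph immediately preceding the statement already records three of the four hypotheses of Proposition~\ref{prop:square-generator}: $B$ generates a compact and strongly positive $C_0$-semigroup on $C(\overline{\Omega})$ which is analytic of angle $\pi/2$, and $\emptyset\neq\sigma(B)\subseteq(-\infty,0)$. Since $B$ has compact resolvent the spectrum consists of isolated eigenvalues accumulating only at $-\infty$, so in fact $\spb(B)<0$ strictly.

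The only non-trivial hypothesis left is $R(0,B)\gg 0$. Analyticity of $(e^{tB})_{t\ge 0}$ together with $\spb(B)<0$ gives $\gbd(B)=\spb(B)<0$, so the Laplace representation
\[
R(0,B)f=\int_0^\infty e^{tB}f\,dt
\]
is valid for every $f\in C(\overline{\Omega})$. Given $f>0$, strong positivity of the operator $e^{B}$ gives $e^{B}f\gg 0$, hence $e^{B}f\geq c\one$ for some $c>0$. By continuity of the orbit $t\mapsto e^{tB}f$ there exists $\varepsilon>0$ with $e^{tB}f\geq\tfrac{c}{2}\one$ for all $t\in[1-\varepsilon,1+\varepsilon]$, and therefore
\[
R(0,B)f\geq \int_{1-\varepsilon}^{1+\varepsilon} e^{tB}f\,dt \geq \varepsilon c\one\gg 0,
\]
which verifies the remaining hypothesis.

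With all hypotheses checked, Proposition~\ref{prop:square-generator} yields the individual eventual strong positivity of $(e^{tA})_{t\ge 0}$. For the non-positivity I would invoke \cite{ABR90}: on $C(\overline{\Omega})$, a $C_0$-semigroup whose generator is a differential operator can be positive only if the generator is second-order elliptic. Since $A=-(\Delta_R^c)^2$ is a differential operator of order four, $(e^{tA})_{t\ge 0}$ cannot be a positive semigroup. The main---indeed essentially the only---non-routine step is the verification of $R(0,B)\gg 0$; beyond that, the proof simply assembles results already cited in the paper.
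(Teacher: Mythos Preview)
Your proof is correct and takes essentially the same approach as the paper: apply Proposition~\ref{prop:square-generator} with $B=\Delta_R^c$ for eventual strong positivity, then invoke \cite{ABR90} for non-positivity (the paper cites specifically \cite[Proposition~2.2]{ABR90}, noting that the restriction of $A$ to $C_c^\infty(\Omega)$ is the bi-Laplacian). The only difference is that you spell out the verification of $R(0,B)\gg 0$ via the Laplace representation and strong positivity of $(e^{tB})_{t\ge 0}$, which the paper leaves to the reader.
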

\begin{proof}
  From the above discussion and from
  Proposition~\ref{prop:square-generator} it follows that $A$ generates
  an analytic $C_0$-semigroup $(e^{tA})_{t \geq 0}$ which is
  individually eventually strongly positive.  The semigroup $(e^{tA})_{t
    \geq 0}$ is not positive because the restriction of $A$ to
  $C_c^\infty(\Omega)$ is the bi-Laplacian acting on
  $C_c^\infty(\Omega)$, whose extensions cannot generate a positive
  semigroup by \cite[Proposition~2.2]{ABR90}.
\end{proof}

\subsection{A delay differential equation}
\label{subsection:appl-delay-equ}
We consider the time evolution of a complex value $y(t)$, where the rate
of change of $y(t)$ depends on the values of $y$ on the past time
interval $[t-2,t]$, more precisely being given by
\begin{equation}
  y'(t) = \int_{t-2}^{t-1} y(s) \, ds - \int_{t-1}^t y(s) \, ds
  \text{.} \label{form_delay_differential_equation}
\end{equation}
This is called a \emph{delay differential equation} and it can be
analysed by means of evolution semigroups as described in
\cite[Section~IV.2.8]{Engel2000} (with a different time scale) and in
\cite[Section~VI.6]{Engel2000}. Note that the latter section deals with
a more general situation; in their notation, we obtain the setting for
our example by defining $Y := \bbC$ and $B := 0$. We can reformulate
\eqref{form_delay_differential_equation} as the abstract Cauchy problem
$\dot{u}(t,\phdot) = Au(t,\phdot)$ on the space $C([-2,0])$, where the
operator $A$ is given by
\begin{equation}
  \begin{aligned}
    \label{form_generator_of_dealy_semigroup}
    D(A) &:=\Bigl\{f \in C^1([-2,0])\colon f'(0)
    = \int_{-2}^{-1}f(x)\,dx-\int_{-1}^0f(x)\,dx\Bigr\}\text{,} \\
    Af &:= f' \text{.}
  \end{aligned}
\end{equation}
For a derivation of this reformulation, we refer to the references
quoted above. There, it is also shown that the operator $A$ generates a
$C_0$-semigroup on $C([-2,0])$. Our aim here is to prove that this
semigroup is individually eventually strongly positive.

\begin{proposition}
  The operator $A\colon C([-2,0]) \supset D(A) \to C([-2,0])$ given by
  \eqref{form_generator_of_dealy_semigroup} has the following
  properties:
  \begin{enumerate}[\upshape (i)]
  \item The spectral bound $\spb(A)$ equals $0$; moreover, it is a
    dominant spectral value and a pole of the resolvent.
  \item The spectral projection $P$ associated with $\spb(A)$ is strongly
    positive.
  \item The semigroup $(e^{tA})_{t \geq 0}$ on $C([-2,0])$ is
    individually eventually strongly positive.
  \item The semigroup $(e^{tA})_{t \geq 0}$ is not positive.
  \end{enumerate}
\end{proposition}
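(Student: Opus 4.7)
I will establish (i) and (ii) by direct analysis of the characteristic equation and an explicit computation of the spectral projection, then deduce (iii) from Corollary~\ref{cor:evpos-semigroups-c_k}, and finally prove (iv) by exhibiting a specific initial value. To begin (i), I note that $D(A) \subseteq C^1([-2,0])$ is compactly embedded in $C([-2,0])$ by the Arzel\`a--Ascoli theorem, so $A$ has compact resolvent and $\sigma(A)$ consists of isolated eigenvalues, each a pole of $R(\phdot,A)$. The ansatz $f(x) = c\,e^{\lambda x}$ satisfies $Af = \lambda f$, and substituting into the boundary condition reduces after simplification to the characteristic equation
\[
\lambda^2 = -(1-e^{-\lambda})^2, \qquad \text{equivalently} \qquad \lambda = \pm\, i\bigl(1-e^{-\lambda}\bigr).
\]
Evidently $\lambda = 0$ is a solution, with eigenvector $\one$. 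Writing $\lambda = \alpha + i\beta$ in either branch, I would separate real and imaginary parts to obtain $e^{-\alpha}\sin\beta = \mp\alpha$ and $e^{-2\alpha} = (1\mp\beta)^2 + \alpha^2$; a short sign argument then rules out $\alpha > 0$ (the modulus equation forces $\beta$ to lie in a half-open interval on which $\sin\beta$ has the wrong sign to match $\mp\alpha$), while $\alpha = 0$ forces $\beta = 0$. Thus $\spb(A) = 0$ is a dominant spectral value. That $0$ is a simple pole follows from the direct computation $\ker A = \ker A^2 = \langle \one\rangle$ using the boundary condition together with Remark~\ref{rem:Laurent-exp}, completing (i).

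For (ii), since $0$ is a simple pole, $\im P = \ker A = \langle \one\rangle$, so $Pf = c(f)\,\one$ for some functional $c$ with $\ker c = \ker P = \im A$. Writing $g = f'$ with $f(x) = f(0) + \int_0^x g(s)\,ds \in D(A)$ and substituting into the boundary condition characterizes $\im A$ as the kernel of the functional
\[
\varphi(g) := g(0) - \int_{-2}^{-1}\!\int_0^x g(s)\,ds\,dx + \int_{-1}^0\!\int_0^x g(s)\,ds\,dx,
\]
and a direct evaluation gives $\varphi(\one) = 2$, so $Pf = \tfrac{1}{2}\,\varphi(f)\,\one$. Exchanging the order of integration via Fubini rewrites
\[
\varphi(f) = f(0) + \int_{-2}^{-1}(s+2)\,f(s)\,ds - \int_{-1}^0 s\,f(s)\,ds,
\]
which is the integral of $f$ against a point mass at $0$ plus a continuous non-negative weight on $[-2,0]$ that vanishes only at the endpoints $s=-2$ and $s=0$. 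Continuity of $f$ then forces $\varphi(f) > 0$ whenever $f > 0$, so $Pf \gg 0$, proving (ii).

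For (iii), the delay semigroup is eventually compact, and hence eventually norm continuous (see \cite[Section~VI.6]{Engel2000}), so combining (i), (ii) with Corollary~\ref{cor:evpos-semigroups-c_k} yields individual eventual strong positivity. For (iv), I take $u_0 \in C([-2,0])_+$ defined by $u_0 \equiv 0$ on $[-2,-1]$ and $u_0(x) = -x(x+1)$ on $[-1,0]$, so that $u_0 \geq 0$, $u_0(0) = 0$, and $u_0 > 0$ on $(-1,0)$. The mild solution $y(t) := (e^{tA}u_0)(0)$ satisfies $y(0) = 0$ and its right derivative at $t=0$ equals $\int_{-2}^{-1} u_0 - \int_{-1}^0 u_0 = -\tfrac{1}{6} < 0$, so $y(t) < 0$ for small $t > 0$ and the semigroup is not positive. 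The main obstacle in this plan is the explicit identification and Fubini rearrangement of $\varphi$ that exposes the strict positivity of the spectral projection on $E_+\setminus\{0\}$; the sign analysis underlying the dominance of $\spb(A) = 0$ is delicate but elementary.
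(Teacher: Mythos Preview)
Your argument is correct and tracks the paper's proof closely for (i)--(iii): the same characteristic equation, the same sign analysis ruling out $\repart\lambda>0$, the same strongly positive functional $\varphi(f)=f(0)+\int_{-2}^{-1}(s+2)f(s)\,ds+\int_{-1}^{0}(-s)f(s)\,ds$, and the same appeal to eventual norm continuity plus Corollary~\ref{cor:evpos-semigroups-c_k}. The only presentational difference in (ii) is that the paper simply exhibits $\varphi$ as a strongly positive eigenvector of $A'$ and invokes Proposition~\ref{prop:projections}, whereas you derive $\varphi$ by characterising $\im A=\ker P$ via the boundary condition and Fubini; both routes produce the same object. For (iv) the paper merely cites \cite[Example~B-II.1.22]{Arendt1986}, while you give a self-contained counterexample; this is a genuine improvement in that it makes the proof independent of the external reference. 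One small caution: your $u_0$ is not $C^1$ at $x=-1$ and hence not in $D(A)$, so the right derivative of $y(t)=(e^{tA}u_0)(0)$ at $t=0$ should be justified through the delay-equation structure (the right-hand side of \eqref{form_delay_differential_equation} is continuous in $t$, giving $y\in C^1((0,\infty))$ with $y'(0^+)=\Phi(u_0)=-\tfrac16$) rather than through differentiability of $t\mapsto e^{tA}u_0$ in $C([-2,0])$.
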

\begin{proof}
  For the proof we introduce the functional $\Phi\colon C([-2,0]) \to
  \bbC$ given by
  \begin{displaymath}
    \Phi(f) = \int_{-2}^{-1}f(x)\,dx-\int_{-1}^0f(x)\,dx.
  \end{displaymath}

  (i) Since the embedding $D(A) \hookrightarrow C([-2,0])$ is compact
  due to the Arzel\`{a}--Ascoli theorem, $A$ has compact resolvent
  (cf.~also \cite[p.~256]{Engel2000}). Hence, all spectral values are
  poles of the resolvent $R(\phdot,A)$ (see
  \cite[Corollary~IV.1.19]{Engel2000}).

  Let us now show that $0$ is a dominant spectral value of $A$. By
  \cite[formula~(6.11) on p.~427]{Engel2000}, the spectral values of $A$
  are exactly the complex numbers $\lambda$ which fulfil the equation
  $\lambda - \Phi(e^{\lambda \phdot}) = 0$. Using our definition of
  $\Phi$, we obtain after a short computation that
  \begin{displaymath}
    \lambda \in \sigma(A) \quad \Leftrightarrow \quad -\lambda^2 =
    (1 - e^{-\lambda})^2 \text{.}
  \end{displaymath}
  Clearly, $0$ is a solution of this equation, so $0 \in \sigma(A)$. One
  can also see directly that $\one_{[-2,0]}$ is an eigenfunction for
  $0$. To show that there are no other spectral values with non-negative
  real part, note that the above equation is satisfied if and only if one
  of the following two equations is fulfilled:
  \begin{align}
    i \lambda & = 1 - e^{-\lambda} \text{,}
    \label{form_char_equation_for_delay_difff_equ_first_case} \\
    -i \lambda & = 1 - e^{-\lambda} \text{.}
    \label{form_char_equation_for_delay_difff_equ_second_case}
  \end{align}
  Since $\lambda \in \bbC$ fulfils
  \eqref{form_char_equation_for_delay_difff_equ_first_case} if and only
  if $\overline{\lambda}$ fulfils
  \eqref{form_char_equation_for_delay_difff_equ_second_case}, it is
  sufficient to consider the first equation. Writing $\lambda$ as
  $\lambda = \alpha + i \beta$, where $\alpha, \beta \in \bbR$, we
  obtain that \eqref{form_char_equation_for_delay_difff_equ_first_case}
  is equivalent to the system
  \begin{align}
    - \beta & = 1 - e^{-\alpha}\cos \beta \text{,}
    \label{form_char_equation_for_delay_difff_equ_real_part} \\
    \alpha & = e^{-\alpha} \sin \beta \text{.}
    \label{form_char_equation_for_delay_difff_equ_im_part}
  \end{align}
  Suppose that $\alpha \geq 0$. Then
  \eqref{form_char_equation_for_delay_difff_equ_real_part} yields that
  $\beta \in [-2,0]$. Hence, we obtain from
  \eqref{form_char_equation_for_delay_difff_equ_im_part} that $0 \leq
  \alpha = e^{- \alpha} \sin \beta \leq 0$. Thus we conclude that $\sin
  \beta = 0$, so $\beta = 0$ and, finally, $\alpha = 0$. So we have
  shown that the only spectral value of $A$ with non-negative real part
  is given by $\lambda = 0$.
      
  (ii) Clearly, the eigenvalue $\spb(A) = 0$ of $A$ is geometrically
  simple and its eigenspace is spanned by $\one_{[-2,0]}$. Moreover,
  consider the functional $\varphi \in C([-2,0])'$ which is given by
  \begin{displaymath}
    \varphi(f) = f(0) + \int_{-2}^{-1} (2+x)f(x)\, dx  + 
    \int_{-1}^0 -xf(x) \, dx.
  \end{displaymath}
  The functional $\varphi$ is strongly positive and using the definition
  of the adjoint $A'$, it is easy to check that $\varphi$ is an
  eigenvector of $A'$ for the eigenvalue $0$. Hence, we conclude from
  Proposition~\ref{prop:projections} that $P \gg 0$.
  
  Alternatively, we could use the explicit formula for the resolvent of
  $A$ which is given in \cite[Proposition~VI.6.7]{Engel2000} to compute
  that $R(\phdot,A)$ is individually eventually strongly positive at
  $\spb(A) = 0$. Then it follows from
  Theorem~\ref{thm:evpos-resolvents-c_k} that $P \gg 0$.

  (iii) Since $(e^{tA})_{t \geq 0}$ is eventually norm continuous (see
  \cite[Theorem~VI.6.6]{Engel2000}), assertion (iii) follows from (i),
  (ii) and Corollary~\ref{cor:evpos-semigroups-c_k}.

  (iv) This follows from \cite[Example B-II.1.22]{Arendt1986} by
  rescaling the time scale from $[-1,0]$ to $[-2,0]$.
\end{proof}

\section{The spectral bound of eventually positive semigroups}
\label{sec:spectral_bound}

In this section we consider eventual positivity not only on
$C(K)$-spaces but also on arbitrary Banach lattices. By analogy with
Definition~\ref{def_eventual_positivity_for_sg_on_c_k}, we call a
$C_0$-semigroup $(e^{tA})_{t \ge 0}$ on a complex Banach lattice $E$
\emph{individually eventually positive} if for each $0 \le f \in E$
there is a $t_0 \ge 0$ such that $e^{tA}f \ge 0$ for all $t \ge
t_0$. Our aim is to show that such semigroups have many properties which
are already well known for positive semigroups.

We note that some of the results in this section should also hold on
more general ordered spaces than Banach lattices. For example,
Proposition~\ref{prop:laplace-transform} and its corollaries also hold
on ordered Banach spaces with normal cones. One could also try to
consider eventually positive semigroups on operator algebras, as was
done for positive semigroups in \cite[Chapter~IV]{Arendt1986}. However,
we shall not pursue this here.

Recall that if $A$ generates a $C_0$-semigroup $(e^{tA})_{t \ge 0}$,
then $\gbd(A)$ denotes the growth bound of this semigroup.  We start
with the following representation formula for the resolvent: if
$(e^{tA})_{t \geq 0}$ is a $C_0$-semigroup on a Banach space $E$, it is
well known that for $\repart \lambda > \gbd(A)$ the resolvent
$R(\lambda, A)$ can by represented as the Laplace transform of the
semigroup, that is,
\begin{equation}
  \label{eq:laplace-transform} 
  R(\lambda,A)f
  =\int_0^\infty e^{-t\lambda} e^{tA}f\,dt
\end{equation}
for all $f \in E$, where the integral is absolutely convergent. If the
spectral bound and the growth bound $\gbd(A)$ of $A$ do not coincide,
this formula may in general fail for $\spb(A) < \repart \lambda \le
\gbd(A)$; see \cite[Example~5.1.10, Theorem~5.1.9 and the end of
p.\,342]{Arendt2011}.  It is a special feature of positive semigroups
that \eqref{eq:laplace-transform} holds in the strip $\spb(A) < \repart
\lambda \leq \gbd(A)$, where the integral is to be understood as an
improper Riemann integral; see \cite[Theorem~C-III-1.2]{Arendt1986} or
\cite[Theorem~5.3.1 and Proposition~5.1.4]{Arendt2011}. We now show that
this property holds for individually eventually positive semigroups as
well.

\begin{proposition}
  \label{prop:laplace-transform}
  Let $(e^{tA})_{t \geq 0}$ be an individually eventually positive
  $C_0$-semigroup on a complex Banach lattice $E$. Then the Laplace
  transform representation \eqref{eq:laplace-transform} is valid
  whenever $\repart \lambda > \spb(A)$ and $f\in E$, where the integral
  converges as an improper Riemann integral.
\end{proposition}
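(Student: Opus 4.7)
The plan is to reduce the identity to the case of an orbit that stays in the positive cone for all times $s \geq 0$, and then to invoke a Pringsheim-type theorem for Laplace transforms of $E_+$-valued functions on a Banach lattice. The key observation is that individual eventual positivity allows a time-shift that transports the problem into the positive setting, without requiring positivity of the semigroup itself.

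By the linearity of both the integral and $R(\lambda,A)$, together with the decomposition of any $f \in E$ as a linear combination of four elements of $E_+$, the claim reduces to $f \in E_+$. For such an $f$ I would choose $t_0 \geq 0$ with $e^{tA}f \geq 0$ for all $t \geq t_0$, and set $g := e^{t_0 A}f \in E_+$. Then the orbit $s \mapsto e^{sA}g = e^{(s+t_0)A}f$ lies in $E_+$ for \emph{every} $s \geq 0$. A change of variable gives, for $T \geq t_0$,
\[
  \int_0^T e^{-\lambda t} e^{tA}f \, dt
  = \int_0^{t_0} e^{-\lambda t}e^{tA}f \, dt
  + e^{-\lambda t_0} \int_0^{T-t_0} e^{-\lambda s} e^{sA}g \, ds,
\]
while differentiating $u(t) := e^{-\lambda t}e^{tA}R(\lambda,A)f$ and using $(A-\lambda I)R(\lambda,A)=-I$ yields $u'(t) = -e^{-\lambda t}e^{tA}f$, so that integrating over $[0, t_0]$ gives the companion identity
\[
  R(\lambda, A) f
  = \int_0^{t_0} e^{-\lambda t}e^{tA}f \, dt
  + e^{-\lambda t_0} R(\lambda, A) g.
\]
Comparing these two identities, the proposition reduces to showing that
\[
  \int_0^\infty e^{-\lambda s} e^{sA} g \, ds = R(\lambda, A) g
\]
as an improper Riemann integral for every $\lambda$ with $\repart \lambda > \spb(A)$.

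For this reduced problem I would use the fact that the orbit $s \mapsto e^{sA}g$ takes values in $E_+$. When $\repart \lambda > \gbd(A)$, the Laplace integral converges absolutely and already represents $R(\lambda,A)g$, while the holomorphic map $R(\phdot, A)g$ extends this expression to the whole half-plane $\{\repart\lambda > \spb(A)\} \subseteq \rho(A)$. A Pringsheim-type theorem for vector-valued Laplace transforms of functions with values in the positive cone of a Banach lattice (in the spirit of \cite[Theorem~1.5.3 and Theorem~5.3.1]{Arendt2011}) then guarantees that the abscissa of improper Riemann convergence of the Laplace integral coincides with the abscissa of holomorphy on the real axis of its transform, hence is at most $\spb(A)$; this gives the required convergence and value.

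The main obstacle is this last Pringsheim-type step: the classical argument for positive $C_0$-semigroups exploits positivity of the whole semigroup, whereas here only the single orbit through $g$ is positive. The time-shift by $t_0$ is precisely what makes this orbit $E_+$-valued, and it is the individual (as opposed to uniform) nature of the eventual positivity assumption that makes this shift available for each $f \in E_+$ separately.
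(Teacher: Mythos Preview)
Your proof is correct and follows essentially the same route as the paper: reduce to $f \in E_+$, time-shift by $t_0$ to obtain an orbit $s \mapsto e^{sA}g$ lying entirely in $E_+$, and then invoke the Pringsheim-type theorem \cite[Theorem~1.5.3]{Arendt2011} to conclude that the abscissa of convergence of this positive orbit cannot exceed $\spb(A)$. The only organisational difference is that you derive the ``companion identity'' $R(\lambda,A)f = \int_0^{t_0} e^{-\lambda t}e^{tA}f\,dt + e^{-\lambda t_0}R(\lambda,A)g$ directly by differentiating $t \mapsto e^{-\lambda t}e^{tA}R(\lambda,A)f$ (valid for every $\lambda \in \rho(A)$), whereas the paper obtains the analogous reduction by comparing the abscissae $\abs(u)$ and $\abs(v)$ and then appealing to the identity theorem; your route is arguably cleaner since it avoids that analytic-continuation step for $\hat u$.
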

\begin{proof}
  We may assume that $f \geq 0$. Let $t_0 \geq 0$ such that $e^{tA}f \ge
  0$ for all $t \geq t_0$ and consider the functions $u,v \colon
  [0,\infty) \to E$, $u(t) = e^{tA}f$, $v(t) = e^{(t_0 + t)A}f$. By
  $\abs{u},\abs{v}$ we denote the abscissas of convergence of the
  Laplace transforms $\hat v, \hat u$, as for instance defined in
  \cite[Section~1.4]{Arendt2011}.  Clearly, $\abs(u) \le \gbd(A)$. From
  the formula
  \begin{displaymath}
    \int_0^T e^{-t\lambda} u(t)\,dt
    =\int_0^{t_0} e^{-t\lambda}u(t) \, dt
    +e^{-t_0\lambda}\int_0^{T-t_0} e^{-t\lambda}v(t) \, dt 
  \end{displaymath}
  for all $T>t_0$, we conclude that $\abs(u) = \abs(v)$. Both Laplace
  transforms $\hat u(\lambda)$ and $\hat v(\lambda)$ exist and are
  analytic on the half plane $\repart \lambda > \abs(v)$ (see
  \cite[Proposition~1.4.1 and Theorem~1.5.1]{Arendt2011}). The function
  $\hat u(\lambda)$ coincides with $R(\lambda,A)f$ for $\repart \lambda
  > \gbd(A)$ and so, due to the identity theorem for analytic functions,
  also for $\repart \lambda > \abs(v)$. Hence, we only have to show
  $\abs(v) \le \spb(A)$.

  Assume for a contradiction that $\abs(v) > \spb(A)$.  Then $\abs(v) >
  - \infty$ and $\abs(v) \not\in \sigma(A)$.  Since $v(t) \ge 0$ for all
  $t \ge 0$, \cite[Theorem~1.5.3]{Arendt2011} implies that $\hat v$ has
  a singularity at $\abs(v)$, i.e.~$\hat v$ cannot be analytically
  extended to an open neighbourhood of $\abs(v)$. As we have
  \begin{displaymath}
    R(\lambda,A)f
    =\hat u(\lambda)
    =\int_0^{t_0} e^{-t\lambda}u(t) \, dt+e^{-t_0\lambda} \hat v(\lambda)
  \end{displaymath}
  for $\repart \lambda > \abs(v)$, we conclude that $R(\phdot,A)f$ also
  has a a singularity at $\abs(v)$. This contradicts $\abs(v) \not\in
  \sigma(A)$.
\end{proof}
Note that the proof of the above proposition in fact shows that
\cite[Theorem~1.5.3]{Arendt2011} holds for eventually positive
\emph{functions}.

Proposition~\ref{prop:laplace-transform} yields the following stability
result, which is already known for positive semigroups; see
\cite[Proposition~VI.1.14]{Engel2000}.
\begin{corollary}
  \label{cor:stability-on-domain}
  Let $(e^{tA})_{t \geq 0}$ be an individually eventually positive
  $C_0$-semigroup on a complex Banach lattice $E$. Then $\spb(A)<0$ if
  and only if there exists $\varepsilon > 0$ such that $e^{\varepsilon
    t}\|e^{tA}f\| \to 0$ as $t \to \infty$ for every $f \in D(A)$.
\end{corollary}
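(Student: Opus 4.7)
The result is the eventually-positive analogue of the classical exponential-stability characterization for positive semigroups (\cite[Proposition~VI.1.14]{Engel2000}), and the plan is to deduce it from Proposition~\ref{prop:laplace-transform} in essentially the standard way, augmented by a Banach--Steinhaus argument to handle the subtler backward implication.

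For the forward direction I would pick $\varepsilon > 0$ with $\spb(A) + \varepsilon < 0$ and consider the rescaled semigroup $S(t) := e^{\varepsilon t}e^{tA}$, with generator $B := A + \varepsilon I$. Multiplication by $e^{\varepsilon t}>0$ preserves positivity, so $(S(t))_{t\ge 0}$ remains individually eventually positive; moreover $\spb(B)=\spb(A)+\varepsilon<0$, so $0 \in \rho(B)$. Proposition~\ref{prop:laplace-transform} applied to $(S(t))_{t\ge 0}$ yields the improper Riemann integral representation $\int_0^\infty S(t) g\,dt = R(0,B) g = -B^{-1}g$ for every $g \in E$. For $f \in D(A) = D(B)$ the fundamental theorem of calculus gives $S(T)f = f + \int_0^T S(t) Bf\,dt$; letting $T\to\infty$ and taking $g := Bf$, the integral converges to $-B^{-1}Bf = -f$, so $S(T)f \to 0$, which is precisely $e^{\varepsilon T}\|e^{TA}f\| \to 0$.

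For the backward direction I would aim to show $\spb(A) \leq -\varepsilon$. Since the orbit $\{e^{\varepsilon t}e^{tA}f\}_{t\ge 0}$ is bounded in $E$ for each $f \in D(A)$, the Banach--Steinhaus theorem applied to $(e^{\varepsilon t}e^{tA})_{t\ge 0}$ viewed as bounded operators from $(D(A),\|\cdot\|_A)$ to $E$ produces a constant $C$ with $\|e^{tA}f\| \leq C(\|f\|+\|Af\|)e^{-\varepsilon t}$ for all $f \in D(A)$ and all $t \ge 0$. Consequently, for any $\omega \in (-\varepsilon,0)$ the Laplace integral $L(\omega)f := \int_0^\infty e^{-\omega t}e^{tA}f\,dt$ converges absolutely on $D(A)$, and a standard integration-by-parts computation shows $(\omega I - A)L(\omega)f = L(\omega)(\omega I - A)f = f$ for all $f \in D(A)$; moreover $\omega I - A$ is injective on $D(A)$, because an eigenvector for eigenvalue $\omega$ would grow at rate $e^{\omega t}$, violating the assumed exponential decay. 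Hence $L(\omega)$ is a two-sided inverse of $(\omega I - A)|_{D(A)}$ and the range of $\omega I - A$ is dense in $E$.

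The main obstacle is to upgrade the information just obtained to $\omega \in \rho(A)$, i.e.\ to show that $\omega I - A$ is actually surjective onto $E$ with bounded inverse, given that the natural bound on $L(\omega)$ only controls the graph norm of $D(A)$ rather than the $E$-norm. I expect the cleanest resolution to exploit eventual positivity more deeply through the Pringsheim-type principle employed in the proof of Proposition~\ref{prop:laplace-transform} (namely \cite[Theorem~1.5.3]{Arendt2011}): the absolutely convergent integral $L(\lambda)f$ supplies an analytic extension of $\lambda \mapsto R(\lambda,A)f$ past $\spb(A)$ for every $f \in D(A)$, whereas if $\spb(A)$ were greater than $-\varepsilon$ the Pringsheim principle, applied to the (eventually) positive orbit $t \mapsto e^{tA}h$ for a suitable $h \ge 0$, would force $\spb(A)$ to be a genuine singularity of $R(\cdot,A)h$; a closed-graph bridging argument between these two facts then yields the desired contradiction and completes the proof.
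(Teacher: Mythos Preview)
Your forward direction is correct and is essentially a hands-on reproof of one half of \cite[Theorem~5.1.9]{Arendt2011}; the paper instead just quotes that theorem together with Proposition~\ref{prop:laplace-transform} to obtain $\spb(A)=\abs(e^{\phdot A})=\omega_1(e^{\phdot A})$ in one line.

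The backward direction, however, is where your proposal goes astray. You correctly identify the obstacle---you only control $L(\omega)$ on $(D(A),\|\cdot\|_A)$, not on $E$, so you cannot directly conclude $\omega\in\rho(A)$---but your proposed fix is misguided. The implication ``$\omega_1(e^{\phdot A})<0\Rightarrow\spb(A)<0$'' is a \emph{general} fact for $C_0$-semigroups, requiring no positivity whatsoever: it is contained in \cite[Theorem~5.1.9]{Arendt2011}, which gives $\spb(A)\le\abs(e^{\phdot A})=\omega_1(e^{\phdot A})$ for every $C_0$-semigroup. (The inequality $\spb(A)\le\abs$ is proved by observing that the Laplace integral furnishes an $\calL(E)$-valued analytic continuation of $R(\phdot,A)$ to $\{\repart\lambda>\abs\}$, and then a standard connectedness argument, using that $\|R(\lambda_n,A)\|\to\infty$ whenever $\lambda_n\to\partial\sigma(A)$, forces this half-plane into $\rho(A)$.) Eventual positivity and Pringsheim are simply not needed here; they enter only in the \emph{forward} direction, via Proposition~\ref{prop:laplace-transform}, to establish the reverse inequality $\abs(e^{\phdot A})\le\spb(A)$.

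Your Pringsheim sketch would in any case not close the gap: to get a contradiction you would need a positive $h\in D(A)$ whose orbit has abscissa exactly $\spb(A)$, but for merely eventually positive semigroups $D(A)\cap E_+$ can be trivial (see the example following Proposition~\ref{prop:d_a_plus-is-generating}), so no such $h$ is available in general. The ``closed-graph bridging argument'' you allude to is too vague to assess; if you want to rescue your route directly, the clean trick is to write, for arbitrary $g\in E$ and fixed $\mu>\gbd(A)$, $R(\lambda,A)g=(\mu-\lambda)L(\lambda)R(\mu,A)g+R(\mu,A)g$ via the resolvent identity, which extends $R(\phdot,A)g$ analytically to $\{\repart\lambda>-\varepsilon\}$ for every $g$, and then run the connectedness argument---but at that point you have reproved the relevant part of \cite[Theorem~5.1.9]{Arendt2011}, and it is simpler just to cite it, as the paper does.
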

\begin{proof}
  If we use the notation from \cite[p.~343]{Arendt2011}, then
  Proposition~\ref{prop:laplace-transform} shows that $\abs(e^{\phdot
    A}) = \spb(A)$ and hence
  \begin{displaymath}
    \spb(A)=\omega_1(e^{\phdot A})
    :=\inf\bigl\{\omega\in\mathbb R\colon\text{$\forall x\in D(A)$
      $\exists M\geq 1$ with $\|e^{tA}x\|\leq Me^{\omega t}$
      $\forall t\geq 0$}\bigr\}
  \end{displaymath}
by
  \cite[Theorem~5.1.9]{Arendt2011}. This implies the assertion.
\end{proof}

Another corollary of Proposition \ref{prop:laplace-transform} is the
following ``asymptotic positivity'' of the resolvent.

\begin{corollary}
  \label{cor:asymp_pos_reslovent}
  Let $(e^{tA})_{t \geq 0}$ be an individually eventually positive
  $C_0$-semigroup on a Banach lattice $E$ with $s(A) > -\infty$.  Then
  for every $f\geq 0$ we have
  \begin{displaymath}
    \dist\bigl((\lambda - \spb(A) )R(\lambda,A)f,E_+ \bigr) \to 0 \quad
    \text{as } \lambda \downarrow \spb(A) \text{.}
  \end{displaymath}
\end{corollary}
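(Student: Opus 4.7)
The plan is to use the Laplace transform representation from Proposition~\ref{prop:laplace-transform} together with a splitting of the integral at a time $t_0$ beyond which the orbit $e^{tA}f$ is already positive. Since the limit $\lambda \downarrow \spb(A)$ is taken along the real axis, we may assume throughout that $\lambda > \spb(A)$ is real.

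Fix $f \geq 0$. By individual eventual positivity, choose $t_0 \geq 0$ such that $e^{tA}f \in E_+$ for all $t \geq t_0$. By Proposition~\ref{prop:laplace-transform}, for every $\lambda > \spb(A)$ we have
\begin{equation*}
  R(\lambda, A)f
  = \int_0^{t_0} e^{-\lambda t} e^{tA}f \, dt
   + \int_{t_0}^\infty e^{-\lambda t} e^{tA}f \, dt,
\end{equation*}
where the second integral is to be understood as an improper Riemann integral. Call the two summands $I_1(\lambda)$ and $I_2(\lambda)$ respectively.

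The second key step is to observe that $I_2(\lambda) \in E_+$. For $\lambda$ real and each $T > t_0$, the proper Riemann integral $\int_{t_0}^T e^{-\lambda t} e^{tA}f \, dt$ is a limit of Riemann sums of positive vectors, hence lies in the closed cone $E_+$; letting $T \to \infty$ and using closedness of $E_+$ once more yields $I_2(\lambda) \in E_+$. Consequently $(\lambda - \spb(A)) I_2(\lambda) \in E_+$, so
\begin{equation*}
  \dist\bigl((\lambda - \spb(A))R(\lambda, A)f, E_+\bigr)
  \leq \bigl\|(\lambda - \spb(A))\, I_1(\lambda)\bigr\|.
\end{equation*}

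Finally, for $\lambda \in (\spb(A), \spb(A)+1]$ the integrand defining $I_1(\lambda)$ is uniformly bounded on the compact interval $[0,t_0]$ by some constant $C$ depending only on $f$ and $t_0$, so $\|I_1(\lambda)\| \leq C t_0$ and therefore $(\lambda - \spb(A))\|I_1(\lambda)\| \to 0$ as $\lambda \downarrow \spb(A)$. This yields the claim. The only nontrivial point, which I would emphasize, is the justification that the improper Riemann integral $I_2(\lambda)$ belongs to $E_+$; everything else is routine once the Laplace representation from Proposition~\ref{prop:laplace-transform} is in hand.
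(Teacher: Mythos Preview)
Your proof is correct and follows essentially the same approach as the paper: split the Laplace representation from Proposition~\ref{prop:laplace-transform} at the time $t_0$ after which the orbit is positive, note that the tail lies in $E_+$, and show that the contribution from $[0,t_0]$ vanishes after multiplication by $\lambda-\spb(A)$. The only cosmetic difference is that the paper rescales to $\spb(A)=0$ and records the slightly sharper estimate $\bigl\|\lambda\int_0^{t_0}e^{-\lambda t}e^{tA}f\,dt\bigr\|\le C(1-e^{-t_0\lambda})$ for the head term, but your cruder uniform bound works equally well.
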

\begin{proof}
  We may assume $\spb(A) = 0$. Let $f \geq 0$ and choose $t_0$ such that
  $e^{tA}f \geq 0$ whenever $t \geq t_0$. By Proposition
  \ref{prop:laplace-transform}, we obtain for $\lambda > 0$ that
  \begin{displaymath}
    \dist\bigl(\lambda R(\lambda,A)f,E_+\bigr)
    \leq\Bigl\|\lambda\int_0^{t_0}e^{-t\lambda}e^{t A} f\,dt\Bigr\|
    \leq C(1-e^{-t_0\lambda})
    \text{,}
  \end{displaymath}
  where $C = \sup_{0 \leq t \leq t_0} \|e^{tA}f\|$. The Corollary
  follows by letting $\lambda \downarrow 0$.
\end{proof}

For a positive semigroup $(e^{tA})_{t \geq 0}$ the estimate
$|R(\lambda,A)f| \leq R(\repart \lambda,A)|f|$ holds for all $f \in E$
whenever $\repart \lambda > \spb(A)$; this is an easy consequence of the
validity of formula \eqref{eq:laplace-transform} for $\repart \lambda >
\spb(A)$; see \cite[Corollary~C-III-1.3]{Arendt1986}.  The following
lemma provides us with a slightly weaker result for individually
eventually positive semigroups and for real elements $f \in E_\bbR$.
\begin{lemma}
  \label{lem_modulus_estimate_for_resolvent}
  Let $(e^{tA})_{t \geq 0}$ be an individually eventually positive
  $C_0$-semigroup on a complex Banach lattice $E$. For each $f \in
  E_\bbR$ there is a bounded map $r_f\colon (\spb(A), \infty) \to E$
  which satisfies the following properties:
  \begin{enumerate}[\upshape (i)]
  \item We have
    \begin{math}
      |R(\lambda,A)f| \leq R(\repart \lambda,A)|f| + r_f(\repart\lambda)
    \end{math}
    for all $\repart \lambda > \spb(A)$;
  \item If $\spb(A) > -\infty$, then $r_f$ is norm-bounded on
    $(\spb(A),\infty)$.
  \item If $\spb(A) = -\infty$, then $r_f$ is norm-bounded on
    $(\alpha,\infty)$ for every $\alpha \in \bbR$.
  \end{enumerate}
\end{lemma}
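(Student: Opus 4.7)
The plan is to exploit Proposition~\ref{prop:laplace-transform} to write $R(\lambda,A)f$ as an improper Riemann Laplace integral on the half-plane $\repart \lambda > \spb(A)$, split that integral at the eventual-positivity time of $f^+$ and $f^-$, and bound the tail by the analogous integral for $|f|$.

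Concretely, write $f = f^+ - f^-$ so that $|f| = f^+ + f^-\in E_+$. By individual eventual positivity applied to $f^+$ and $f^-$ we may choose $t_0 \geq 0$ such that $e^{tA}f^+ \geq 0$ and $e^{tA}f^- \geq 0$ for all $t \geq t_0$; in particular $e^{tA}|f| \geq 0$ for $t \geq t_0$ and the lattice inequality
\begin{equation*}
  |e^{tA}f| = |e^{tA}f^+ - e^{tA}f^-| \leq e^{tA}f^+ + e^{tA}f^- = e^{tA}|f|
\end{equation*}
holds for every $t \geq t_0$. Applying Proposition~\ref{prop:laplace-transform} to both $f$ and $|f|$ (both are admissible since $|f| \in E_+$ and the semigroup is individually eventually positive), splitting the integral for $R(\lambda,A)f$ at $t_0$, using the standard modulus--integral estimate in the complex Banach lattice and the above tail bound gives
\begin{equation*}
  |R(\lambda,A)f| \leq \int_0^{t_0} e^{-\repart \lambda \cdot t}|e^{tA}f|\,dt + \int_{t_0}^\infty e^{-\repart \lambda \cdot t}e^{tA}|f|\,dt.
\end{equation*}
Rewriting the tail integral as $R(\repart\lambda, A)|f|$ minus its piece over $[0,t_0]$ and regrouping, this becomes $|R(\lambda,A)f| \leq R(\repart\lambda,A)|f| + r_f(\repart\lambda)$, where
\begin{equation*}
  r_f(\mu) := \int_0^{t_0} e^{-\mu t}\bigl(|e^{tA}f| - e^{tA}|f|\bigr)\,dt, \qquad \mu \in (\spb(A),\infty),
\end{equation*}
is the natural definition of the correction term. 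This establishes (i).

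For (ii) and (iii), let $C := \sup_{t\in[0,t_0]}\|e^{tA}\|$. Since the lattice norm is a lattice norm, $\||e^{tA}f|\| = \|e^{tA}f\| \leq C\|f\|$ and $\|e^{tA}|f|\| \leq C\|f\|$, so the triangle inequality yields $\|r_f(\mu)\| \leq 2C\|f\|\int_0^{t_0}e^{-\mu t}\,dt$. If $\spb(A) > -\infty$, then for $\mu > \spb(A)$ the integral is bounded by $t_0\max(1,e^{-\spb(A)t_0})$, giving (ii); if $\spb(A) = -\infty$ and $\alpha \in \bbR$ is fixed, the same estimate with $\spb(A)$ replaced by $\alpha$ gives (iii). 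I do not expect any serious obstacle here: the proof follows the familiar template used for positive semigroups, the only novelty being that the modulus estimate on the integrand is available only for $t \geq t_0$, which is exactly what forces the explicit correction term $r_f$ to appear. The one point requiring care is the appeal to Proposition~\ref{prop:laplace-transform} for $|f|$, but this is immediate since $|f| \geq 0$ and individual eventual positivity applies.
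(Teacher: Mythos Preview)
Your proof is correct and follows essentially the same route as the paper: choose $t_0$ so that $e^{tA}f^\pm\geq 0$ for $t\geq t_0$, use the Laplace representation from Proposition~\ref{prop:laplace-transform} for both $f$ and $|f|$, and identify the correction term as $r_f(\mu)=\int_0^{t_0}e^{-\mu t}\bigl(|e^{tA}f|-e^{tA}|f|\bigr)\,dt$. The only cosmetic difference is that the paper carries a finite upper limit $T$ in the integrals and passes to the limit at the end, which makes the modulus--integral step for the improper Riemann integral slightly more transparent; your explicit bounds for (ii) and (iii) are a welcome addition where the paper merely asserts them.
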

\begin{proof}
  Let $f \in E_\bbR$ and let $t_0 \geq 0$ such that $e^{tA}f^+ \geq 0$
  and $e^{tA}f^- \geq 0$ for all $t \geq t_0$. Then $|e^{tA}f| \leq
  e^{tA}|f|$ for all $t \geq t_0$. For $\repart \lambda > \spb(A)$ and
  $T \ge t_0$ we have
  \begin{multline*}
    \Bigl|\int_0^T e^{-\lambda t}e^{tA}f\,dt\Bigr|
    \leq \int_0^T e^{-t\repart \lambda} |e^{tA} f|\,dt \\
    \leq \int_0^T e^{-\repart \lambda t}e^{tA}|f|\,dt + \int_0^{t_0}
    e^{-t\repart \lambda} \bigl( |e^{tA}f| - e^{tA}|f| \bigr)\,dt
    \text{.}
  \end{multline*}
  Letting $T \to \infty$ we conclude from
  Proposition~\ref{prop:laplace-transform} that
  \begin{displaymath}
    |R(\lambda,A)f|
    \leq R(\repart \lambda, A)|f|+ \int_0^{t_0} e^{-t\repart \lambda}
    \bigl( |e^{tA}f| - e^{tA}|f| \bigr)\,dt \text{.}
  \end{displaymath}
  Defining the last integral as $r_f(\repart \lambda)$, we obtain
  (i)--(iii).
\end{proof}
Recall that for a $C_0$-semigroup $(e^{tA})_{t \ge 0}$ on a complex
Banach space, the quantity
\begin{displaymath}
  \spb_0(A) := \inf\bigl\{\omega > \spb(A)\colon \sup_{\repart \lambda > \omega}
  \|R(\lambda,A)\| < \infty\bigr\}
\end{displaymath}
is called the \emph{abscissa of uniform boundedness of the resolvent} or
the \emph{pseudo-spectral bound} of $A$.

\begin{corollary}
  \label{cor:boundedness-of-resolvent}
  Let $(e^{tA})_{t \geq 0}$ be an individually eventually positive
  semigroups on a complex Banach lattice $E$. Then $\spb(A) =
  \spb_0(A)$.
\end{corollary}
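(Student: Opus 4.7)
The plan is to prove $\spb_0(A) \leq \spb(A)$; the reverse inequality is immediate from the definition. Fix $\omega > \spb(A)$; by Proposition~\ref{prop:laplace-transform}, the Laplace transform representation $R(\mu,A)g = \int_0^\infty e^{-\mu t} e^{tA} g\,dt$ is valid as an improper Riemann integral for every real $\mu \geq \omega$ and every $g \in E$.

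The first step is to show that $\sup_{\mu \geq \omega}\|R(\mu, A)\| < \infty$. Given $g \in E_+$, pick $t_0 \geq 0$ such that $e^{tA}g \geq 0$ for all $t \geq t_0$ and split the Laplace integral into contributions from $[0,t_0]$ and $[t_0,\infty)$; call the second piece $\beta(\mu)$. The contribution from $[0,t_0]$ is norm-bounded by $\int_0^{t_0}e^{-\omega t}\|e^{tA}\|\,dt \cdot \|g\|$, uniformly in $\mu \geq \omega$. For $\beta(\mu)$, the integrand lies in $E_+$ on the whole range $[t_0,\infty)$, so $\beta(\mu) \geq 0$, and the inequality $e^{-\mu t} \leq e^{-\omega t}$ together with the same positivity yields $\beta(\mu) \leq \beta(\omega)$ in the Banach lattice ordering. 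Monotonicity of the norm on $E_+$ then gives $\|\beta(\mu)\| \leq \|\beta(\omega)\| < \infty$. Hence $\mu \mapsto R(\mu,A)g$ is bounded on $[\omega,\infty)$ for every $g \in E_+$, and then for every $g \in E$ by decomposing $g$ into real and imaginary parts and each of those into positive and negative parts. The uniform boundedness principle therefore yields a constant $M := \sup_{\mu \geq \omega}\|R(\mu,A)\| < \infty$.

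To pass to complex arguments I would invoke Lemma~\ref{lem_modulus_estimate_for_resolvent}: for any $f \in E_\bbR$ and any $\lambda$ with $\repart\lambda \geq \omega$,
\[
\|R(\lambda,A)f\| \leq \|R(\repart\lambda,A)\|\,\|f\| + \|r_f(\repart\lambda)\| \leq M\|f\| + \sup_{\mu \geq \omega}\|r_f(\mu)\|,
\]
which is finite by parts~(ii) or~(iii) of that lemma. Splitting a general $f \in E$ into real and imaginary parts shows that $\{R(\lambda,A)f : \repart\lambda \geq \omega\}$ is norm-bounded for each $f \in E$. A second application of the uniform boundedness principle gives $\sup_{\repart\lambda \geq \omega}\|R(\lambda,A)\| < \infty$, so $\spb_0(A) \leq \omega$. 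Letting $\omega \downarrow \spb(A)$ completes the proof.

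The heart of the argument is the monotonicity estimate $0 \leq \beta(\mu) \leq \beta(\omega)$; this is the step I expect to be the main obstacle. Its validity hinges on two features of the eventually positive setting: the improper Riemann integral in the Laplace representation exists for all real $\mu > \spb(A)$ by Proposition~\ref{prop:laplace-transform}, and the orbit $(e^{tA}g)_{t \geq 0}$ is eventually positive so that the transient sign-indefinite contribution is confined to the compact interval $[0,t_0]$, where it is controlled trivially. The remainder is a standard uniform boundedness argument combined with the modulus estimate of Lemma~\ref{lem_modulus_estimate_for_resolvent}.
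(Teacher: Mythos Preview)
Your proof is correct and follows the paper's approach: the modulus estimate of Lemma~\ref{lem_modulus_estimate_for_resolvent} reduces the problem to bounding the resolvent along the real half-line $[\omega,\infty)$, and then the uniform boundedness principle finishes the argument. This is exactly what the paper's one-line proof intends.

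One remark: your first step, the monotonicity argument $0 \le \beta(\mu) \le \beta(\omega)$ for the tail of the Laplace integral, is correct but unnecessary. For \emph{any} $C_0$-semigroup one already has $\sup_{\mu \ge \omega}\|R(\mu,A)\| < \infty$: the resolvent is norm-continuous on $\rho(A)$, hence bounded on the compact interval $[\omega,\gbd(A)+1]$, while for $\mu > \gbd(A)$ the standard Hille--Yosida estimate $\|R(\mu,A)\| \le M/(\mu-\gbd(A))$ controls the rest. So the only place where eventual positivity enters is in Lemma~\ref{lem_modulus_estimate_for_resolvent}, which transfers this real-axis bound to the whole half-plane $\repart\lambda \ge \omega$. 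This is why the paper can dispense with your step~1 entirely.
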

\begin{proof}
  This readily follows from
  Lemma~\ref{lem_modulus_estimate_for_resolvent} and the uniform
  boundedness principle.
\end{proof}
Using Lemma~\ref{lem_modulus_estimate_for_resolvent}, we are able to
prove a generalisation of a well-known result for positive semigroups as
for instance given in \cite[Corollary~C-III.1.4]{Arendt1986} or
\cite[Theorem~5.3.1]{Arendt2011}
\begin{theorem}
  \label{thm:spb-in-spectrum}
  Let $(e^{tA})_{t \geq 0}$ be an individually eventually positive
  $C_0$-semigroup on a complex Banach lattice $E$. If $\sigma(A) \not=
  \emptyset$, then $\spb(A) \in \sigma(A)$.
\end{theorem}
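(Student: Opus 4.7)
The plan is to argue by contradiction: assume that $\spb(A) \in \rho(A)$ (note $\spb(A) \in \bbR$ since $\sigma(A) \neq \emptyset$), and show that then an entire open half-plane strictly to the left of $\spb(A)$ also lies in $\rho(A)$, contradicting the definition of the spectral bound. The argument combines Lemma~\ref{lem_modulus_estimate_for_resolvent} with Corollary~\ref{cor:boundedness-of-resolvent}.

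The key step is to prove that $\|R(\phdot, A)\|$ is uniformly bounded on the open half-plane $\{\repart \lambda > \spb(A)\}$. Fix $f \in E_\bbR$. The real-variable map $\mu \mapsto R(\mu, A)|f|$ is norm-bounded on $(\spb(A), \infty)$: on some right neighbourhood $(\spb(A), \spb(A) + \delta]$ by continuity of $R(\phdot, A)$ at $\spb(A) \in \rho(A)$, and on $[\spb(A) + \delta, \infty)$ by Corollary~\ref{cor:boundedness-of-resolvent}, which yields $\spb_0(A) = \spb(A)$. From Lemma~\ref{lem_modulus_estimate_for_resolvent}(i)--(ii),
\[
  |R(\lambda, A) f| \le R(\repart \lambda, A) |f| + r_f(\repart \lambda) =: v
\]
for $\repart \lambda > \spb(A)$, with $\|r_f\|$ bounded on $(\spb(A), \infty)$. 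Since $|R(\lambda, A) f| \ge 0$, the lattice inequalities $|R(\lambda, A) f| \le v^+ \le |v|$ and the triangle inequality give
\[
  \|R(\lambda, A) f\| \le \|v\| \le \|R(\repart \lambda, A) |f|\| + \|r_f(\repart \lambda)\|,
\]
which is uniformly bounded in $\lambda$. Decomposing an arbitrary $f \in E$ as $f = \repart f + i \impart f$ extends the pointwise boundedness to all of $E$, and the uniform boundedness principle produces $M > 0$ with $\|R(\lambda, A)\| \le M$ on $\{\repart \lambda > \spb(A)\}$.

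Finally, the standard Neumann series expansion around any $\mu$ with $\repart \mu > \spb(A)$ shows $\{|\lambda - \mu| < 1/M\} \subseteq \rho(A)$; sliding $\mu$ along vertical lines yields $\{\repart \lambda > \spb(A) - 1/M\} \subseteq \rho(A)$. This forces $\sup\{\repart \mu : \mu \in \sigma(A)\} \le \spb(A) - 1/M$, contradicting the definition of $\spb(A)$ as this supremum (attained by spectral values of real part arbitrarily close to $\spb(A)$ since $\sigma(A) \neq \emptyset$).

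The step I expect to require the most care is the conversion of the order inequality in Lemma~\ref{lem_modulus_estimate_for_resolvent} into a norm bound: neither $r_f(\repart \lambda)$ nor $R(\repart \lambda, A)|f|$ is a priori positive (individual eventual positivity of the semigroup does not guarantee positivity of the resolvent at real points), so one cannot simply drop absolute values. Passing through $|R(\lambda, A) f| \le v^+ \le |v|$ and only then applying the triangle inequality to $\|v\|$ circumvents this issue cleanly.
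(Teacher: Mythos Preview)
Your proof is correct and rests on the same key ingredient as the paper's, namely Lemma~\ref{lem_modulus_estimate_for_resolvent}, which transfers control of $R(\lambda,A)$ to control of $R(\repart\lambda,A)$ along the real axis. The packaging differs slightly: the paper argues directly rather than by contradiction. It picks a sequence $(\lambda_n)$ with $\repart\lambda_n > \spb(A)$ and $\dist(\lambda_n,\sigma(A)) \to 0$, so that $\|R(\lambda_n,A)\| \to \infty$; by the uniform boundedness principle there is a real $f$ along a subsequence with $\|R(\lambda_{n_k},A)f\| \to \infty$, and then Lemma~\ref{lem_modulus_estimate_for_resolvent} forces $\|R(\repart\lambda_{n_k},A)|f|\| \to \infty$, whence $\spb(A) \in \sigma(A)$ since $\repart\lambda_{n_k} \to \spb(A)$. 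Your route via Corollary~\ref{cor:boundedness-of-resolvent} and a Neumann-series extension is a little more circuitous but perfectly sound; the care you take in passing from the order inequality to a norm inequality via $|R(\lambda,A)f| \le v \le v^+ \le |v|$ is exactly right, and the paper's proof implicitly relies on the same step.
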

\begin{proof}
  Let $\sigma(A)\neq\emptyset$ and choose a sequence $(\lambda_n)$ with
  $\repart \lambda_n > \spb(A)$ such that $\dist(\lambda_n,\sigma(A))
  \to 0$.  Then $\repart \lambda_n \to \spb(A)$ and $\|R(\lambda_n,A)\|
  \to \infty$. By the uniform boundedness principle, there is an $f \in
  E$ and a subsequence $(\lambda_{n_k})$ of $(\lambda_n)$ such that
  $\|R(\lambda_{n_k},A)f\| \to \infty$. We may in fact choose $f$ to be
  real. Thus, Lemma~\ref{lem_modulus_estimate_for_resolvent} implies
  that $\|R(\repart \lambda_{n_k},A)f\| \to \infty$. As $\repart
  \lambda_{n_k} \to \spb(A)$, we conclude that $\spb(A) \in \sigma(A)$.
\end{proof}
If we know that the spectral bound $\spb(A)$ is a pole of the resolvent,
then we can draw a conclusion on the order of any other pole in the
peripheral spectrum $\sigma_{\per}(A)$, similar to the case of positive
semigroups; see \cite[Corollary C-III-1.5]{Arendt1986}.

\begin{theorem}
  \label{thm:boundary-spectrum}
  Let $(e^{tA})_{t \geq 0}$ be an individually eventually positive
  $C_0$-semigroup on a complex Banach lattice $E$. Suppose that $\spb(A)
  > -\infty$ is a pole of $R(\phdot,A)$ of order $m \in \bbN$. Then we
  have the following assertions.
  \begin{enumerate}[\upshape (i)]
  \item The number $\spb(A)$ is an eigenvalue of $A$ admitting a
    positive eigenvector.
  \item Every pole of $R(\phdot,A)$ in $\sigma_{\per}(A)$ has order at
    most~$m$.
  \end{enumerate}
\end{theorem}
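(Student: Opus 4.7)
The plan splits along the two assertions, using Proposition~\ref{prop:laplace-transform} for (i) and Lemma~\ref{lem_modulus_estimate_for_resolvent} for (ii). In both cases the strategy is to extract the leading Laurent coefficient of $R(\phdot,A)$ at the relevant pole by multiplying by the appropriate power of $\lambda - \lambda_0$ and letting $\lambda$ approach $\lambda_0$ radially from the right, then to exploit positivity of the integrand (for (i)) or the modulus estimate (for (ii)).

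For (i), let $U^{m-1}$ denote the leading Laurent coefficient of $R(\phdot,A)$ at $\spb(A)$; it is non-zero and its range is contained in $\ker(\spb(A) I - A)$ by Remark~\ref{rem:Laurent-exp}, and it is given by the radial limit $U^{m-1} = \lim_{\lambda \downarrow \spb(A)}(\lambda-\spb(A))^m R(\lambda,A)$. Fix $f \ge 0$ and choose $t_0$ with $e^{tA}f \ge 0$ for all $t \ge t_0$. Proposition~\ref{prop:laplace-transform} legitimises the decomposition
\begin{displaymath}
  R(\lambda,A)f \;=\; \int_0^{t_0} e^{-t\lambda}e^{tA}f\,dt \;+\; \int_{t_0}^\infty e^{-t\lambda}e^{tA}f\,dt
\end{displaymath}
for every real $\lambda > \spb(A)$. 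The first integral stays norm-bounded as $\lambda \downarrow \spb(A)$, so after multiplication by $(\lambda-\spb(A))^m$ with $m \ge 1$ it tends to $0$; the second integral lies in the closed cone $E_+$ for each such $\lambda$. Passing to the limit gives $U^{m-1}f \in E_+$, so $U^{m-1}$ is a positive operator, and hence real. Since $U^{m-1} \neq 0$, there is a real $g \in E_\bbR$ with $U^{m-1}g \neq 0$; writing $g = g^+ - g^-$, at least one of the positive vectors $U^{m-1}g^\pm \in E_+$ is non-zero, and any such vector is a positive eigenvector of $A$ for $\spb(A)$.

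For (ii), let $\spb(A) + i\beta \in \sigma_{\per}(A)$ be a pole of order $k$ with leading Laurent coefficient $V^{k-1} \neq 0$, computed as $V^{k-1} = \lim_{\varepsilon \downarrow 0} \varepsilon^k R(\spb(A) + \varepsilon + i\beta, A)$. Suppose for contradiction that $k > m$. For each $f \in E_\bbR$, Lemma~\ref{lem_modulus_estimate_for_resolvent} applied at $\lambda = \spb(A) + \varepsilon + i\beta$ yields
\begin{displaymath}
  \varepsilon^k\,\bigl|R(\spb(A) + \varepsilon + i\beta,A)f\bigr|
  \;\le\; \varepsilon^{\,k-m}\bigl(\varepsilon^{m} R(\spb(A)+\varepsilon,A)|f|\bigr)
  \;+\; \varepsilon^k r_f(\spb(A)+\varepsilon).
\end{displaymath}
As $\varepsilon \downarrow 0$, the bracketed factor on the right converges in norm to $U^{m-1}|f|$ while $\varepsilon^{k-m} \to 0$, and the second term vanishes thanks to the norm-boundedness of $r_f$ from Lemma~\ref{lem_modulus_estimate_for_resolvent}(ii) together with $k \ge 1$. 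The left-hand side converges in norm to $|V^{k-1}f|$ by monotonicity of the lattice norm. Hence $V^{k-1}f = 0$ for every $f \in E_\bbR$, and by complex linearity for every $f \in E$, contradicting $V^{k-1} \neq 0$.

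The main delicacy is ensuring that the radial limit in (ii) genuinely recovers $V^{k-1}$ and that the decomposition of the Laplace integral in (i) makes sense for $\repart\lambda > \spb(A)$ rather than only $\repart\lambda > \gbd(A)$; both are guaranteed by Proposition~\ref{prop:laplace-transform} and the analyticity of the resolvent at isolated poles, so no estimate beyond the machinery already established in this section is needed.
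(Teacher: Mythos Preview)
Your proof is correct and follows essentially the same route as the paper: for (i) you establish positivity of the leading Laurent coefficient $U^{m-1}$ via the Laplace representation (the paper packages this step as Corollary~\ref{cor:asymp_pos_reslovent}, whose proof is exactly your splitting of the integral at $t_0$), and for (ii) you use the modulus estimate from Lemma~\ref{lem_modulus_estimate_for_resolvent} along radial approach to the pole, precisely as the paper does. The only cosmetic difference is that you spell out the factorisation $\varepsilon^k = \varepsilon^{k-m}\cdot\varepsilon^m$ and the continuity of the modulus map explicitly, while the paper leaves these implicit.
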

\begin{proof}
  As usual, without loss of generality we assume that $\spb(A)=0$.  To
  prove (i) we use the Laurent expansion \eqref{eq:Laurent-exp} of
  $R(\phdot,A)$ about $\spb(A)=0$. As $0$ is a pole of order $m$ it
  follows that $\lambda^mR(\lambda,A)\to U^{m-1}$ in $\calL(E)$. Also
  recall from Remark~\ref{rem:Laurent-exp} that $\im(U^{m-1}) \not=
  \{0\}$ consists of eigenvectors of $A$ to the eigenvalue $0$. It
  follows from Corollary~\ref{cor:asymp_pos_reslovent} that $U^{m-1}$ is
  positive, and hence $A$ has a positive eigenvector.

  To prove (ii) assume that $\lambda_0\in i\bbR$ is a pole of
  $R(\phdot,A)$. Applying Lemma~\ref{lem_modulus_estimate_for_resolvent}
  we see that for $\lambda=\lambda_0+\alpha$ with $\alpha>0$ we have
  \begin{equation}
    \label{eq:pole-test}
    \bigl|(\lambda - \lambda_0)^kR(\lambda,A)f\bigr|
    \leq \alpha^k R(\alpha,A)|f|+\alpha^k r_f(\alpha)
  \end{equation}
  for all $k\in\mathbb N$ and all $f \in E_\bbR$.  If $\lambda_0$ is a
  pole of order $k_0$, then $\lim_{\lambda \to \lambda_0}(\lambda -
  \lambda_0)^{k_0}R(\lambda,A)$ exists in $\mathcal L(E)$ and the limit
  is non-zero. However, the right hand side of \eqref{eq:pole-test}
  converges to $0$ as $\alpha \downarrow 0$ if $k>m$. Hence, $k_0\leq
  m$.
\end{proof}
It is another remarkable property of positive semigroups that on many
important spaces their spectral bound and their growth bound always
coincide (see \cite[Theorem~C-IV-1.1(a)]{Arendt1986}).  In the next
theorem, we show that this remains true for individually eventually
positive semigroups, for essentially the same reasons.
\begin{theorem}
  \label{thm:spb-growth-bd}
  Let $(e^{tA})_{t \geq 0}$ be an individually eventually positive
  $C_0$-semigroup on a complex Banach lattice $E$. Then $\spb(A) =
  \gbd(A)$ in any of the following cases:
  \begin{enumerate}[\upshape (i)]
  \item $E$ is a Hilbert space.
  \item $E = L^1(\Omega, \Sigma, \mu)$ for an arbitrary measure space
    $(\Omega, \Sigma, \mu)$ with $\mu \geq 0$.
  \item $E = C(K)$ for a compact Hausdorff space $K$ and $A$ is real.
  \end{enumerate}
\end{theorem}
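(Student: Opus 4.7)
The inequality $\spb(A) \leq \gbd(A)$ is automatic, so in each case I need only establish the reverse inequality.

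For (i), I appeal to the Gearhart--Pr\"uss theorem, which on Hilbert spaces gives $\gbd(A) = \spb_0(A)$, and to Corollary~\ref{cor:boundedness-of-resolvent}, which gives $\spb_0(A) = \spb(A)$ under individual eventual positivity; combining these two identities yields the claim at once.

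For (ii), I fix $\omega > \spb(A)$ and aim to show $\int_0^\infty e^{-\omega t}\|e^{tA}f\|_1\,dt < \infty$ for every $f \in L^1$; applying the Datko--Pazy theorem to the rescaled semigroup $(e^{t(A-\omega I)})_{t \geq 0}$ then forces $\gbd(A) < \omega$, and letting $\omega \downarrow \spb(A)$ concludes. For $f \geq 0$ I choose $t_0$ with $e^{tA}f \geq 0$ for $t \geq t_0$; by Proposition~\ref{prop:laplace-transform} the improper Riemann integral $g := \int_{t_0}^\infty e^{-\omega t}e^{tA}f\,dt$ converges in $L^1$ to a non-negative function. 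Since the integrand is non-negative (Tonelli applies on the $\sigma$-finite set carrying its support),
\begin{displaymath}
    \int_{t_0}^\infty e^{-\omega t}\|e^{tA}f\|_1\,dt = \int_\Omega g\,d\mu = \|g\|_1 < \infty.
\end{displaymath}
Together with the trivial bound on $[0,t_0]$ and a four-piece decomposition of arbitrary $f \in L^1$, this yields the claim.

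For (iii), the reality of $A$ implies that $D(A) \cap E_\bbR$ is dense in $E_\bbR$ and that the operator norm of $e^{tA}$ on $E = C(K)$ coincides with its norm on $E_\bbR$. I fix $\omega > \spb(A)$ and, by density, choose $u \in D(A) \cap E_\bbR$ with $u \geq \one$ (e.g., $u = 2v$ for $v \in D(A) \cap E_\bbR$ within $1/2$ of $\one$). Since the rescaled semigroup $(e^{t(A-\omega I)})_{t \geq 0}$ is individually eventually positive and has negative spectral bound, Corollary~\ref{cor:stability-on-domain} yields $\sup_{t \geq 0}e^{-\omega t}\|e^{tA}u\|_\infty < \infty$. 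Applying individual eventual positivity to $u - \one$ and to $\one$ gives $0 \leq e^{tA}\one \leq e^{tA}u$ pointwise for $t$ large enough (depending on $u$), so $\sup_{t \geq 0}e^{-\omega t}\|e^{tA}\one\|_\infty < \infty$. For any real $f$ with $\|f\|_\infty \leq 1$, individual eventual positivity of $f^+$, $f^-$ and $\one - |f|$ delivers $|e^{tA}f| \leq e^{tA}|f| \leq e^{tA}\one$ pointwise for $t$ sufficiently large (with threshold depending on $f$), hence $\sup_{t \geq 0}e^{-\omega t}\|e^{tA}f\|_\infty < \infty$. Complex $f$ is handled via its real and imaginary parts using the reality of $A$. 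The uniform boundedness principle then yields $\sup_{t \geq 0}e^{-\omega t}\|e^{tA}\| < \infty$, so $\gbd(A) \leq \omega$; letting $\omega \downarrow \spb(A)$ finishes the proof.

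The main obstacle is part (iii), where several $f$-dependent eventual positivity thresholds (for $f^\pm$, $\one - |f|$, $u - \one$ and $\one$ itself) must be juggled simultaneously to obtain pointwise control on $|e^{tA}f|$ for each fixed $f$; the crucial observation is that although each threshold depends on the vector, the resulting individual bounds $\sup_t e^{-\omega t}\|e^{tA}f\|_\infty < \infty$ are enough for the uniform boundedness principle to upgrade them to a uniform operator-norm estimate.
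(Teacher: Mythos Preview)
Your proof is correct and follows essentially the same strategy as the paper in all three parts. The only notable variation is in (iii): the paper bounds the orbit of a strongly positive $u \in D(A)$ directly via the identity $e^{tA}u - u = \int_0^t e^{sA}Au\,ds$ together with the operator-norm bound $\sup_{t\ge 0}\bigl\|\int_0^t e^{sA}\,ds\bigr\| < \infty$ (deduced from the Laplace-transform representation and \cite[Proposition~1.4.5(a)]{Arendt2011}), and then sandwiches $0 \le e^{tA}f \le \beta\, e^{tA}u$ in one step, whereas you reach the boundedness of $e^{-\omega t}e^{tA}u$ by invoking Corollary~\ref{cor:stability-on-domain} and insert $\one$ as an intermediate comparison; both routes rest on Proposition~\ref{prop:laplace-transform} and are equivalent in strength. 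In (ii) the paper phrases your Tonelli step as additivity of the $L^1$-norm on the positive cone applied to Riemann sums, which avoids joint-measurability issues on non-$\sigma$-finite measure spaces but is otherwise the same argument.
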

\begin{proof}
  Note that it is sufficient to prove in each case that every
  individually eventually positive (and, in case (iii), real) semigroup
  on $E$ satisfies the implication
  \begin{equation}
    \label{form_implication_for_equality_of_spec_bnd_of_growth_bnd}
    \spb(A) < 0 \quad \Rightarrow \quad (e^{tA})_{t\geq 0} \text{ is
      bounded.}
  \end{equation}
  Indeed,
  \eqref{form_implication_for_equality_of_spec_bnd_of_growth_bnd} yields
  for every individually eventually positive (and, in case (iii), real)
  semigroup $(e^{tA})_{t \geq 0}$ that the rescaled semigroup
  $(e^{t(A-\alpha)})_{t \geq 0}$ is bounded whenever $\alpha > \spb(A)$;
  this in turn implies $\gbd(A) \leq \alpha$ and hence $\gbd(A) \le
  \spb(A)$.

  (i) Suppose that $\spb(A) < 0$. Then we have $s_0(A) < 0$ according to
  Corollary~\ref{cor:boundedness-of-resolvent}.  The Gearhart--Pr\"uss
  theorem, see \cite[Theorem~V.1.11]{Engel2000} or
  \cite[Theorem~5.2.1]{Arendt2011}, now implies that $e^{tA}$ converges
  to $0$ with respect to the operator norm as $t \to \infty$; in
  particular, $(e^{tA})_{t \ge 0}$ is bounded.

  (ii) Let $E = L^1(\Omega, \Sigma, \mu)$ and suppose that $\spb(A) <
  0$. Given $f \in E_+$, choose $t_0 \geq 0$ such that $e^{tA}f \geq 0$
  for all $t \geq t_0$. Proposition~\ref{prop:laplace-transform} yields
  that $R(0,A)f=\int_0^\infty e^{tA} f \, dt$ exists as an improper
  Riemann integral. Hence, due to the additivity of $\|\phdot\|_1$ on
  the positive cone $E_+$, we obtain
  \begin{displaymath}
    \int_0^\infty\|e^{tA}f\|_1\,dt
    \leq\Bigl\|\int_0^\infty e^{tA} f\,dt\Bigr\|_1
    + 2\int_0^{t_0} \|e^{tA}f\|_1 \,dt.
  \end{displaymath}
  Therefore, $\int_0^\infty \|e^{tA}f\|_1\, dt < \infty$ for every $f
  \in E_+$ and thus for every $f \in E$. By a theorem due to Datko and
  Pazy (see \cite[Theorem~5.1.2]{Arendt2011} or
  \cite[Theorem~V.1.8]{Engel2000}) we conclude that $(e^{tA})_{t\geq 0}$
  converges to $0$ with respect to the operator norm and is in
  particular bounded.

  (iii) Let $E=C(K)$ and suppose $A$ is real and $\spb(A) < 0$. Given
  $f \in E$, Proposition~\ref{prop:laplace-transform} implies that the
  Laplace transform of the trajectory
  \begin{displaymath}
    [0,\infty) \to E \text{,} \quad t \mapsto e^{tA}f
  \end{displaymath}
  has an abscissa of convergence which is no larger than $\spb(A)$ (see
  \cite[Start of Section~1.4]{Arendt2011} for a definition of the
  abscissa of convergence). Hence,
  \cite[Proposition~1.4.5(a)]{Arendt2011} implies that $\lim_{t \to
    \infty} \int_0^t e^{-\lambda s}e^{sA}\, ds$ exists in the operator
  norm whenever $\repart \lambda > \spb(A)$. Setting $\lambda = 0$ we
  obtain in particular that
  \begin{displaymath}
    M:=\sup_{t\geq 0}\Bigl\|\int_0^t e^{sA} \, ds\Bigr\|_\infty
    < \infty \text{.}
  \end{displaymath}
  Next, we choose an element $u \in D(A)$ with $u\gg 0$.  Such an
  element $u$ exists since $D(A)$ is dense in $C(K)$ and $A$ is real.
  Hence, for all $t>0$
  \begin{displaymath}
    \|e^{tA}u - u\|_\infty 
    = \Bigl\|\int_0^t e^{sA} Au \, ds \Bigr\|_\infty
    \leq M\|Au\|_\infty \text{,}
  \end{displaymath}
  so the trajectory $(e^{tA}u)_{t \geq 0}$ is bounded. Now let $f \in
  E_+$ with $\|f\|_\infty=1$. As $u\gg 0$ there exists $\beta>0$ such
  that $0\leq f\leq\one\leq\beta u$. Hence, by the eventual positivity
  of the semigroup there exists $t_0>0$ such that $0\leq e^{tA}f \leq
  \beta e^{tA} u$ for all $t\geq t_0$. This shows that the trajectory
  $(e^{tA}f)_{t \geq 0}$ is bounded for all $f \in E_+$ and hence for
  all $f \in E$. The uniform boundedness principle finally implies that
  $(e^{tA})_{t \geq 0}$ is bounded.
\end{proof}
For positive semigroups, the assertion of
Theorem~\ref{thm:spb-growth-bd} is also known to hold on $L^p(\Omega,
\Sigma, \mu)$ for $\sigma$-finite measure spaces $(\Omega, \Sigma, \mu)$
(see \cite[Theorem~5.3.6]{Arendt2011} or
\cite[Theorem~3.5.3]{Neerven1996}) and on $C_0(L)$-spaces for locally
compact Hausdorff spaces $L$ (see \cite[Theorem~B-IV-1.4]{Arendt1986}).
It would be interesting to know whether those results remain true for
individually or at least for uniformly eventually positive semigroups.

On $C(K)$ spaces, Theorem~\ref{thm:spb-growth-bd} yields the following
result on the non-emptiness of the spectrum of the generator.

\begin{corollary}
  Let $(e^{tA})_{t \geq 0}$ be a real and uniformly eventually strongly
  positive $C_0$-semigroup on $C(K)$ for some compact Hausdorff space
  $K$.  Then $\sigma(A)\neq\emptyset$.
\end{corollary}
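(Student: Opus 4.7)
The plan is to argue by contradiction: assume $\sigma(A)=\emptyset$ and derive a contradiction via a positive lower bound for the spectral radius of a single semigroup operator.

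First, I would reduce the question of non-emptiness of $\sigma(A)$ to a statement about the growth bound. Suppose $\sigma(A)=\emptyset$. Then $\spb(A)=-\infty$. Since uniform eventual strong positivity implies, in particular, individual eventual positivity, and $A$ is real, Theorem~\ref{thm:spb-growth-bd}(iii) applies and yields $\gbd(A)=\spb(A)=-\infty$. Combined with the standard identity $\spr(e^{tA})=e^{t\gbd(A)}$ valid for any $C_0$-semigroup and every $t>0$, this forces $\spr(e^{tA})=0$ for every $t>0$.

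Next, I would contradict this by showing that $\spr(e^{t_0 A})>0$, where $t_0\ge 0$ is chosen so that $e^{tA}\gg 0$ for every $t\ge t_0$. Set $T:=e^{t_0A}$. Since $\one > 0$, strong positivity of $T$ gives $T\one\gg 0$, i.e.\ $T\one\ge\beta\one$ for some $\beta>0$. As $T$ is positive, an easy induction yields $T^n\one\ge \beta^n\one$ for every $n\in\bbN$. Consequently
\begin{equation*}
  \|T^n\|\ge \|T^n\one\|_\infty \ge \beta^n \|\one\|_\infty = \beta^n,
\end{equation*}
so by Gelfand's formula $\spr(T)=\lim_{n\to\infty}\|T^n\|^{1/n}\ge\beta>0$, contradicting the conclusion of the previous paragraph.

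There is essentially no hard step here: the whole argument rests on two previously established tools, namely Theorem~\ref{thm:spb-growth-bd}(iii) (which does the real work of tying the spectrum to the growth bound on $C(K)$) and the observation that a strongly positive operator on $C(K)$ maps $\one$ into the interior of the positive cone. The only minor subtlety is to notice that uniform eventual strong positivity of the semigroup delivers a \emph{single} strongly positive operator $T=e^{t_0 A}$ all of whose powers remain positive, which is exactly what is needed to transfer a lower bound from $T\one$ to $T^n\one$ and hence to $\spr(T)$.
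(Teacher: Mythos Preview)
Your proof is correct and follows essentially the same route as the paper's: pick $t_0$ with $e^{t_0A}\gg 0$, use $e^{t_0A}\one\ge\varepsilon\one$ and positivity to iterate, obtain $\spr(e^{t_0A})\ge\varepsilon>0$, and then invoke $\spr(e^{t_0A})=e^{t_0\gbd(A)}$ together with Theorem~\ref{thm:spb-growth-bd}(iii). The only cosmetic difference is that the paper argues directly that $\gbd(A)>-\infty$ and hence $\spb(A)>-\infty$, whereas you phrase it as a contradiction; also, make sure to take $t_0>0$ (not $t_0=0$) so that the identity $\spr(e^{t_0A})=e^{t_0\gbd(A)}$ is applicable.
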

\begin{proof}
  For sufficiently large $t_0 > 0$, the operator $e^{t_0A}$ is strongly
  positive. Thus, we have $e^{t_0A}\one \geq \varepsilon \one$ for some
  $\varepsilon > 0$. Iterating this inequality, we obtain that
  $(e^{t_0A})^n \one \geq \varepsilon^n \one$ for all $n \in \mathbb{N}$. 
  Hence, $\|(e^{t_0A})^n\|\geq \varepsilon^n$ and therefore 
  $\spr(e^{t_0A})\geq\varepsilon$. As $\spr(e^{t_0A}) = e^{t_0 \gbd(A)}$ (see
  \cite[Proposition~IV.2.2]{Engel2000}), we conclude that $\gbd(A)>
  -\infty$. Since the semigroup is real, so is $A$ and
  Theorem~\ref{thm:spb-growth-bd} thus implies $\gbd(A) =
  \spb(A)$. Hence, $\spb(A) > -\infty$.
\end{proof}

\begin{remark}
  For the generator $A$ of a positive semigroup on $C(K)$, the fact that
  $\sigma(A) \neq \emptyset$ is true without any irreducibility or
  strong positivity assumptions, see
  \cite[Theorem~B-III-1.1]{Arendt1986}. It does not seem clear whether
  $\sigma(A)\neq\emptyset$ for a uniformly or individually eventually
  positive semigroup on $C(K)$ in general.
\end{remark}

\section{Final remarks on eventually positive resolvents}
\label{sec:resolvents_final_remarks}

After discussing the spectral bound of individually eventually positive
semigroups, let us finish with a few notes on individually eventually
positive resolvents on arbitrary Banach lattices. If $E$ is a complex
Banach lattice, $A$ is a closed operator on $E$ and $\lambda_0$ is
either $-\infty$ or a spectral value of $A$ in $\bbR$ then, in complete
analogy to Section~\ref{sec:resolvents}, the resolvent of $A$ is called
\emph{individually eventually positive at $\lambda_0$} if there is a
number $\lambda_2 > \lambda_0$ with the following properties:
$(\lambda_0, \lambda_2] \subseteq \rho(A)$ and for each $f \in E_+$
there exists $\lambda_1 \in (\lambda_0, \lambda_2]$ such that
$R(\lambda,A)f \ge 0$ for all $\lambda \in (\lambda_0, \lambda_1]$. Let
us first make the following simple observation.

\begin{proposition}
  \label{prop:d_a_plus-is-generating}
  Let $A$ be a closed real operator on a complex Banach lattice $E$, let
  $\lambda_0$ be either $-\infty$ or a spectral value of $A$ in $\bbR$
  and suppose that the resolvent of $A$ is individually eventually
  positive at $\lambda_0$. Then the cone $D(A)_+ := D(A) \cap E_+$ is
  generating in $D(A)$, that is, $D(A) = D(A)_+ - D(A)_+$.
\end{proposition}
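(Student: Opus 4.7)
The strategy is to produce the decomposition by applying the resolvent of $A$ at a carefully chosen point close to $\lambda_0$, where individual eventual positivity takes effect. Since $A$ is real, any $x \in D(A)$ splits as $x = x_1 + i x_2$ with $x_1, x_2 \in D(A) \cap E_\bbR$, so it suffices to decompose a real $x \in D(A) \cap E_\bbR$ as a difference of two elements of $D(A)_+$.

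To do so, I fix a reference point $\mu_0 \in (\lambda_0, \lambda_2]$ (where $\lambda_2$ is the parameter from the analogue of Definition~\ref{def:evpos-resolvent-c_k} on Banach lattices) and set $y := \mu_0 x - Ax \in E_\bbR$. Decomposing $x = x^+ - x^-$ and $y = y^+ - y^-$ via the lattice operations, I apply individual eventual positivity at $\lambda_0$ up to four times---once to each non-zero element among $x^+, x^-, y^+, y^- \in E_+$---obtaining thresholds $\lambda_1^{(j)} \in (\lambda_0, \lambda_2]$. I then choose any $\mu$ in the non-empty interval $(\lambda_0, \min_j \lambda_1^{(j)}) \cap (\lambda_0, \mu_0)$ and set $c := \mu_0 - \mu > 0$, so that $R(\mu, A)$ maps each of the four elements into $E_+$ while $\mu$ lies strictly below $\mu_0$.

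Since $\mu \in \rho(A)$ and $\mu x - Ax = y - c x$, the resolvent identity gives
\[
x \;=\; R(\mu, A)(\mu x - Ax) \;=\; R(\mu, A) y \,-\, c\, R(\mu, A) x.
\]
Substituting the lattice decompositions of $y$ and $x$ and regrouping, I arrive at
\[
x \;=\; \bigl[R(\mu, A) y^+ + c\, R(\mu, A) x^-\bigr] \,-\, \bigl[R(\mu, A) y^- + c\, R(\mu, A) x^+\bigr],
\]
and both bracketed expressions are sums of positive elements of $E$ lying in $D(A)$ (the resolvent maps $E$ into $D(A)$), so they belong to $D(A)_+$ and yield the desired decomposition.

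The main obstacle is a circularity in the naive approach: one is tempted to write $x = R(\mu, A)(\mu x - Ax)^+ - R(\mu, A)(\mu x - Ax)^-$ and appeal directly to eventual positivity for the positive parts of $\mu x - Ax$, but those parts depend on the very $\mu$ at which positivity must hold. The two-scale separation above---fix $\mu_0$ to \emph{define} $y$, then \emph{operate} at a strictly smaller $\mu$ and absorb the mismatch $-c\, R(\mu, A) x$ by a further application of eventual positivity to $x^\pm$---is what breaks this circular dependence and closes the argument.
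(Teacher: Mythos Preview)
Your proof is correct and follows the same core idea as the paper's: break the circularity by decomposing into finitely many positive elements of $E$ that do not depend on the resolvent parameter, then pick the parameter close enough to $\lambda_0$ for individual eventual positivity to act on each. The paper's version is marginally simpler---it decomposes $f$ and $Af$ separately, writing $(\lambda-A)f = \lambda f^+ - \lambda f^- - (Af)^+ + (Af)^-$, so the four positive elements $f^\pm$, $(Af)^\pm$ are manifestly $\lambda$-independent and no auxiliary reference point $\mu_0$ or two-scale argument is required.
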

\begin{proof}
  Let $f \in D(A)$ and choose $\lambda > \lambda_0$ sufficiently small
  such that $\lambda \in \rho(A)$ and such that $R(\lambda,A)f^+ \ge 0$,
  $R(\lambda,A)f^- \ge 0$, $R(\lambda,A)(Af)^+ \ge 0$ and
  $R(\lambda,A)(Af)^- \ge 0$. We then have
  \begin{displaymath}
    \begin{split}
      f &= R(\lambda,A)(\lambda - A)f \\
      &=\lambda R(\lambda,A)f^+ - \lambda R(\lambda,A)f^- -
      R(\lambda,A)(Af)^+ + R(\lambda,A)(Af)^- \text{,}
    \end{split}
  \end{displaymath}
  which is clearly contained in $D(A)_+ - D(A)_+$.
\end{proof}

One might ask whether $D(A) \cap E_+$ is also generating in $D(A)$ if
$A$ is the generator of an individually eventually positive semigroup,
and one might also wonder whether the resolvent of the generator $A$ of
an individually eventually positive semigroup is always individually
eventually positive at $\spb(A)$. The following example shows that the
answer to both questions is negative, even if the semigroup is assumed
to be uniformly eventually positive.

\begin{example}
  There is a real $C_0$-semigroup $(e^{tA})_{t \ge 0}$ on a complex Banach
  lattice $E$ with the following properties:
  \begin{enumerate}[(a)]
  \item The semigroup $(e^{tA})_{t \ge 0}$ is nilpotent and therefore
    uniformly eventually positive.
  \item The resolvent $R(\phdot,A)$ is not individually eventually
    positive at $\spb(A) = -\infty$.
  \item $D(A)_+ := D(A) \cap E_+$ is not generating in $D(A)$.
  \end{enumerate}
  Indeed, let $p \in [1,\infty)$ and let $E = L^p((0,1)) \oplus
  L^p((0,1))$.  We define a ``sign-flipping left shift semigroup''
  $(e^{tA})_{t \ge 0}$ on $E$ in the following way: For $(f_1,f_2) \in
  E$ we set $e^{tA}(f_1,f_2) = \bigl(g_1(t),g_2(t)\bigr)$, where
  \begin{displaymath}
    g_1(x,t) = 
    \begin{cases}
      f_1(x+t) \quad & \text{if $0\leq x+t \le 1$,} \\
      -f_2(x-1+t) \quad & \text{if $1 < x+t \le 2$,} \\
      0 \quad & \text{if $2 < x+t$,}
    \end{cases}
  \end{displaymath}
  and
  \begin{displaymath}
    g_2(x,t) = 
    \begin{cases}
      f_2(x+t) \quad & \text{if $0\leq x+t \le 1$,} \\
      0 \quad & \text{if $1<x+t$.}
    \end{cases}
  \end{displaymath}
  Clearly this semigroup is nilpotent. Similarly as in
  \cite[Section~A-I-2.6]{Arendt1986}, it can be proved that its 
  generator $A$ is given by
  \begin{displaymath}
    \begin{aligned}
      D(A) & =\bigl\{(f_1,f_2)\in W^{1,p}((0,1))\oplus W^{1,p}((0,1))
      \colon\text{$f_1(1) = - f_2(0)$ and $f_2(1)=0$}\bigr\} \text{,} \\
      A(f_1,f_2) & = (f_1',f_2') \text{.}
    \end{aligned}
  \end{displaymath}
  In particular we have $f_1(1) = f_2(1) = 0$ for each tuple $(f_1,f_2)
  \in D(A)_+$.  Hence, $D(A)_+$ is not generating in $D(A)$. By
  Proposition~\ref{prop:d_a_plus-is-generating} this implies that the
  resolvent $R(\phdot,A)$ is not individually eventually positive at
  $-\infty$.
\end{example}

\paragraph{Acknowledgements}
The authors would like to express their warmest thanks to Wolfgang
Arendt for many enlightening discussions and for contributing several
ideas and proofs.  They would also like to thank Anna Dall'Acqua for
pointing out several references, and the referee for a careful and
thoughtful reading of the manuscript. This paper was initiated during a
very pleasant visit of the first author to Ulm University.

\pdfbookmark[1]{\refname}{refs}

\providecommand{\bysame}{\leavevmode\hbox to3em{\hrulefill}\thinspace}

\end{document}